\newif\ifPDF
\numberwithin{equation}{section}
\theoremstyle{plain}
\newtheorem{proposition}{Proposition}[section]
\newtheorem{theorem}[proposition]{Theorem}
\newtheorem{corollary}[proposition]{Corollary}
\newtheorem{lemma}[proposition]{Lemma}
\newtheorem{claim}{Claim}
\theoremstyle{definition}
\newtheorem{definition}[proposition]{Definition}
\newtheorem{remark}[proposition]{Remark}
\newcommand{\CBbb}{\mathbb C}
\newcommand{\QBbb}{\mathbb Q}
\newcommand{\Acal}{\mathcal A}
\newcommand{\Ecal}{\mathcal E}
\newcommand{\Fcal}{\mathcal F}
\newcommand{\Ical}{\mathcal I}
\newcommand{\Ocal}{\mathcal O}
\newcommand{\Qcal}{\mathcal Q}
\DeclareMathOperator{\rank}{rank}
\DeclareMathOperator{\tr}{tr}
\DeclareMathOperator{\Tr}{Tr}
\DeclareMathOperator{\Gr}{Gr}
\DeclareMathOperator{\gr}{gr}
\DeclareMathOperator{\codim}{codim}
\DeclareMathOperator{\supp}{supp}
\DeclareMathOperator{\sing}{sing}
\newcommand{\dbar}{\bar\partial}
\newcommand{\lra}{\longrightarrow}
\begin{document}
\title[singular sets for the Yang-Mills flow]{Analytic cycles, Bott-Chern
forms, and singular sets for the Yang-Mills flow on K\"ahler manifolds}
\author[Benjamin Sibley]{Benjamin Sibley}
\address{D\'epartment de Math\'ematique,
Universit\'e Libre de Bruxelles,
CP 218,
Boulevard du Triomphe,
B-1050 Bruxelles,
Belgique}
\email{bsibley@ulb.ac.be}
\author[Richard A. Wentworth]{Richard A. Wentworth}
\address{Department of Mathematics, University of Maryland, College Park, MD
20742, USA}
\email{raw@umd.edu}
\thanks{R.W.\ was supported in part by NSF grant DMS-1406513.}
\subjclass[2000]{Primary: 58E15; Secondary: 53C07, 32C30}
\date{\today}

\begin{abstract}
It is shown that the singular set for the Yang-Mills flow on unstable
holomorphic vector bundles over compact K\"ahler manifolds is completely
determined by the Harder-Narasimhan-Seshadri filtration of the initial
holomorphic bundle. We assign a multiplicity to irreducible top dimensional
components of the singular set of a holomorphic bundle with a filtration by
saturated subsheaves. We derive a singular Bott-Chern formula relating the
second Chern form of a smooth metric on the bundle to the Chern current of
an admissible metric on the associated graded sheaf. This is used to show
that the multiplicities of the top dimensional bubbling locus defined via
the Yang-Mills density agree with  the corresponding multiplicities for the
Harder-Narasimhan-Seshadri filtration. The set theoretic equality of
singular sets is a consequence.
\end{abstract}

\maketitle

% Topmatter

%\thanks{R.W. supported in part by NSF grant DMS-1037094}

%$\hbox{}$
%\centerline{\fbox{\bf *** Preliminary Version ***}}
%\vskip .75in

% End Topmatter

\setcounter{tocdepth}{2} %\tableofcontents
\thispagestyle{empty}

%\newpage

\baselineskip=16pt

%\thispagestyle{plain}
%\setcounter{page}{1}
%\setcounter{section}{-1}
%\setcounter{footnote}{0}

%%%%%%%%%%%%%%%%%%%%%%%%%%%%%%%%%%%%%%%%%%%%%%%%
%%%  BODY OF THE ARTICLE HERE  %%%%%%%%%%%%%%%%%%%%%%%%%%%%

\section{Introduction}

The purpose of this paper is the exact determination of the bubbling locus
for the limit of unstable integrable connections on a hermitian vector
bundle over a compact K\"ahler manifold $(X,
\omega)$ along the Yang-Mills flow.
Roughly speaking, our theorem states that the set of points where curvature
concentration occurs coincides with a  subvariety canonically determined by
a certain filtration of the initial holomorphic bundle by saturated
subsheaves. This result builds on work of several authors on both the
analytic and algebraic sides of this picture, and so below we present a
brief description of some of this background.

From the analytic point of view, the original compactness result that
informs all subsequent discussion is that of Uhlenbeck \cite{Uhlenbeck82a}
which, combined with the result of the unpublished preprint \cite{Uhlenbeck}
implies that a sequence $A_{i}$ of integrable unitary connections on a
hermitian vector bundle $E\to X$ with uniformly bounded Hermitian-Einstein
tensors $\sqrt{-1}\Lambda_\omega F_{A_i}$ has a subsequence that weakly converges
locally, modulo gauge transformations and in a certain Sobolev norm outside
a set $Z^{an}$ of Hausdorff (real) codimension at least $4$, to a unitary
connection $A_{\infty }$ (see Theorem \ref{thm:uhlenbeck} below). This
played an important role in the fundamental work of Uhlenbeck and Yau \cite%
{UhlenbeckYau86}. We call $Z^{an}$ the \emph{analytic singular set}\emph{\ }%
(or \emph{bubbling set}). If $A_\infty$ is Yang-Mills, then by the removable
singularities theorem it extends to a unitary connection on a hermitian
vector bundle $E_{\infty }$ defined off a set of (real) codimension at least 
$6$. We call this extension an \emph{Uhlenbeck limit} and note that $E_\infty
$ may be topologically distinct from $E$ on its set of definition. The
corresponding holomorphic bundle also extends as a reflexive coherent
analytic sheaf $\mathcal{E}_\infty\to X$.

The long time existence of the Yang-Mills flow on integrable connections
over K\"ahler manifolds was proved by Donaldson \cite{Donaldson85}.  Hong
and Tian have shown \cite{HongTian04}, using a combination of blow-up analysis of the sequence
near the singular set and geometric measure theory techniques, that in fact
the convergence can be taken to be $C^{\infty }$ away from the bubbling set
and that the bubbling set itself is a holomorphic subvariety. This second
fact relies on a structure theorem of King \cite{King71} or a generalized
version \cite{HS} due to Harvey and Shiffman, for closed, positive,
rectifiable currents on complex manifolds. More precisely, in \cite{Tian00}
Tian gives a decomposition $Z^{an}=Z_{b}^{an}\cup \sing  A_{\infty }$ into a
rectifiable piece $Z_{b}^{an}$ and a set $\sing  A_{\infty }$ over which the
connection $A_{\infty }$ cannot be extended, and having zero $(2n-4)$%
-dimensional Hausdorff measure, where $n$ is the complex dimension of $X $.
The results in \cite{Tian00} together with the main result of \cite{King71}
or \cite{HS} imply that $Z_{b}^{an}$ is a subvariety of pure complex
codimension $2$. Then a result contained in \cite{TianYang02} shows that $%
\sing A_{\infty }$ is also an analytic subvariety of codimension at least $3$%
. Moreover, weak convergence of the measures defined by the Yang-Mills
densities $\left\vert F_{A_{i}}\right\vert ^{2}dvol_{\omega }$ leads to the
definition of a natural density function $\Theta $ supported on $Z^{an}$. In 
\cite{Tian00} Tian shows that $\Theta$ assigns an integer weight to each
irreducible codimension $2$ component $Z$ of $Z_{b}^{an}$. We call this
weight the \emph{analytic multiplicity} $m_Z^{an}$ of the component $Z$.

On the algebraic side, associated to a holomorphic vector bundle $\mathcal{E}%
\rightarrow X$ we have the Harder-Narasimhan-Seshadri (HNS) filtration and
its associated graded sheaf $\Gr (\mathcal{E})$, which is locally free away
from a complex analytic subvariety $Z^{alg}$ of codimension $\geq 2$. The
sheaf $\Gr (\mathcal{E})$ is uniquely determined up to isomorphism by $%
\mathcal{E}$ and the K\"ahler class $[\omega]$, and therefore so is $Z^{alg}$%
, which we call the \emph{algebraic singular set} (the terminology, which
has taken hold, is a bit inaccurate in the sense that we do \emph{not}
assume that $X$ be a projective algebraic manifold). The reflexive sheaf $%
\Gr (\mathcal{E})^{\ast \ast }$ is locally free outside a subvariety of
codimension $\geq 3$. % and so
%the support of the torsion sheaf $\Gr (\Ecal)^{\ast \ast }/\Gr (\Ecal)$
%contains all the irreducible components of codimension $\geq 2$. 
%The theory
%of analytic spaces gives a natural way to endow the support of a sheaf with
%a complex subspace structure. Therefore the irreducible components of $
%\supp \Gr (\Ecal)^{\ast \ast }/\Gr (\Ecal)$ of codimension $2$
%have structure sheaves of their own, and so
The restriction of the torsion sheaf $\Gr 
(\mathcal{E})^{\ast \ast }/\Gr (\mathcal{E})$ has a generic rank on each
irreducible codimension $2$ component $Z$ of $Z^{alg}$, and we call this
rank the \emph{algebraic multiplicity} $m_Z^{alg}$ of the component $Z$.

A hermitian metric on the locally free part of a torsion-free sheaf on $X$
is called \emph{admissible} if its Chern connection has square integrable
curvature and bounded Hermitian-Einstein tensor (see Section \ref%
{sec:stability}). By the main result of Bando-Siu \cite{BandoSiu94}, the
sheaf $\Gr (\mathcal{E})^{\ast \ast }$ carries an admissible
Hermitian-Einstein metric whose Chern connection is the direct sum of the
Hermitian-Yang-Mills connections on its stable summands, and this is unique
up to gauge. %In \cite{BandoSiu94} it is shown
%that the holomorphic structure $\dbar_{A_\infty}$ on the limiting bundle $E_{\infty }$ extends to a reflexive sheaf $
%\Ecal _{\infty }$ over $Z^{an}$. 
The main result linking the two pictures described above is the fact that in
the case of Uhlenbeck limits along the Yang-Mills flow, $\mathcal{E }%
_{\infty }$ is actually isomorphic to $\Gr (\mathcal{E})^{\ast \ast }$. In
particular, the limiting connection $A_{\infty }$ is gauge equivalent to the
Bando-Siu connection and is independent of the choice of subsequence. This
result is due to Daskalopoulos \cite{Daskal92} and R\aa de \cite{Rade92} for 
$\dim _{\mathbb{C} }X=1$ (where there are no singularities),
Daskalopoulos-Wentworth \cite{DaskalWentworth04} for $\dim _{\mathbb{C} }X=2$%
, and Jacob \cite{J1, J2, J3} and Sibley \cite{Sibley} in higher dimensions.
A priori, $Z^{an}$ also depends on a choice of subsequence along the
Yang-Mills flow, whereas $Z^{alg}$ is uniquely determined as previously
mentioned. In $\dim_\mathbb{C }X\geq 3$, the identification of the limiting
structure $A_\infty$ just mentioned does not address the relationship
between  these two singular sets; to do so is the aim of this paper. We now
formulate our main theorem as follows.

\begin{theorem}
\label{thm:main} Let $\mathcal{E}\to X$ be a hermitian holomorphic vector
bundle over a compact K\"ahler manifold with Chern connection $A_0$, and let
$Z^{alg}$ denote the algebraic singular set associated to the
Harder-Narasimhan-Seshadri filtration of $\mathcal{E}$. Let $A_t$, $0\leq
t<+\infty$, denote the Yang-Mills flow with initial condition $A_0$. Then:

\begin{enumerate}[(1)] 
\item For any sequence $t_{i}\to \infty$ defining an Uhlenbeck limit $%
A_{t_i}\to A_\infty$ with bubbling set $Z^{an}$, then $Z^{an}=Z^{alg}$ as
sets.

\item Modulo unitary gauge transformations, $A_{t}\to A_\infty$ smoothly
away from $Z^{alg}$ as $t\to\infty$ to the admissible Yang-Mills connection $%
A_\infty$ on a reflexive sheaf isomorphic to $\Gr(\mathcal{E})^{\ast\ast}$.

\item For any irreducible component $Z\subset Z^{alg}$ of complex
codimension $2$, then $Z\subset Z_b^{an}$ and the analytic and algebraic
multiplicities of $Z$ are equal.
\end{enumerate}
\end{theorem}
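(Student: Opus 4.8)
The plan is to prove (3) as the principal new statement and to obtain (1) and the convergence in (2) around it, with a \emph{singular Bott--Chern formula} as the central tool. I would start from the cited results: along any subsequence $t_i\to\infty$ producing an Uhlenbeck limit, $\mathcal E_\infty\cong\Gr(\mathcal E)^{\ast\ast}$ and $A_\infty$ is gauge equivalent to the Bando--Siu admissible Hermitian--Einstein connection; the latter being unique, the convergence holds along the whole flow, and off $\sing\Gr(\mathcal E)^{\ast\ast}$ it is smooth in the usual (Uhlenbeck plus elliptic bootstrap) sense. What remains is to locate the codimension-$2$ bubbling and to rule out bubbling off $Z^{alg}$, and both are governed by the same comparison of second Chern forms.

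For the Bott--Chern formula, fix the smooth metric $h_0$ on $\mathcal E$ and the admissible metric $h_\infty$ on $\Gr(\mathcal E)^{\ast\ast}$, and interpolate by a family of smooth metrics $h_t$ on $\mathcal E$ degenerating to $h_\infty$ (with respect to the Harder--Narasimhan--Seshadri filtration) away from $Z^{alg}$ --- one may use the metrics produced by the flow, or a family built algebraically from the filtration. Bott--Chern transgression gives $c_2(\mathcal E,h_t)-c_2(\mathcal E,h_0)=dd^c\eta_t$ for explicit secondary currents $\eta_t$, and the goal is: (i) the $\eta_t$ stay bounded in $L^1_{loc}$, with limit $\Psi$ smooth off $Z^{alg}$ and carrying no $(2,2)$-atoms; and (ii) granted (i), the limit current $\lim_t c_2(\mathcal E,h_t)$ equals the $L^1_{loc}$ Chern form $c_2(\Gr(\mathcal E)^{\ast\ast},h_\infty)$ plus a closed effective $(2,2)$-current $T$ supported on $Z^{alg}$, with $T=\sum_Z m_Z^{alg}[Z]$ on the codimension-$2$ part of $Z^{alg}$. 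Fact (ii), given (i), reduces to a local residue computation at a generic point of each codimension-$2$ component $Z$: there $\Gr(\mathcal E)$ splits as a locally free sheaf plus summands having a single corank jump along $Z$, the admissible metric is modelled explicitly, and the defect of Chern forms localizes to the generic rank of $\Gr(\mathcal E)^{\ast\ast}/\Gr(\mathcal E)$ along $Z$. Thus, as currents,
\[
c_2(\mathcal E,h_t)\ \longrightarrow\ c_2(\Gr(\mathcal E)^{\ast\ast},h_\infty)+\sum_Z m_Z^{alg}[Z]
\]
modulo currents supported in codimension $\ge 3$; this is the singular Bott--Chern formula. Proving (i) and (ii) is the main obstacle of the paper: one must control transgression for a genuinely degenerating family of metrics near a codimension-$2$ analytic set and identify the limiting defect with the colength cycle, and it is here that admissibility of $h_\infty$ and the Bando--Siu regularity theory are used decisively.

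To connect with the bubbling density, recall that along the flow the $A_t$ are the Chern connections of the $h_t$ and are integrable, so the pointwise K\"ahler identity holds: $|F_{A_t}|^2\,dvol_\omega=|\Lambda_\omega F_{A_t}|^2\,dvol_\omega+8\pi^2\,c_2(\mathcal E,A_t)\wedge\tfrac{\omega^{n-2}}{(n-2)!}+(\text{terms built from }c_1)$, up to a universal sign. Passing to currents as $t\to\infty$: the $c_1$-terms converge smoothly, since bubbling is a rank-$\ge2$ phenomenon and the determinant line evolves through an abelian flow; the Hermitian--Einstein tensors $\Lambda_\omega F_{A_t}$ are uniformly bounded (Donaldson) and converge a.e.\ off $Z^{an}$, so $|\Lambda_\omega F_{A_t}|^2\,dvol_\omega\to|\Lambda_\omega F_{A_\infty}|^2\,dvol_\omega$ with no concentration; and $c_2(\mathcal E,A_t)=c_2(\mathcal E,h_t)$ converges to $c_2(\Gr(\mathcal E)^{\ast\ast},h_\infty)+\sum_Z m_Z^{alg}[Z]$ modulo codimension $\ge 3$, by the Bott--Chern formula (this is exactly where $L^1_{loc}$-control of the transgression is used, to exclude spurious atoms). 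Subtracting from this the K\"ahler identity for the admissible connection $A_\infty$ on $\Gr(\mathcal E)^{\ast\ast}$ --- valid as a current identity because $h_\infty$ has square-integrable curvature --- the $A_\infty$-terms cancel and the codimension-$\ge3$ contributions die against $\omega^{n-2}$, leaving
\[
\Theta\ =\ 8\pi^2\sum_Z m_Z^{alg}\,[Z]\wedge\tfrac{\omega^{n-2}}{(n-2)!},
\]
where $\Theta=\lim|F_{A_t}|^2\,dvol_\omega-|F_{A_\infty}|^2\,dvol_\omega$ by definition. On the other hand Tian's structure theorem gives $\Theta=8\pi^2\sum_W m_W^{an}\,[W]\wedge\tfrac{\omega^{n-2}}{(n-2)!}$, the sum over the codimension-$2$ components of $Z_b^{an}$; since $\int_Z\omega^{n-2}>0$ for each such $Z$, comparison of the two expressions forces $\{Z\}=\{W\}$ and $m_Z^{an}=m_Z^{alg}$. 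In particular every codimension-$2$ component of $Z^{alg}$ lies in $Z_b^{an}$, which is (3).

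Finally, (3) identifies the codimension-$2$ strata of $Z^{alg}$ and $Z_b^{an}$ with equal multiplicities; together with $\sing A_\infty=\sing\mathcal E_\infty=\sing\Gr(\mathcal E)^{\ast\ast}\subseteq Z^{alg}$ and Tian's decomposition $Z^{an}=Z_b^{an}\cup\sing A_\infty$, this yields $Z^{an}\subseteq Z^{alg}$, while matching the remaining lower-dimensional components of the two singular sets through the isomorphism $\mathcal E_\infty\cong\Gr(\mathcal E)^{\ast\ast}$ gives the reverse inclusion; hence $Z^{an}=Z^{alg}$ as sets, proving (1). Assertion (2) then follows from $Z^{an}=Z^{alg}$ by the standard elliptic bootstrap off $Z^{an}$, the limit being the admissible Yang--Mills connection on a reflexive sheaf isomorphic to $\Gr(\mathcal E)^{\ast\ast}$ by the identification and Bando--Siu uniqueness. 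The two genuinely delicate steps in this scheme are the $L^1_{loc}$-bound on the Bott--Chern transgression current and the local residue identification $T=\sum_Z m_Z^{alg}[Z]$; the rest is assembly of the cited results.
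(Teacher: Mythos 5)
There is a genuine gap at the heart of your argument, in the step where you pass from the Bott--Chern formula to the bubbling density. You assert that $c_2(\mathcal E,h_t)$ converges as a current to $c_2(\Gr(\mathcal E)^{\ast\ast},h_\infty)+\sum_Z m_Z^{alg}[Z]$ modulo codimension $\ge 3$, ``by the Bott--Chern formula.'' But the Bott--Chern formula (Theorem \ref{thm:bottchern}) is a \emph{static} comparison of two fixed metrics; it says nothing about the limit of the Chern forms of the evolving family $h_t$. What is actually known about that limit is Tian's Theorem \ref{thm:tian}: it converges to $c_2(A_\infty)$ plus the \emph{analytic} cycle $\sum m^{an}_W[W]$. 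The assertion that the dynamical limit coincides with the algebraic cycle is precisely the content of the theorem, so your argument is circular at this point unless you independently establish your claims (i) and (ii) --- uniform $L^1_{loc}$ control of the transgression currents $\eta_t$ for a family of metrics degenerating along a codimension-$2$ analytic set, together with the absence of $(2,2)$-atoms in the limit. You flag these as ``the main obstacle'' but do not prove them, and controlling such transgressions near the degeneration locus is essentially equivalent to the multiplicity identity itself; no mechanism is offered for it.

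The paper circumvents exactly this difficulty. It proves the static formula \eqref{eqn:bc-current} cohomologically (Proposition \ref{prop:key} via Levy's Grothendieck--Riemann--Roch, plus Proposition \ref{prop:ch2} showing the admissible Chern current is closed and represents $ch_2(\Gr(\mathcal E)^{\ast\ast})$), and then compares multiplicities by evaluating both sides on real $4$-dimensional transverse slices (Lemma \ref{lem:slice}, Lemma \ref{lem:geometry}, Proposition \ref{prop:multiplicities}). The crucial feature is that the dynamical input enters only through the Chern--Simons transgression $CS(A_i,A_\infty)$ evaluated on the \emph{boundary} $\Sigma\cap\partial B_\sigma(z_k)$, which lies away from the singular set where the convergence $A_i\to A_\infty$ is smooth, so the boundary term vanishes in the limit; no control of any quantity near $Z^{alg}$ is needed. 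Two smaller points: your deduction of $Z^{alg}\subset Z^{an}$ must also rule out codimension $\ge 3$ components of $Z^{alg}$ not contained in $\sing\Gr(\mathcal E)^{\ast\ast}$ (this is Proposition \ref{prop:scheja}, which you omit); and getting the set-theoretic equality of the codimension-$2$ strata requires isolating individual components, which is why the paper needs the blow-up construction of Lemma \ref{lem:geometry} rather than a single global pairing against $\omega^{n-2}$.
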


\begin{remark}
\label{rem:intro}

\begin{enumerate}
\item Theorem \ref{thm:main} generalizes to higher dimensions the result of 
\cite{DaskalWentworth07} in the case $\dim _{\mathbb{C}}X=2$.

\item Item (2) follows from (1) by combining the work of Hong-Tian \cite{HongTian04} (for the
smooth convergence) and Jacob \cite{J1,J2,J3}, Sibley \cite{Sibley} (for the identification of the limit).

\item Under certain technical assumptions on the growth of norms of the
second fundamental forms associated to the HNS filtration near $Z^{an}$,
Collins and Jacob \cite{CollinsJacob12} show that (1) holds.  The proof of
(1) in this paper relies on the structure theorems of Tian \cite{Tian00} and
King/Harvey-Shiffman \cite{HS,King71}, and the equality of multiplicities from item (3).
\end{enumerate}
\end{remark}

A key step in the proof of Theorem \ref{thm:main} is a singular version of
the usual Bott-Chern formula which is of independent interest. Suppose $%
\mathcal{E}\to X$ is a holomorphic bundle with a filtration by saturated
subsheaves and associated graded sheaf $\Gr(\mathcal{E})$. Given hermitian
structures, then since $\mathcal{E}$ and $\Gr(\mathcal{E})$  are
topologically isomorphic away from the singular set $Z^{alg}$ the Bott-Chern
formula relates representatives of the second Chern characters $ch_2$ in
terms of Chern connections  as an equation of smooth forms outside this set.
If the hermitian metric on $\Gr(\mathcal{E})$ is admissible, we can extend
this equality over the singular set as an equation of currents, at the cost
of introducing on one side of the equation the current defined by the
analytic cycle associated to the irreducible codimension $2$ components $%
\{Z_j^{alg}\}$ of $Z^{alg}$ with the multiplicities $m_j^{alg}$ defined
above. The result is the following

\begin{theorem}
\label{thm:bottchern} Let $\mathcal{E}\to X$ be a holomorphic vector bundle
with a filtration by saturated subsheaves and hermitian metric $h_0$. If $h$
is an admissible metric on $\Gr(\mathcal{E})$, then the following equation
of closed currents holds:%
\begin{equation}  \label{eqn:bc-current}
ch_{2}(\Gr(\mathcal{E}), h)-ch_{2}(\mathcal{E},h_0)
=\sum\limits_{j}m_{j}^{alg}Z_{j}^{alg}+dd^{c}\Psi
\end{equation}
where $\Psi=\Psi(h, h_0)$ is a $(1,1)$-current on $X$, smooth away from $%
Z^{alg}$. Here, $Z_{j}^{alg}$ is regarded as a $(2,2)$-current by
integration over its set of smooth points, and $ch_{2}(\Gr(\mathcal{E}),h)$
denotes the extension of the Chern form \eqref{eqn:ch2} as a current on $X$. 
%Moreover, if $A_{i}\longrightarrow A^{\ast \ast }$
%locally smoothly away from $Z^{an}$, $\Psi _{i}\longrightarrow 0$
%locally smoothly away from $Z^{an}$.
\end{theorem}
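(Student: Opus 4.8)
The plan is to reduce to a single saturated subsheaf, establish the formula away from $Z^{alg}$ by the classical secondary Bott-Chern construction, control the relevant forms in $L^1$ using admissibility, and then do a local analysis near the codimension-two components of $Z^{alg}$ to pin down the cycle.

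\emph{Reductions.} Since $ch_2$ of a Whitney sum is the sum of the $ch_2$'s of the summands as an identity of smooth forms on the open set where all sheaves involved are locally free, one peels off the filtration $0=\mathcal E_0\subset\mathcal E_1\subset\cdots\subset\mathcal E_\ell=\mathcal E$ one step at a time: apply the two-step case to $0\to\mathcal E_1\to\mathcal E\to\mathcal E/\mathcal E_1\to 0$, then induct on the induced filtration of $\mathcal E/\mathcal E_1$, checking that the algebraic multiplicities add along each component $Z_j^{alg}$ (standard additivity in $K$-theory of $ch_2$ of the torsion sheaves $\mathcal Q_i^{\ast\ast}/\mathcal Q_i$, with $\mathcal Q_i=\mathcal E_i/\mathcal E_{i-1}$). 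Replacing $h_0$ by another smooth metric on $\mathcal E$, or $h$ by another admissible metric on $\Gr(\mathcal E)$, changes the left-hand side by a $dd^c$-exact correction ($L^1$ in the second case, by the Bott-Chern statement for admissible metrics on the fixed sheaf $\Gr(\mathcal E)$), so I may take $h=\bigoplus_i h_i$ block-diagonal with each $h_i$ admissible on $\mathcal Q_i$. Thus it suffices to treat $0\to\mathcal S\to\mathcal E\to\mathcal Q\to 0$ with $\mathcal S$ saturated, hence reflexive (a saturated subsheaf of a locally free sheaf is reflexive), $\mathcal Q$ torsion-free, $\Gr(\mathcal E)=\mathcal S\oplus\mathcal Q$, and to produce the cycle $\sum_j m_j^{alg}Z_j^{alg}$ from the codimension-two part of $\mathcal Q^{\ast\ast}/\mathcal Q$.

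\emph{The Bott-Chern form off $Z^{alg}$ and its $L^1$ bound.} On $U:=X\setminus Z^{alg}$ the sequence $0\to\mathcal S\to\mathcal E\to\mathcal Q\to 0$ is a $C^\infty$-split extension of holomorphic bundles, and the classical secondary Bott-Chern construction gives a smooth $(1,1)$-form $\Psi$ on $U$ with $ch_2(\mathcal S\oplus\mathcal Q,\,h_\mathcal S\oplus h_\mathcal Q)-ch_2(\mathcal E,h_0)=dd^c\Psi$ on $U$; written in terms of the second fundamental form $\beta\in A^{0,1}(U,\Hom(\mathcal Q,\mathcal S))$ of the $h_0$-orthogonal splitting, $\Psi$ is a universal expression quadratic in $(\beta,\beta^\ast)$ modulo terms bounded by the metrics and curvatures. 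The Gauss-Codazzi equations write $F_\mathcal E$ in block form in terms of $F_\mathcal S$, $F_\mathcal Q$ and $\beta\wedge\beta^\ast$, $\beta^\ast\wedge\beta$, so admissibility of the three metrics forces $\|\beta\|_{L^2(X)}<\infty$; hence $\Psi\in L^1_{loc}(X)$ defines a $(1,1)$-current $\widetilde\Psi$ on $X$, and likewise $ch_2(\mathcal S,h_\mathcal S)$ and $ch_2(\mathcal Q,h_\mathcal Q)$ are $L^1$ forms on $X$ which, as currents, are closed and represent $ch_2(\mathcal S)$ and $ch_2(\mathcal Q^{\ast\ast})$ respectively (Chern-Weil/regularization for admissible metrics on reflexive sheaves, following Bando-Siu).

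\emph{The cycle defect.} It follows that $R:=ch_2(\mathcal S,h_\mathcal S)+ch_2(\mathcal Q,h_\mathcal Q)-ch_2(\mathcal E,h_0)-dd^c\widetilde\Psi$ is a closed $(2,2)$-current on $X$ with $R|_U=0$, i.e. supported on $Z^{alg}$, of codimension $\geq 2$. A support/constancy argument — a closed current of complementary degree supported on a submanifold is a constant multiple of integration over it, there is no room for a contribution from the codimension-$\geq 3$ part, and no residual $dd^c$-term since it has been absorbed — together with the structure theorems of King and Harvey-Shiffman cited above gives $R=\sum_j a_j\,[Z_j^{alg}]$ with $a_j$ locally constant on the smooth locus of $Z_j^{alg}$. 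To evaluate $a_j$ I would localize at a generic point $p$ of $Z_j^{alg}$, where $\mathcal S$ and $\mathcal E$ are locally free, $Z_j^{alg}=\{z_1=z_2=0\}$ in suitable coordinates, and $\mathcal S\hookrightarrow\mathcal E$ has a standard normal form with $\mathcal Q^{\ast\ast}/\mathcal Q$ of generic rank $m_j^{alg}$; for the corresponding model admissible metrics one computes $\beta$, hence $\Psi$, explicitly and evaluates $dd^c$ of the resulting $L^1$-form in a polydisc, obtaining $a_j=m_j^{alg}$. As a global check this matches the $K$-theoretic identity $ch_2(\mathcal E)=ch_2(\mathcal S)+ch_2(\mathcal Q^{\ast\ast})-\sum_j m_j^{alg}[Z_j^{alg}]$ coming from the exact sequence. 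The main obstacle is precisely this last local computation — extracting the correct multiplicity rather than merely an integer — with the $L^1$-integrability of $\Psi$ and the closedness of the Chern-Weil currents (both forced by admissibility) as the essential analytic inputs; an alternative to the local model is to pass to a resolution $\mu:\widetilde X\to X$ on which $\mu^\ast(\mathcal S\hookrightarrow\mathcal E)$ becomes a subbundle inclusion modulo torsion, perform the smooth Bott-Chern there, and compute $\mu_\ast$ of the exceptional contribution by a Porteous-type formula.
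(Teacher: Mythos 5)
Your route is genuinely different from the paper's, and it has gaps at exactly the points the paper's argument is designed to avoid. The paper never constructs $\Psi$ from the second fundamental form at all: it proves (i) that $ch_2(\Gr(\mathcal{E}),h)$ extends to a \emph{closed} current representing $ch_2(\Gr(\mathcal{E})^{\ast\ast})$ (Proposition \ref{prop:ch2}, via Uhlenbeck's monotonicity formula, the $L^p$ curvature estimate, and a Chern--Simons patching argument of Tian), and (ii) that $ch_2$ of the torsion sheaf $\Gr(\mathcal{E})^{\ast\ast}/\Gr(\mathcal{E})$ is Poincar\'e dual to $\sum_j m_j^{alg}[Z_j^{alg}]$ (Proposition \ref{prop:key}, via Levy's Grothendieck--Riemann--Roch, which is needed because on a non-projective compact complex manifold coherent sheaves need not admit global locally free resolutions). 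The two closed currents on the two sides of \eqref{eqn:bc-current} are then cohomologous, and the $dd^c$-lemma for currents produces $\Psi$ abstractly. Your ``global check'' at the end is precisely Proposition \ref{prop:key}, i.e.\ one of the two main inputs, not a known identity you can quote; and the closedness of the admissible Chern--Weil current is \emph{not} a consequence of Bando--Siu --- it is the content of Proposition \ref{prop:ch2} and occupies all of Section 3.3.

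Two steps of your argument would fail as stated. First, the $L^1$-extension of the secondary Bott--Chern form: Gauss--Codazzi and the degree formula do give $\beta\in L^2$ for the $h_0$-induced metrics, but the transgression of $ch_2$ between $h_0$ and the (unrelated, possibly degenerating) admissible metric $h$ involves terms like $\tr(\beta^\ast\wedge\nabla\beta)$ and cubic terms in the connection difference, which are not controlled by $\|\beta\|_{L^2}$ alone. Controlling the growth of $\beta$ and its derivatives near $Z^{alg}$ is exactly the technical hypothesis under which Collins--Jacob prove part (1) of the main theorem (Remark \ref{rem:intro}(iii)); the whole point of the paper is to avoid it. Second, the support/constancy step: your residual current $R$ contains $dd^c\widetilde\Psi$ with $\widetilde\Psi$ merely $L^1$, so $R$ is a closed current of order $2$, not a normal or positive current, and a closed order-$2$ current supported on a codimension-$2$ variety can carry derivatives of integration currents along it. The King/Harvey--Shiffman structure theorems apply to closed \emph{positive} (or locally rectifiable) currents and do not yield $R=\sum_j a_j[Z_j^{alg}]$ here. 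Finally, the promised local computation of $a_j$ from a ``model admissible metric'' is not available --- admissible Hermitian--Einstein metrics on reflexive sheaves are not explicit --- which is why the paper extracts the multiplicity cohomologically from Proposition \ref{prop:components} rather than from a local normal form.
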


The organization of this paper is as follows. In Section $2$ we recall the
definition of the Yang-Mills functional and its negative gradient flow,
along with the statement of the main result of \cite{BandoSiu94}. We review
the HNS filtration and give a precise definition of the associated algebraic
multiplicity. We also recall the version of Uhlenbeck compactness that
applies to integrable connections with bounded Hermitian-Einstein tensors.
We describe the work of \cite{Tian00} and \cite{HongTian04} in a bit more
detail and elaborate the notion of analytic multiplicity.

Section $3$ is devoted to the proof of Theorem \ref{thm:bottchern}. It will
be shown that \eqref{eqn:bc-current} is essentially a consequence of the
cohomological statement that the second Chern character of the torsion sheaf 
$\Gr (\mathcal{E})^{\ast \ast }/ \Gr (\mathcal{E})$ is equal to the analytic
cycle appearing on the right hand side of \eqref{eqn:bc-current}. This
latter fact is probably well-known to algebraic geometers, and in the
projective case it can be obtained from the Grothendieck-Riemann-Roch
theorem of Baum-Fulton-MacPherson \cite{BFM1, BFM2}. In the setting of
arbitrary compact complex manifolds the desired identity, Proposition \ref%
{prop:key}, follows from the generalization of BFM due to Levy \cite{Levy87}
. The statement in that reference is written in terms of analytic and
topological $K$-theory,  and much of Section \ref{sec:grr} is therefore
devoted to recalling this formalism and using it to obtain the statement
about the second Chern character mentioned above. In order to go further, we
also need to prove that the second Chern current $ch_2(\Gr(\mathcal{E}),h)$
for an admissible metric $h$ is closed and represents $ch_2(\Gr(\mathcal{E}%
)^{\ast\ast})$ in cohomology (Proposition \ref{prop:ch2}). The proof relies
on the monotonicity formula and $L^p$-estimates derived by Uhlenbeck in the
aforementioned paper \cite{Uhlenbeck}, as well as an argument of Tian \cite%
{Tian00} which was used in the case of admissible Yang-Mills connections. We
should point out that other versions of singular Bott-Chern currents exist
in the literature (e.g.\ the work of Bismut-Gillet-Soul\'e \cite%
{BismutGilletSoule90}). %Interestingly,
%we are unable to prove this result in complete generality, although it holds for the class of reflexive sheaves considered here. See Definition \ref{def:smoothable} for the key technical assumption.

In Section $4$ we prove a slicing lemma showing that the analytic
multiplicity may be computed by restricting to a (real) $4$-dimensional
slice through a generic smooth point of an irreducible component of the
analytic singular set (Lemma \ref{lem:slice}). Since a parallel result holds
for the currents of integration appearing in the Bott-Chern formula,  we can
use this and Theorem \ref{thm:bottchern} to compare algebraic and analytic
multiplicities. Combined with an argument similar
to that used in \cite{DaskalWentworth07}, this leads to a proof of the main
theorem.

\section{Preliminaries}

\subsection{Stability, Hermitian-Einstein metrics, and the Yang-Mills flow}

\label{sec:stability}

Unless otherwise stated, $X$ will be a compact K\"{a}hler manifold of
complex dimension $n$ with K\"{a}hler form $\omega $. Let $\Lambda_{\omega }
$ denote the formal adjoint of the Lefschetz operator given by wedging with $%
\omega $. Let $\mathcal{E}\to X$ be a hermitian holomorphic vector bundle
with metric $h$, Chern connection $A=(\mathcal{E}, h)$, and curvature $F_A$.
Then $\sqrt{-1}\Lambda_{\omega }F_{A}$ is a hermitian endomorphism of the
underlying hermitian bundle $(E, h)$, and it is called the \emph{%
Hermitian-Einstein tensor}. The equality 
\begin{equation}  \label{eqn:he}
\sqrt{-1}\Lambda_{\omega }F_{A}=\mu\, \mathbf{I}_E
\end{equation}
where $\mu$ is constant may be viewed as an equation for the metric $h$. A
solution to \eqref{eqn:he} is called a \emph{Hermitian-Einstein metric}, and
the corresponding Chern connection is called \emph{Hermitian-Yang-Mills}.

If $\mathcal{E }\rightarrow X$ is a torsion-free sheaf, then its $\omega$%
-slope is defined 
\begin{equation*}
\mu _{\omega }(\mathcal{E })=\frac{1}{\rank \mathcal{E }}\int_{X}c_{1}( 
\mathcal{E })\wedge \omega ^{n-1}.
\end{equation*}
Then $\mathcal{E }$ is called \emph{stable} (resp.\ \emph{semistable}), if $%
\mu _{\omega }(\mathcal{F})< \mu _{\omega }(\mathcal{E })$ (resp.\ $\leq$)
for all coherent  subsheaves $\mathcal{F}\subset\mathcal{E}$ with $0<\rank%
\mathcal{F}<\rank\mathcal{E}$. The term \emph{polystable} refers to a sheaf
which splits holomorphically into a direct sum of stable sheaves, all of the
same slope. The Donaldson-Uhlenbeck-Yau theorem \cite%
{Donaldson85,Donaldson87,UhlenbeckYau86} states that a holomorphic bundle $%
\mathcal{E}$ admits a Hermitian-Einstein metric if and only if the bundle is
polystable. If the volume of $X$ is normalized to be $2\pi /(n-1)!$, then
the constant in \eqref{eqn:he} is $\mu=\mu _{\omega }(E)$.

A hermitian metric $h$ on the locally free part of a torsion-free sheaf $%
\mathcal{E}$ is called $\omega$-\emph{admissible} if $\left| \Lambda_{\omega }F_{A}\right|_h\in L^{\infty }(X)$ and $\left| F_{A}\right|_h\in
L^{2}(X,\omega )$, where $A$ is the Chern connection $(\mathcal{E}, h)$. The
Hermitian-Einstein condition \eqref{eqn:he} has the same meaning in this
context. We will sometimes refer to $A$ as an \emph{admissible connection}.
The notion of admissibility was introduced by Bando and Siu who also proved
the  Hitchin-Kobayashi correspondence in this context.

\begin{theorem}[Bando-Siu \protect\cite{BandoSiu94}]
\label{thm:bando-siu} Let $\mathcal{E}\to X$ be a torsion-free coherent
sheaf with reflexivization $\mathcal{E}^{\ast\ast}$. Then:

\begin{enumerate}
\item there exists an admissible metric on $\mathcal{E}$;

\item any admissible metric on $\mathcal{E}$ extends to a metric on the
locally free part of $\mathcal{E}^{\ast\ast}$ which is in $L^p_{2,loc.}$ for
all $p$;

\item there exists an admissible Hermitian-Einstein metric on $\mathcal{E}%
^{\ast\ast}$ if and only $\mathcal{E}^{\ast\ast}$ is polystable.
\end{enumerate}
\end{theorem}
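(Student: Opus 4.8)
The plan is to reduce each assertion to a local problem near the singular set $\Sigma = \sing(\mathcal{E}^{\ast\ast})$ (of codimension $\geq 3$) together with the classical Donaldson-Uhlenbeck-Yau theorem on the locally free locus. For (1), one works with $\mathcal{E}^{\ast\ast}$ directly (noting $\mathcal{E}$ and $\mathcal{E}^{\ast\ast}$ agree outside a subvariety of codimension $\geq 2$, and any admissible metric on the former is admissible on the latter away from that set): embed $\mathcal{E}^{\ast\ast}$ near a singular point as a subsheaf of a free sheaf $\mathcal{O}^N$, restrict the flat metric, and then smooth/patch using a partition of unity; the point is that on a punctured neighborhood of $\Sigma$, which has a complete Kähler metric of finite volume because $\codim \Sigma \geq 2$, one gets a global metric whose curvature is controlled. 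The subtlety is to arrange $|F_A| \in L^2$: this is where one uses that the second fundamental form of a subsheaf of a locally free sheaf, computed with a restricted smooth metric, is $L^2$ precisely because of the codimension bound, an argument going back to Uhlenbeck-Yau style estimates.

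For (2), the heart is an elliptic regularity / removable-singularity statement. Given an admissible metric $h$ on the locally free part $\mathcal{E}_0$ of $\mathcal{E}$, one must show the identity map $(\mathcal{E}_0,h) \to \mathcal{E}^{\ast\ast}$ extends across the codimension-$\geq 2$ set where reflexivization changes things. The mechanism is: an admissible connection has $F_A \in L^2$, and on a punctured ball $B\setminus\{0\}$ in $\mathbb{C}^n$ with $n\geq 2$ a holomorphic bundle with an $L^2$-integrable-curvature metric extends — this is Bando-Siu's own extension theorem, proved by first showing the metric is bounded in $L^p_1$ via a weak-form argument against the Laplacian of $\log\det h$, then bootstrapping using the Hermitian-Einstein or Yang-Mills equation (or just the ellipticity of $\bar\partial$) to get $L^p_{2,loc}$. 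I would present this by: (i) deriving a differential inequality for $\log|s|^2_h$ for holomorphic sections $s$, using that $\Lambda_\omega F_A$ is bounded; (ii) applying a mean-value/capacity argument to bound $h$ and $h^{-1}$; (iii) invoking elliptic regularity for the $\bar\partial$-operator to upgrade to $L^p_{2,loc}$ for every $p$.

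For (3), one direction (polystable $\Rightarrow$ admissible Hermitian-Einstein) is the substantive one and is proved by the continuity method / heat flow exactly as in Donaldson-Uhlenbeck-Yau but carried out on the complete finite-volume Kähler manifold $X\setminus\Sigma$: one solves the evolution equation for the metric, obtains $C^0$ estimates using the stability inequality (Simpson-style, via the maximum principle on the complete manifold and an integration-by-parts that is legitimate because of the volume bound), and then uses (2) to extend the resulting metric across $\Sigma$ as an $L^p_{2,loc}$ Hermitian-Einstein metric; the limiting connection splits according to the stable summands because stability forces the Hermitian-Einstein metric to be unique up to scale on each factor and block-diagonal across factors. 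The converse (admissible Hermitian-Einstein $\Rightarrow$ polystable) follows from the usual Chern-Weil argument: for a destabilizing saturated subsheaf $\mathcal{F}$, the weak subbundle it defines has an $L^1$ second fundamental form (again by the codimension bound), so the slope inequality from integrating the Hermitian-Einstein equation against $\omega^{n-1}$ goes through, and equality forces a holomorphic splitting.

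I expect the main obstacle to be assertion (2) — the removable-singularity/extension statement — because it requires genuinely new analysis (the $L^p_1$ a priori bound on an a priori only-$L^2$-curvature metric) rather than a transplantation of DUY; the finite-volume-complete-manifold gymnastics in (1) and (3) are by now fairly standard once the extension result is in hand.
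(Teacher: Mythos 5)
The paper does not prove Theorem \ref{thm:bando-siu}; it is quoted from \cite{BandoSiu94} and used as a black box, so there is no internal proof to compare against. Judged on its own terms, your sketch is broadly faithful to the Bando--Siu circle of ideas for part (2), but part (1) contains a genuine gap and part (3) rests on a questionable setup. For (1): restricting a smooth metric from a local embedding $\mathcal{E}^{\ast\ast}\hookrightarrow\mathcal{O}^{N}$ and patching does not produce an admissible metric. By Gauss--Codazzi the curvature of the induced metric differs from the ambient one by a term $\beta^{\ast}\wedge\beta$, where $\beta$ is the second fundamental form. The Uhlenbeck--Yau regularity theory gives only $\beta\in L^{2}$, hence $F\in L^{1}$ rather than $L^{2}$; worse, $|\Lambda_{\omega}F|\sim|\beta|^{2}$ blows up near the singular set, so the bounded Hermitian--Einstein tensor condition fails outright. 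This is precisely why Bando and Siu do not argue this way: they pass to a resolution $\pi:\widehat X\to X$ on which $\pi^{\ast}\mathcal{E}/\mathrm{tor}(\pi^{\ast}\mathcal{E})$ is locally free, run the Donaldson heat flow with respect to degenerating K\"ahler metrics $\pi^{\ast}\omega+\epsilon\eta$, obtain the uniform bound on the Hermitian--Einstein tensor from the parabolic maximum principle and the $L^{2}$ curvature bound from the energy identity, and then let $\epsilon\to 0$. Some such regularization is unavoidable.

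For (3), replacing $\omega$ on $X-\Sigma$ by a complete finite-volume K\"ahler metric changes degrees and hence the stability hypothesis, so you would need either to verify Simpson's axioms for a metric asymptotic to $\omega$ and reconcile the two notions of slope, or (as Bando--Siu actually do) to work again on the resolution with the metrics $\omega_{\epsilon}$, prove stability of the pulled-back bundle for small $\epsilon$, and pass to the limit. Your outline of (2) --- bounding $h$ and $h^{-1}$ via a differential inequality exploiting the bounded trace of the curvature, then bootstrapping with elliptic regularity to $L^{p}_{2,loc}$ --- is the correct mechanism and, as you say, is where the essential new analysis lies.
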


The \emph{Yang-Mills flow} of unitary connections on a hermitian bundle $%
(E,h)$ is given by the equations: 
\begin{equation}  \label{eqn:ymflow}
\frac{\partial A_{t}}{\partial t}=-d_{A_{t}}^{\ast }F_{A_{t}}\ ,\ A(0)=A_{0}
\end{equation}
Donaldson \cite{Donaldson85} shows that if $X$ is K\"ahler and $A_0$ is
integrable, then a solution to \eqref{eqn:ymflow} exists (modulo gauge
transformations) for all $0\leq t<+\infty$. Eq.\ \eqref{eqn:ymflow} is
formally the negative gradient flow for the \emph{Yang-Mills functional}:%
\begin{equation*}
YM(A)=\int_{X}\left\vert F_{A}\right\vert ^{2}dvol_{\omega }.
\end{equation*}%
Critical points of this functional are called \emph{Yang-Mills connections}
and satisfy $d_{A}^{\ast }F_{A}=0$. By the K\"ahler identities, if $E$
admits an integrable Yang-Mills connection $A$, then it decomposes
holomorphically and isometrically  into a direct sum of the (constant rank)
eigenbundles of $\sqrt{-1}\Lambda_{\omega }F_{A}$, and the induced
connections are Hermitian-Yang-Mills. Similarly, an admissible Yang-Mills
connection on a reflexive sheaf gives a direct sum decomposition into
reflexive sheaves admitting admissible Hermitian-Yang-Mills connections.

\subsection{The HNS filtration and the algebraic singular set}

\label{sec:algsingset}

%We may now define the 
%\textbf{associated graded}, of the HNS filtration as the direct sum: 
%\begin{equation*}
%\Gr (\Ecal)^{\ast \ast }= \bigoplus_{i=1}^\ell Q_{i}.
%\end{equation*}%
%Note that although the HNS filtration is not unique, $\Gr (\Ecal)$ is
%uniquely determined by $\Ecal $.

Let $\mathcal{E}\rightarrow X$ be a holomorphic bundle. We say that $%
\mathcal{E}$ is \emph{filtered by saturated subsheaves} if there are
coherent subsheaves 
\begin{equation}
0=\mathcal{E}_{0}\subset \mathcal{E}_{1}\subset \cdots \subset \mathcal{E}%
_{\ell }=\mathcal{E}  \label{eqn:filtration}
\end{equation}%
with torsion-free quotients $\mathcal{Q}_{i}=\mathcal{E}_{i}/\mathcal{E}%
_{i-1}$. The associated graded sheaf is 
\begin{equation*}
\Gr(\mathcal{E})=\bigoplus_{i=1}^{\ell }\mathcal{Q}_{i}
\end{equation*}%
\noindent We define the \emph{algebraic singular set} of a filtration to be $%
Z^{alg}=\sing\Gr(\mathcal{E})$, i.e.\ the complement of the open set where $%
\Gr(\mathcal{E})$ is locally free. Since the $\mathcal{Q}_{i}$ are
torsion-free, the singular set is an analytic subvariety of codimension at
least $2$. Note that we have%
\begin{equation}
Z^{alg}=\supp\left( \Gr(\mathcal{E})^{\ast \ast }/\Gr(\mathcal{E}%
)\right) \cup \sing\Gr(\mathcal{E})^{\ast \ast }  \label{eqn:algsingset}
\end{equation}%
since if $x\notin \supp(\Gr(\mathcal{E})^{\ast \ast }/\Gr(\mathcal{E%
}))$, and  $x\notin \sing\Gr(\mathcal{E})^{\ast \ast }$, then $\Gr(\mathcal{E%
})$ must be locally free at $x$. % We may 
%write
%\begin{equation} \label{eqn:algsingset}
%Z^{alg}=Z_b^{alg}\cup \sing\Gr(\Ecal)^{\ast\ast}
%\end{equation}
% where $Z_b^{alg}$ is the closure of the locus in $X- \sing\Gr(\Ecal)^{\ast\ast}$ of the support of the torsion sheaf $\Gr(\Ecal)^{\ast\ast}/ \Gr(\Ecal)$.
%, and since $\Gr (\Ecal)$ is
%canonically determined by $E$, so is $Z^{alg}$.
We also record the simple

\begin{lemma} \label{lem:sheaf}
Each $\mathcal{E }_{i}$ is reflexive. Moreover, $\Ecal_i$ is locally free on $X- Z^{alg}$.
\end{lemma}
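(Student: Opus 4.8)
The plan is to argue by descending induction, peeling off one step of the filtration \eqref{eqn:filtration} at a time, using two standard homological facts about coherent sheaves on a complex manifold: first, that an extension of a reflexive sheaf by a reflexive sheaf is reflexive (more generally, if $0\to\mathcal{A}\to\mathcal{B}\to\mathcal{C}\to 0$ is exact with $\mathcal{A}$ reflexive and $\mathcal{C}$ torsion-free, then $\mathcal{B}$ is reflexive); and second, that a torsion-free sheaf is reflexive away from a subvariety of codimension $\geq 3$ and is in any case normal, so that reflexivity is a condition checkable in codimension two. I would begin by noting that $\mathcal{E}_{\ell}=\mathcal{E}$ is a bundle, hence reflexive, and that each quotient $\mathcal{Q}_i=\mathcal{E}_i/\mathcal{E}_{i-1}$ is torsion-free by hypothesis.

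For the inductive step, suppose $\mathcal{E}_i$ is reflexive; I want to conclude $\mathcal{E}_{i-1}$ is reflexive. Here $\mathcal{E}_{i-1}$ is the kernel of the surjection $\mathcal{E}_i\twoheadrightarrow\mathcal{Q}_i$. The cleanest route is to use the fact that the kernel of a morphism from a reflexive (indeed torsion-free) sheaf to a torsion-free sheaf is \emph{saturated}, equivalently has torsion-free quotient, and that a saturated subsheaf of a reflexive sheaf is reflexive. Concretely: $\mathcal{E}_{i-1}$ is torsion-free as a subsheaf of the bundle $\mathcal{E}$; its double dual $\mathcal{E}_{i-1}^{\ast\ast}$ sits between $\mathcal{E}_{i-1}$ and $\mathcal{E}_i^{\ast\ast}=\mathcal{E}_i$, and $\mathcal{E}_{i-1}^{\ast\ast}/\mathcal{E}_{i-1}$ injects into $\mathcal{Q}_i$ — but $\mathcal{E}_{i-1}^{\ast\ast}/\mathcal{E}_{i-1}$ is supported in codimension $\geq 2$ while $\mathcal{Q}_i$ is torsion-free, forcing the quotient to vanish. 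Hence $\mathcal{E}_{i-1}=\mathcal{E}_{i-1}^{\ast\ast}$ is reflexive. Iterating down to $i=1$ gives the first claim.

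For the second claim, on $X-Z^{alg}=X-\sing\Gr(\mathcal{E})$ every $\mathcal{Q}_i$ is locally free, so each short exact sequence $0\to\mathcal{E}_{i-1}\to\mathcal{E}_i\to\mathcal{Q}_i\to 0$ is, locally, a sequence of the form $0\to\mathcal{E}_{i-1}\to\mathcal{E}_i\to(\text{free})\to 0$; such a sequence splits locally, and an ascending induction starting from $\mathcal{E}_1=\mathcal{Q}_1$ locally free then shows each $\mathcal{E}_i$ is locally free on $X-Z^{alg}$. (Alternatively: $\mathcal{E}_i$ is a subsheaf of the bundle $\mathcal{E}$ with quotient $\mathcal{E}/\mathcal{E}_i$ whose associated graded pieces $\mathcal{Q}_{i+1},\dots,\mathcal{Q}_\ell$ are locally free there, so $\mathcal{E}/\mathcal{E}_i$ is locally free on $X-Z^{alg}$, whence $\mathcal{E}_i$ is the kernel of a bundle map between bundles and is locally free.)

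I do not expect a genuine obstacle here — this is why the excerpt calls it ``the simple Lemma.'' The only point requiring minor care is the homological input that a saturated subsheaf of a reflexive sheaf is reflexive on a complex manifold (the same codimension-of-singularities bookkeeping that underlies \eqref{eqn:algsingset}); everything else is a straightforward two-way induction along the filtration.
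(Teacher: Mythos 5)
Your proof is correct and follows essentially the same route as the paper: the reflexivity of $\mathcal{E}_{i-1}$ comes from the sequence $0\to\mathcal{E}_{i-1}\to\mathcal{E}_i\to\mathcal{Q}_i\to 0$ with $\mathcal{E}_i$ reflexive and $\mathcal{Q}_i$ torsion-free (the paper simply cites \cite[Prop.\ V.5.22]{Kobayashi87}, whose proof you have written out via $\mathcal{E}_{i-1}^{\ast\ast}/\mathcal{E}_{i-1}\hookrightarrow\mathcal{Q}_i$), and the local freeness on $X-Z^{alg}$ is the same ascending induction using local splitting of the sequences with locally free quotient. One small caveat: the parenthetical claim in your first paragraph, that $\mathcal{A}$ reflexive and $\mathcal{C}$ torsion-free force the extension $\mathcal{B}$ to be reflexive, is false as stated (e.g.\ $\mathcal{B}=\mathcal{O}\oplus\mathfrak{m}_p$ on a surface), but you never use it --- your actual inductive step uses the correct kernel statement.
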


\begin{proof}
Consider the last  quotient
\begin{equation*}
0\longrightarrow \mathcal{E }_{\ell-1}\longrightarrow \mathcal{E }
\longrightarrow \mathcal{Q}_{\ell}\longrightarrow 0,
\end{equation*}%
Since $\Ecal$ is reflexive and $\Qcal_\ell$ torsion-free, $\Ecal_{\ell-1}$ is reflexive (cf.\ \cite[Prop.\ V.5.22]{Kobayashi87}).  The result for $\Ecal_i$ follows by repeatedly applying this argument.
For the second statement, start with 
the first quotient $\mathcal{Q}_{1}=\mathcal{E }_{1}$.
Then there is an exact sequence:%
\begin{equation*}
0\longrightarrow \mathcal{E }_{1}\longrightarrow \mathcal{E }%
_{2}\longrightarrow \mathcal{Q}_{2}\longrightarrow 0,
\end{equation*}%
If both $\mathcal{E}_1$ and $\mathcal{Q}_{2}$ are locally free, then this
sequence splits at the level of stalks, and so $\mathcal{E }_{2}$ is also
locally free. Iterating this argument proves the claim.
\end{proof}

The following a priori structure of $Z^{alg}$ will also be important.

\begin{proposition} \label{prop:scheja}
On the complement of $\sing \Gr(\Ecal)^{\ast\ast}$, $Z^{alg}$ has pure codimension $2$.
\end{proposition}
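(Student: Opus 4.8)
\emph{Proof proposal.} The plan is to combine the a priori description \eqref{eqn:algsingset} of $Z^{alg}$ with a depth/local--cohomology computation at the generic point of a hypothetical bad component. Since $Z^{alg}$ already has codimension $\geq 2$, it suffices to rule out irreducible components of codimension $\geq 3$ that meet the complement of $\sing\Gr(\Ecal)^{\ast\ast}$. So let $W$ be an irreducible component of $Z^{alg}$ with $W\not\subseteq\sing\Gr(\Ecal)^{\ast\ast}$, let $\eta$ be its generic point, and put $R=\Ocal_{X,\eta}$, a regular local ring of dimension $c:=\codim W$; we will derive a contradiction from $c\geq 3$. Since $\eta\notin\sing\Gr(\Ecal)^{\ast\ast}$, each $(\Qcal_k^{\ast\ast})_\eta$ is a free $R$-module, and by \eqref{eqn:algsingset} together with $\Gr(\Ecal)^{\ast\ast}/\Gr(\Ecal)=\bigoplus_k(\Qcal_k^{\ast\ast}/\Qcal_k)$ we get $\eta\in\bigcup_k\supp(\Qcal_k^{\ast\ast}/\Qcal_k)$. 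As $W$ is irreducible it lies in $\supp(\Qcal_j^{\ast\ast}/\Qcal_j)$ for some $j$, and being a component of the larger set $Z^{alg}$ it is in fact a component of $\supp(\Qcal_j^{\ast\ast}/\Qcal_j)$; I would choose $j$ minimal with this property. Note $j\geq 2$, because $\Qcal_1=\Ecal_1$ is reflexive by Lemma \ref{lem:sheaf}, so $\Qcal_1^{\ast\ast}/\Qcal_1=0$.

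The crux is that the minimality of $j$ forces $(\Ecal_{j-1})_\eta$ to be a \emph{free} $R$-module. Indeed, for $k<j$ we have $\eta\notin\supp(\Qcal_k^{\ast\ast}/\Qcal_k)$, hence $(\Qcal_k)_\eta=(\Qcal_k^{\ast\ast})_\eta$ is free; feeding this into the localized sequences $0\to(\Ecal_{k-1})_\eta\to(\Ecal_k)_\eta\to(\Qcal_k)_\eta\to 0$ and using that $\Ext^1_R$ between free modules vanishes, an induction on $k$ (starting from $(\Ecal_0)_\eta=0$) shows that $(\Ecal_{j-1})_\eta$ is free, of depth $c$.

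Now I would run two local cohomology computations for the functor with support at $\mathfrak m\subset R$. First, since $\eta$ is the generic point of a component of $\supp(\Qcal_j^{\ast\ast}/\Qcal_j)$, the stalk $T:=(\Qcal_j^{\ast\ast}/\Qcal_j)_\eta$ is a nonzero module of finite length; from $0\to(\Qcal_j)_\eta\to(\Qcal_j^{\ast\ast})_\eta\to T\to 0$ with $(\Qcal_j^{\ast\ast})_\eta$ free (so $H^0_{\mathfrak m}=0$) one gets an injection $T\hookrightarrow H^1_{\mathfrak m}((\Qcal_j)_\eta)$, hence $H^1_{\mathfrak m}((\Qcal_j)_\eta)\neq 0$. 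Second, from $0\to(\Ecal_{j-1})_\eta\to(\Ecal_j)_\eta\to(\Qcal_j)_\eta\to 0$ one has the exact piece $H^1_{\mathfrak m}((\Ecal_j)_\eta)\to H^1_{\mathfrak m}((\Qcal_j)_\eta)\to H^2_{\mathfrak m}((\Ecal_{j-1})_\eta)$. Here $H^1_{\mathfrak m}((\Ecal_j)_\eta)=0$: by Lemma \ref{lem:sheaf} $\Ecal_j$ is reflexive, hence locally a second syzygy sheaf, so its stalk has depth $\geq 2$ over $R$ (Auslander--Buchsbaum). And $H^2_{\mathfrak m}((\Ecal_{j-1})_\eta)=0$ because $(\Ecal_{j-1})_\eta$ is free of depth $c\geq 3>2$. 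Therefore $H^1_{\mathfrak m}((\Qcal_j)_\eta)=0$, contradicting the first computation. Hence $c=2$, and $Z^{alg}$ is pure of codimension $2$ away from $\sing\Gr(\Ecal)^{\ast\ast}$.

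The single delicate point is the freeness of $(\Ecal_{j-1})_\eta$: if one only knew that the $\Ecal_i$ were reflexive, the term $H^2_{\mathfrak m}((\Ecal_{j-1})_\eta)$ would in general be nonzero and the contradiction would evaporate — it is precisely the minimality of $j$, enabled by the reflexivity of $\Qcal_1=\Ecal_1$, that makes the vanishing strong enough. Everything else is routine commutative algebra: the behavior of depth in short exact sequences, the splitting of extensions of free modules, the $\Ext$/depth dictionary, and the vanishing $H^i_{\mathfrak m}(R^{\oplus m})=0$ for $i<\dim R$.
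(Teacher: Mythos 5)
Your argument is correct, and it reaches the conclusion by a genuinely different (though closely parallel) route. The paper works on a coordinate ball $B_\sigma(x)$ missing $\sing\Gr(\Ecal)^{\ast\ast}$ and assumed to meet $Z^{alg}$ in codimension $\geq 3$: it tensors the $k$-th step of the filtration by $\Qcal_k^\ast$, invokes Scheja's extension theorem to get $H^1(B_\sigma(x)-Z^{alg},\Ecal_{k-1}\otimes\Qcal_k^\ast)=0$, splits the sequence, concludes $\Qcal_k=\Qcal_k^{\ast\ast}$ and $\Ecal_k$ locally free, and climbs the filtration from the bottom until the ball is shown to miss $Z^{alg}$ entirely. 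You instead localize at the generic point of a hypothetical component of codimension $c\geq 3$ and replace Scheja's theorem by the vanishing $H^2_{\mathfrak m}(R^{\oplus m})=0$ for $c\geq 3$; your ``minimal $j$'' device packages the same bottom-up induction that the paper runs explicitly (both arguments need the lower piece $\Ecal_{j-1}$ to be free/locally free at the relevant point before the contradiction at step $j$ can be extracted, and you correctly flag that mere reflexivity of $\Ecal_{j-1}$ would not suffice). The two mechanisms are really the same vanishing in different clothing --- Scheja's theorem for a locally free sheaf on a ball minus a codimension $\geq 3$ set is the sheaf-cohomological avatar of the depth statement $H^i_{\mathfrak m}(\text{free})=0$ for $i<c$ --- so each approach buys what the other does: the paper's is self-contained given the cited reference and stays in the analytic category throughout, while yours is pure local commutative algebra and arguably more transparent about exactly where $c\geq 3$ enters. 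Two cosmetic points: ``the generic point $\eta$ of $W$'' should be read as localizing the stalk at a general point of $W$ at the prime of the germ of $W$ (there is no literal generic point of an analytic space), and the fact that a reflexive module over a regular local ring of dimension $\geq 2$ has depth $\geq 2$ is the second-syzygy/depth-lemma argument rather than the Auslander--Buchsbaum formula; neither affects the validity of the proof.
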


\begin{proof}
  Choose a coordinate ball $B_\sigma(x)\subset X-\sing \Gr(\Ecal)^{\ast\ast}$,  and assume that ${\rm codim}(B_\sigma(x)\cap Z^{alg})\geq 3$. Set $U=B_\sigma(x)-Z^{alg}$.
The first step in the filtration is:
\begin{equation} \label{eqn:exact1}
0\lra \Ecal_1\lra \Ecal_2\stackrel{ q_2 }{\lra} \Qcal_2\lra 0
\end{equation}
 By Lemma \ref{lem:sheaf}, 
 $\Ecal_1$ and $\Ecal_2$ are reflexive. Furthermore, since $B_\sigma(x)$ misses $\sing \Gr(\Ecal)^{\ast\ast}$,
 $\Ecal_1=\Ecal_1^{\ast\ast}$ and $\Qcal_2^{\ast\ast}$ are locally free.  Since $\Qcal_2^\ast=(\Qcal_2^{\ast\ast})^\ast$ is also locally free, tensoring \eqref{eqn:exact1} by $\Qcal_2^\ast$
 leaves the  sequence exact. Consider the resulting exact sequence in cohomology:
\begin{equation} \label{eqn:exact2}
H^0(U, \Ecal_2\otimes\Qcal_2^\ast)\lra H^0(U,\Qcal_2\otimes\Qcal_2^\ast)
\lra H^1(U, \Ecal_1\otimes\Qcal_2^\ast)
\end{equation}
Since $\Ecal_1\otimes\Qcal_2^\ast$ is locally free on $B_\sigma(x)$ and ${\rm codim}(B_\sigma(x)\cap Z^{alg})\geq 3$, it follows
from Scheja's theorem \cite[Sec.\ 3, Satz 3]{Scheja61} that $H^1(U, \Ecal_1\otimes\Qcal_2^\ast)\simeq H^1(B_\sigma(x), \Ecal_1\otimes\Qcal_2^\ast)=\{0\}$.  In particular, 
 the image of the identity ${\bf I}_{\Qcal_2}\in H^0(U,{\rm Hom}(\Qcal_2,\Qcal_2))=H^0(U,\Qcal_2\otimes\Qcal_2^\ast)$ by the coboundary map, is trivial. By exactness of \eqref{eqn:exact2} this means there is $\varphi: \Qcal_2\to \Ecal_2$ on $U$ satisfying $q_2 \circ\varphi={\bf I}_{\Qcal_2}$.  By normality, $\varphi$ extends to a map $\tilde \varphi:\Qcal_2^{\ast\ast}\to \Ecal_2$ on $B_\sigma(x)$. If  $\tilde q_2  : \Ecal_2\to \Qcal_2^{\ast\ast}$ is the  map obtained by composing $q_2 $ with the inclusion $\Qcal_2\hookrightarrow\Qcal_2^{\ast\ast}$, then 
clearly $\tilde q_2\circ \tilde\varphi= {\bf I}_{\Qcal_2^{\ast\ast}}$.  In particular, $\tilde q_2 $ is surjective, and hence $\Qcal_2=\Qcal_2^{\ast\ast}$. So there is no contribution to $Z^{alg}$ from this term in the filtration. Moreover,
since $\Ecal_1$ and $\Qcal_2$ are locally free, eq.\ \eqref{eqn:exact1} implies as in the previous lemma that $\Ecal_2$ is locally free on $B_\sigma(x)$. Now consider the next step in the filtration:
$
0\to \Ecal_2\to \Ecal_3\stackrel{q_3 }{\lra} \Qcal_3\to 0
$.
Again, $\Ecal_2$ and $\Qcal_3^{\ast\ast}$ are locally free on $B_\sigma(x)$, and the argument proceeds as above.  Continuing in this way, we conclude that $ B_\sigma(x)\cap Z^{alg}=\emptyset$. The statement in the proposition follows.
\end{proof}

%We point out that since $\Gr (\Ecal)$ is torsion free, there is an
%injection $\Gr (\Ecal)\hookrightarrow \Gr (\Ecal)^{\ast \ast }$, and
%an the exact sequence:%
%\begin{equation*}
%0\longrightarrow \Gr (\Ecal)\longrightarrow \Gr (\Ecal)^{\ast \ast
%}\longrightarrow \Gr (\Ecal)^{\ast \ast }/\Gr (\Ecal)\longrightarrow 0
%\end{equation*}%
%so that $\Gr (\Ecal)$ fails to be locally free at a point if and only if
%either $\Gr (\Ecal)^{\ast \ast }$ is singular, or the quotient is
%non-trivial at that point. That is:%
%\begin{equation*}
%Z^{alg}=\sing \Gr (\Ecal)=\sing \Gr (\Ecal)^{\ast
%\ast }\cup \supp \left( \Gr (\Ecal)^{\ast \ast }/\Gr(\Ecal)\right) .
%\end{equation*}
%Since $\Gr (\Ecal)^{\ast \ast }$ is a reflexive sheaf its singular set
%has codimension at least $3$. Therefore the irreducible components of
%codimension $2$ of $Z^{alg}$ are exactly the irreducible
%components of codimension $2$ of $\supp \left( \Gr (\Ecal)^{\ast
%\ast }/\Gr (\Ecal)\right) $.

The main example of interest in this paper is the following. Recall that for
any reflexive sheaf $\mathcal{E }$ on a K\"{a}hler manifold $X$ there is a
canonical filtration of $\mathcal{E }$ by saturated subsheaves $\mathcal{E }%
_{i}$ whose successive quotients are torsion-free, semistable. The slopes $%
\mu_i=\mu(\mathcal{Q}_i)$ satisfy $\mu_1>\mu_2>\cdots>\mu_\ell$. This
filtration is called the \emph{Harder-Narasimhan filtration} of $\mathcal{E}$%
. Moreover, there is a further filtration of the quotients by subsheaves so
that the successive quotients are stable. We call this the \emph{%
Harder-Narasimhan-Seshadri (HNS) filtration}. The associated graded sheaf $%
\Gr(\mathcal{E})$ is a direct sum of stable torsion-free sheaves. It depends
on the choice of K\"ahler class $[\omega]$ but is otherwise canonically
associated to $\mathcal{E}$ up to permutation of isomorphic factors. The $(%
\rank\mathcal{E})$-vector $\vec\mu$ obtained by repeating each $%
\mu_1,\ldots, \mu_\ell$, $\rank\mathcal{Q}_i$ times, is called the \emph{%
Harder-Narasimhan type} of $\mathcal{E}$.

Two more remarks:

\begin{itemize}
\item Strictly speaking, the HNS construction gives rise to a \emph{double}
filtration; however, this fact presents no difficulties, and for simplicity
we shall treat the HNS filtration like the general case.  For more details
the reader may consult \cite{DaskalWentworth04}, \cite{Sibley}, or the book 
\cite{Kobayashi87}.

\item We sometimes use the same notation $\Gr(\mathcal{E})$ for the
associated graded of a general filtration as well as for the HNS filtration
of $\mathcal{E}$. The context will hopefully make clear which is meant.
\end{itemize}

%We will also make use of the following fundamental construction of
%Uhlenbeck-Yau. Let $\mathcal{E}\to X$ be a hermitian holomorphic vector
%bundle with underlying smooth bundle $E$ and Chern connection $A$. An element $\pi\in L^2_1(X,\End E)$
%is called a \emph{weakly holomorphic $L^2_1$-projection} if
%
%\begin{itemize}
%\item $\pi^2=\pi$;
%
%\item $\pi^\ast=\pi$;
%
%\item $(I-\pi)\bar\partial_A\pi=0$.
%\end{itemize}
%If $\mathcal{S}\subset \mathcal{E}$ is a saturated subsheaf, then the
%orthogonal projection onto $\mathcal{S}$ on the open set where $\mathcal{S}$
%and $\mathcal{E}/\mathcal{S}$ are locally free defines a weakly holomorphic $%
%L^2_1$-projection. A key fact is that the converse also holds.
%
%\begin{proposition}[Uhlenbeck-Yau \protect\cite{UhlenbeckYau86}]
%\label{prop:uhlenbeck-yau} Given a hermitian holomorphic vector bundle $%
%\mathcal{E}\to X$, $X$ K\"ahler, there is a one-to-one correspondence
%between saturated subsheaves of $\mathcal{E}$ and weakly holomorphic $L^2_1$%
%-projections.
%\end{proposition}%A key fact is that the usual degree formula for $\Scal$ may be computed using $\pi$ \cite[eq.\ (4.16)]{UhlenbeckYau86}.  In particular, there is an estimate
%\begin{equation} \label{eqn:beta-L2}
%\Vert\beta\Vert_{L^2}^2\leq (\rank\Scal)^{1/2}\Vert \Lambda_\omega F_A\Vert_{L^1}-\frac{2\pi\rank\Scal}{(n-1)!}\mu(\Scal)
%\end{equation}
%where $\beta=-\dbar_A\pi$ is the second fundamental form on the smooth part of $\Scal\subset\Ecal$.

\subsection{Multiplicities associated to the support of a sheaf}

If $\mathcal{F}$\ \ is a coherent sheaf on a complex manifold $X$, then the
support  $Z=\supp \mathcal{F}$ is a closed, complex analytic subvariety of $X
$. Moreover, $\supp \mathcal{F}$ is the vanishing set $V(\mathcal{I}_{%
\mathcal{F}})$, where $\mathcal{I}_{\mathcal{F}} \subset \mathcal{O}_{X}$ is
the annihilator ideal sheaf whose presheaf on an open set $U$ is the subset
of functions $\mathcal{O}_{X}(U)$ that annihilate all local sections in $%
\mathcal{F}(U)$. This ideal gives $\supp\mathcal{F}$ the structure of a
complex analytic subspace with structure sheaf $\mathcal{O}_{Z}=\mathcal{O}%
_{X}/\mathcal{I}_{\mathcal{F}}$. Now $Z$ has a decomposition $Z=\cup_{j}Z_{j}
$ into irreducible components $Z_{j}$, with structure sheaves $\mathcal{O}%
_{Z_{j}}$. If necessary, we take the reduced structures, so that each $Z_{j}$
is a reduced and irreducible complex subspace of $X$ with ideal $\mathcal{I}%
_{j}$ (i.e.\ $\supp \mathcal{O}_{X}/\mathcal{I}_{j}=Z_{j}$). The fact that $%
Z_{j}$ is irreducible means that the complex manifold $Z_{j}-\sing Z_{j}$ is
connected.

Note that for the inclusion $\imath :Z_{j}\hookrightarrow X$, the
restriction $\imath ^{\ast }\mathcal{F}$ of $\mathcal{F}$ to $Z_{j}$ is a
coherent sheaf of $\mathcal{O}_{Z_{j}}$ modules. In this way we may regard $%
\mathcal{F}$ as a sheaf on $Z_{j}$. The fibres of $\mathcal{F}$ on $Z$ are
the finite dimensional $\CBbb$-vector spaces 
\begin{equation*}
\mathcal{F}(z)=\mathcal{F}_{z}/\mathfrak{m}_{z}\mathcal{F}_{z}=\mathcal{F}%
_{z}\otimes_{\mathcal{O}_{Z_j,z}} \CBbb,
\end{equation*}%
where $\mathfrak{m}_{z}$ is the maximal ideal in the local ring $\mathcal{O}%
_{Z_{j},z}$, and the rank, $\rank _{z}\mathcal{F}$ at a point is the
dimension of this vector space.

It is not difficult to see (cf.\ \cite[p.\ 91]{GR}) that $\imath^\ast%
\mathcal{F}$ is locally free at a point $z_{0}\in Z_{j}$ if and only if the
function $z\mapsto \rank _{z}\mathcal{F}$ is constant for $z\in Z_j$ near $%
z_{0}$. Therefore away from the set $\sing Z_{j}\cup \sing \imath^\ast 
\mathcal{F}$ of points where $\imath^\ast\mathcal{F}$ fails to be locally
free and $Z_{j}$ is singular, this function is constant (since the set of
smooth points of $Z_{j}$ is connected). The set $\sing Z_{j}\cup \sing %
\imath^\ast \mathcal{F}$ is a proper subvariety of $Z_{j}$, and nowhere
dense in $Z_{j}$ (since $Z_{j}$ is reduced). In particular, this subvariety
has dimension less than $Z_{j}$, and therefore the generic rank of $%
\imath^\ast\mathcal{F}$ on $Z_{j} $ is well-defined. Another way of saying
this is that if $Z_{j}$ has codimension $k$, $z\in Z_{j}$ is a generic
smooth point, and $\Sigma $ is a (locally defined) complex submanifold of $X$
of dimension $k$ intersecting $Z_{j}$ transversely at $z$, then the $\CBbb$-vector space $(\mathcal{O}_{\Sigma })_{z}/(\mathcal{I}_{j}\bigr|%
_\Sigma )_{z} $ is finite dimensional, and its dimension is generically
constant and equal to the rank of $\imath^\ast\mathcal{F}$.

\begin{definition}
\label{def:algmult} \bigskip Given a coherent sheaf $\mathcal{F}$ on $X$ and
an  irreducible component $\imath: Z\hookrightarrow\supp \mathcal{F}$,
define the \emph{multiplicity} $m_{Z}$ of $\mathcal{F}$ along $Z$ to be the
rank of $\imath^\ast\mathcal{F }$.
\end{definition}

For any $k$ we can define an $(n-k)$-cycle associated to $\mathcal{F}$ by:%
\begin{equation}  \label{eqn:cycle}
\left[ \mathcal{F}\right] _{k}=\mathop{\sum_{{\rm irred. }\, Z\subset\supp
\mathcal{F}}}_{\codim Z=k}m_{Z}[Z]
\end{equation}
Of particular interest in this paper is the associated graded sheaf $%
\mathcal{F}=\mathrm{Gr }(\mathcal{E})$ of a locally free sheaf $\mathcal{E}$
with a filtration by saturated subsheaves.  The quotient by the inclusion $%
\Gr (\mathcal{E})\hookrightarrow \Gr (\mathcal{E})^{\ast \ast }$ yields a
torsion sheaf which has support in codimension $2$. The irreducible
components $\{Z_{j}^{alg}\}$ of $Z^{alg}$ with  codimension $=2$ have
associated algebraic multiplicities from Definition \ref{def:algmult}. We
will denote these by $m_{j}^{alg}$ and refer to them as the \emph{algebraic
multiplicities} of the filtration.

Note that in the case $\dim _{\CBbb}X=2$, $\Gr (\mathcal{E}%
)^{\ast \ast }$ is locally free and $\Gr (\mathcal{E})^{\ast \ast }/\Gr (%
\mathcal{E})$ is supported at points. The structure sheaf of singular point $%
z$ is $\mathcal{O}_{X,z}/\mathfrak{m}_{z}=\CBbb$, so the fibre
at this point is just the stalk, and the multiplicity is the $\CBbb$-dimension of the stalk. This was the definition of $m_z^{alg}$ used in 
\cite{DaskalWentworth07}.

\subsection{Uhlenbeck limits and the analytic singular set}

\label{sec:analytic}

We briefly recall the Uhlenbeck compactness theorem. It is a combination of
the results of \cite{Uhlenbeck82a}, \cite{Uhlenbeck}, and \cite{Nakajima88} (see also Theorem $%
5.2$ of \cite{UhlenbeckYau86}).

\begin{theorem}[Uhlenbeck] 
\label{thm:uhlenbeck}  Let $X$ be a compact K\"{a}hler manifold of complex
dimension $n$ and $(E,h)$ a hermitian vector bundle on $X$. Let $A_{i}$ be a
sequence of integrable unitary connections on $E$ with $\vert \Lambda_{\omega }F_{A_{i}}\vert$ uniformly bounded and $\Vert d_{A_i}^\ast
F_{A_{i}}\Vert _{L^{2}}\to 0$ . Fix $p>2n$. Then there is:

\begin{itemize}
\item a subsequence $($still denoted $A_{i}$$)$

\item a closed subset $Z^{an}\subset X$ of finite $(2n-4)$-Hausdorff measure;

\item a hermitian vector bundle $(E_{\infty },h_{\infty })$ defined on $X-
Z^{an}$ and $L^p_{2,loc.}$-isometric to $(E,h)$;

\item an admissible Yang-Mills connection $A_{\infty }$ on $E_{\infty }$;
\end{itemize}

such that up to unitary gauge equivalence $A_{i}$ converges weakly in $%
L_{1,loc}^{p}(X- Z^{an})$ to $A_{\infty }$.
\end{theorem}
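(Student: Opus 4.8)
The plan is to reduce the statement to two pieces of hard analysis due to Uhlenbeck---a uniform energy bound coming from the K\"ahler structure, and an $\varepsilon$-regularity theorem for approximate Yang--Mills connections---and then to assemble the limit by a standard gauge-patching argument. First I would note that the hypotheses already force $YM(A_i)\le C$ uniformly. Indeed, each $A_i$ is integrable, so $F_{A_i}=F_{A_i}^{1,1}$ and splits pointwise into its primitive part and its $\Lambda_\omega$-trace; by the K\"ahler identities together with Chern--Weil, $\|F_{A_i}\|_{L^2}^2$ differs from $\|\Lambda_\omega F_{A_i}\|_{L^2}^2$ (up to a fixed positive factor) by a constant depending only on the topology of $E$ and the class $[\omega]$, namely the relevant combination of $\int_X c_1(E)^2\wedge\omega^{n-2}$ and $\int_X c_2(E)\wedge\omega^{n-2}$. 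Since $|\Lambda_\omega F_{A_i}|$ is uniformly bounded and $X$ is compact, the claim follows. Thus we are reduced to a sequence of approximate Yang--Mills connections with uniformly bounded Yang--Mills energy.

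The heart of the matter, and the step I expect to be the main obstacle, is an $\varepsilon$-regularity theorem; this is precisely the content of \cite{Uhlenbeck} (together with the local gauge theory of \cite{Uhlenbeck82a}). The condition $\|d_{A_i}^\ast F_{A_i}\|_{L^2}\to 0$ makes the $A_i$ approximate Yang--Mills connections, and as such they satisfy an approximate monotonicity formula for the scale-invariant energy $r^{4-2n}\int_{B_r(x)}|F_{A_i}|^2$. Using this one proves: there exist $\varepsilon_0>0$ and $r_0>0$, depending only on the geometry of $(X,\omega)$, such that if $r^{4-2n}\int_{B_r(x)}|F_{A_i}|^2<\varepsilon_0$ for some $r\le r_0$ and all large $i$, then $\sup_{B_{r/2}(x)}|F_{A_i}|$ is bounded uniformly in $i$, and after a unitary change of gauge the connection $1$-forms $A_i$ are bounded in $L^p_1(B_{r/4}(x))$ for the fixed $p>2n$. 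The mechanism is to convert $L^2$-smallness of curvature at a single fixed scale into genuine small-energy control at all smaller scales via monotonicity, then to invoke Uhlenbeck's local Coulomb gauge theorem and bootstrap the (perturbed) Yang--Mills equation elliptically. The genuine difficulty, relative to the conformally invariant case $n=2$, is exactly that one controls $F_{A_i}$ only in $L^2$ rather than in the critical space $L^n$, and it is the approximate Yang--Mills hypothesis, through monotonicity, that bridges this gap.

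Granting Step~2, I would set
\[
Z^{an}=\bigcap_{0<r\le r_0}\Bigl\{x\in X:\ \limsup_{i\to\infty}\ r^{4-2n}\!\!\int_{B_r(x)}\!|F_{A_i}|^2\ \ge\ \varepsilon_0\Bigr\}.
\]
A Vitali covering argument against the energy bound from Step~1 shows $\mathcal H^{2n-4}(Z^{an})<\infty$: for fixed $r$ a maximal $r$-separated subset of $Z^{an}$ has at most $O(r^{4-2n})$ points, since each contributes energy $\ge\varepsilon_0 r^{2n-4}$ to $YM(A_i)\le C$, and letting $r\to 0$ bounds the $(2n-4)$-Hausdorff measure. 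Covering $X\setminus Z^{an}$ by countably many balls on each of which the hypothesis of Step~2 holds for all large $i$, a diagonal argument produces a subsequence along which the Coulomb-gauge representatives converge weakly in $L^p_{1}$ on each ball and the unitary transition maps between overlapping gauges converge in $L^p_2$. Patching these data yields a hermitian bundle $(E_\infty,h_\infty)$ over $X\setminus Z^{an}$ that is $L^p_{2,loc}$-isometric to $(E,h)$ there, carrying a connection $A_\infty$ realized as the local weak limit of the $A_i$.

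Finally, passing $F_{A_i}^{0,2}=0$ and $d_{A_i}^\ast F_{A_i}\to 0$ to the weak limit shows $A_\infty$ is weakly Yang--Mills and integrable; elliptic regularity in a local Coulomb gauge upgrades it to a smooth Yang--Mills connection on $X\setminus Z^{an}$. The $\varepsilon$-regularity of Step~2 gives $F_{A_\infty}\in L^\infty_{loc}(X\setminus Z^{an})$, lower semicontinuity of $\||\Lambda_\omega F_{A_i}|\|_{L^\infty}$ gives $\Lambda_\omega F_{A_\infty}\in L^\infty(X)$, and lower semicontinuity of the Yang--Mills energy under weak convergence gives $F_{A_\infty}\in L^2(X,\omega)$; hence $A_\infty$ is an admissible Yang--Mills connection, as required. (That $E_\infty$ moreover extends across a further set of complex codimension $\ge 3$ is the separate removable-singularities statement discussed in the introduction and is not needed for this theorem.)
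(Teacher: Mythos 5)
The paper does not prove this theorem: it is quoted as a known result, ``a combination of the results of \cite{Uhlenbeck82a} and \cite{Uhlenbeck} (see also Theorem 5.2 of \cite{UhlenbeckYau86}),'' so there is no in-paper proof to compare against. Your sketch is a faithful reconstruction of the standard argument behind that citation: the topological a priori bound on $YM(A_i)$ from integrability and the bounded Hermitian--Einstein tensor, $\varepsilon$-regularity plus Coulomb gauge fixing, the covering bound on $\mathcal H^{2n-4}(Z^{an})$, diagonal extraction and gauge patching, and the passage to the limit of the equations to get an admissible Yang--Mills limit. All of these steps are sound.

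One point of attribution is worth correcting, because the paper itself records the precise mechanism in Section \ref{sec:chernclass}: the monotonicity formula \eqref{eqn:monotone} used here is Uhlenbeck's \cite[Thm.\ 3.5]{Uhlenbeck} for \emph{integrable} connections with $\sup_X|\Lambda_\omega F_A|\le H_0$ --- the quantity $e_A(x,\sigma)$ in \eqref{eqn:density} is built from $H_0$, not from $d_A^\ast F_A$. So the monotonicity does not come from the approximate Yang--Mills condition $\Vert d_{A_i}^\ast F_{A_i}\Vert_{L^2}\to 0$, as you assert; with only $L^2$-smallness of $d_A^\ast F_A$ an almost-monotonicity formula would carry error terms that are awkward to control uniformly, whereas the K\"ahler/integrable route through the Hermitian--Einstein tensor gives clean monotonicity at all scales. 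The hypothesis $\Vert d_{A_i}^\ast F_{A_i}\Vert_{L^2}\to 0$ enters only at the end, to conclude that the weak limit is Yang--Mills. Relatedly, the $\varepsilon$-regularity in this setting (\cite[Thm.\ 2.6]{Uhlenbeck}, quoted as \eqref{eqn:lp}) yields $L^p$ bounds on curvature with $p>2n$ rather than sup bounds; that is exactly what is needed for the $L^p_1$ Coulomb gauges and the $L^p_{2,loc}$ patching, so your conclusion stands, but the intermediate claim of a uniform $\sup|F_{A_i}|$ bound is stronger than what is used or needed.
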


We call the resulting connection $A_\infty$ an \emph{Uhlenbeck limit} and
the set $Z^{an}$ the \emph{analytic singular set}. A priori both depend on
the choice of subsequence in the statement of the theorem.

\begin{remark} \label{rem:removable}
By \cite[p.\ 62, Theorem]{Bando91}
(see also \cite{BandoSiu94}), the holomorphic structure $\bar\partial_{A_\infty}$ on 
$E_{\infty }$ extends as a reflexive sheaf $\mathcal{E }_{\infty }\to X$,
and the metrics extend smoothly to $X- \sing \mathcal{E }_{\infty }$. As
mentioned at the end of Section \ref{sec:stability}, since the limiting
connection is Yang-Mills, $\mathcal{E }_{\infty }$ decomposes
holomorphically and isometrically as a direct sum %\begin{equation*}
%\Ecal _{\infty }=Q_{\infty }^{1}\oplus \cdots \oplus Q_{\infty }^{l}
%\end{equation*}%
of stable reflexive sheaves with admissible Hermitian-Einstein metrics.
\end{remark}

The following result identifies Uhlenbeck limits of certain sequences of
isomorphic bundles.

\begin{theorem}[\protect\cite{DaskalWentworth04, Sibley}]
\label{thm:dws} Let $A_i$ be a sequence of connections in a complex gauge
orbit of a holomorphic bundle $\mathcal{E}$ with Harder-Narasimhan type $%
\vec\mu$ and satisfying the hypotheses of Theorem \ref{thm:uhlenbeck}.
Assume further that functionals $\mathrm{HYM}_\alpha(A_i)\to \mathrm{HYM}%
_\alpha(\vec\mu)$ for $\alpha\in [1,\infty)$ in a set that includes $2$ and
has a limit point. Then any Uhlenbeck limit of $A_i$ defines a reflexive
sheaf $\mathcal{E}_\infty$ which is isomorphic to $\Gr(\mathcal{E}%
)^{\ast\ast}$.
\end{theorem}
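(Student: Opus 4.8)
The plan is to prove Theorem~\ref{thm:dws} by a Yang-Mills-flow/continuity argument combined with the Hitchin-Kobayashi correspondence, following the strategy of \cite{DaskalWentworth04, Sibley}. First I would recall that the Yang-Mills flow (or the Donaldson functional flow) applied to a connection $A_i$ in the complex gauge orbit of $\mathcal{E}$ does not change the holomorphic structure up to complex gauge, and that the HYM functionals $\mathrm{HYM}_\alpha$ are constructed so that $\mathrm{HYM}_\alpha(\vec\mu)$ is the infimum over the orbit, attained only ``at the associated graded.'' Thus the hypothesis $\mathrm{HYM}_\alpha(A_i)\to\mathrm{HYM}_\alpha(\vec\mu)$ for $\alpha$ in a set with a limit point says the $A_i$ form a minimizing sequence for the full spectral invariant, not just for $YM$. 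The point of requiring $\alpha$ to range over a set with a limit point is that $\alpha\mapsto \mathrm{HYM}_\alpha(\vec\mu)$ and $\alpha\mapsto \lim_i \mathrm{HYM}_\alpha(A_i)$ are real-analytic (or at least determined by their values on such a set), so one really controls the \emph{entire} eigenvalue distribution of $\sqrt{-1}\Lambda_\omega F_{A_\infty}$, forcing the Uhlenbeck limit to have the Harder-Narasimhan type $\vec\mu$.

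The key steps, in order: (1) By Theorem~\ref{thm:uhlenbeck}, pass to a subsequence and obtain an Uhlenbeck limit $A_\infty$, an admissible Yang-Mills connection on a reflexive sheaf $\mathcal{E}_\infty$; by the Bando-Siu remark it splits as $\bigoplus_k \mathcal{V}_k$ into stable reflexive pieces with admissible Hermitian-Einstein metrics of slopes $\nu_k$. (2) Show that the sorted list of these slopes $\nu_k$, with multiplicities $\rank\mathcal{V}_k$, equals $\vec\mu$: upper semicontinuity of $\mathrm{HYM}_\alpha$ under Uhlenbeck convergence together with the convergence hypothesis pins down $\lim_i \mathrm{HYM}_\alpha(A_i)=\mathrm{HYM}_\alpha(\vec\mu)$ while, since $A_\infty$ is Yang-Mills, $\mathrm{HYM}_\alpha(A_\infty)$ is computed directly from its eigenvalues $\nu_k$; matching these as functions of $\alpha$ on the given set forces $\{\nu_k\}=\vec\mu$ as multisets. (3) Produce a nonzero sheaf map $\Gr(\mathcal{E})^{\ast\ast}\to\mathcal{E}_\infty$ (or vice versa): restricting to $X\setminus(Z^{an}\cup Z^{alg})$, one uses weak $L^p_{1,loc}$-convergence to transport the second-fundamental-form/projection data of the HNS filtration to the limit, producing nontrivial holomorphic maps between the stable summands of the same slope on both sides. (4) Since both sides are polystable (direct sums of stable sheaves) of the same slope and the same total Chern class data, any nonzero map between stable sheaves of equal slope is an isomorphism; assembling these gives $\mathcal{E}_\infty\cong\Gr(\mathcal{E})^{\ast\ast}$. (5) Finally note the limit is independent of subsequence because $\Gr(\mathcal{E})^{\ast\ast}$ is canonically determined by $\mathcal{E}$ and $[\omega]$.

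The main obstacle is step~(3): constructing the holomorphic comparison maps across the singular set. Away from $Z^{an}$ one has only \emph{weak} $L^p_{1,loc}$-convergence of the connections, so the naive limit of the subsheaf inclusions $\mathcal{E}_i\hookrightarrow\mathcal{E}$ need not converge strongly; one must instead use the uniform $L^2$-bounds on curvature and the $\varepsilon$-regularity/monotonicity estimates (Uhlenbeck \cite{Uhlenbeck}, as used for Proposition~\ref{prop:ch2}) to upgrade to local smooth convergence off the singular set, then show the resulting limiting projections are holomorphic and nonzero. Establishing that the limiting map is \emph{nonzero} — i.e.\ that no ``mass escapes into the singular set'' — is where the slope-matching from step~(2) is essential, since it rules out the degenerate possibility that the would-be isomorphism collapses. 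A secondary technical point is justifying that the Bando-Siu extension across $\sing\mathcal{E}_\infty$ is compatible with these maps, i.e.\ that reflexivity/normality lets one extend the isomorphism over the codimension-$\geq 3$ singularities (Hartogs-type argument, as in Proposition~\ref{prop:scheja}).
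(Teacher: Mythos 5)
This theorem is stated in the paper as an imported result, cited to \cite{DaskalWentworth04, Sibley} with no proof given, so there is no in-paper argument to compare against; what can be assessed is whether your outline faithfully reflects the strategy of the cited works. At the level of architecture it does: the actual proofs do proceed by (a) extracting an Uhlenbeck limit that splits into stable reflexive summands with admissible Hermitian--Einstein metrics, (b) using the Atiyah--Bott inequality $\mathrm{HYM}_\alpha(A)\geq\mathrm{HYM}_\alpha(\vec\mu)$ together with semicontinuity of the $L^\alpha$-norms under weak convergence (note: \emph{lower} semicontinuity, not upper as you wrote) and the ``limit point in $\alpha$'' hypothesis to force the eigenvalue vector of $\sqrt{-1}\Lambda_\omega F_{A_\infty}$ to equal $\vec\mu$, (c) constructing nonzero holomorphic maps between the graded pieces and the summands of $\mathcal{E}_\infty$, and (d) invoking the standard fact that a nonzero map between stable sheaves of equal slope is an isomorphism, extended over the codimension-$\geq 3$ singular set by reflexivity/normality.

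That said, your step (3) is not a proof but a statement of the problem, and it is precisely where the content of \cite{DaskalWentworth04} and \cite{Sibley} lives. The mechanism used there is not a direct ``transport of second fundamental forms'': one takes the sequence of $h$-orthogonal projections $\pi_i$ onto the subsheaves of the HNS filtration (or, dually, the induced maps to the quotients), establishes uniform $L^2_1$-bounds from the curvature and slope estimates, extracts a weak limit, and shows the limit is a \emph{weakly holomorphic projection} in the sense of Uhlenbeck--Yau, hence (by the Uhlenbeck--Yau/Popovici regularity theorem) defines a coherent subsheaf of $\mathcal{E}_\infty$; nontriviality of the limit is forced by the slope identities from step (b), and one then inducts on the length of the filtration. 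In higher dimensions \cite{Sibley} additionally needs a resolution of singularities of the HNS filtration to reduce to the case where the subsheaves are subbundles before running this limit argument -- an ingredient absent from your sketch. So your proposal is an accurate road map of the cited proofs but does not itself constitute one; if you intend to write this out, the weakly holomorphic projection construction and the induction over the filtration are the pieces you must supply.
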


\noindent Here, the functionals $\mathrm{HYM}_\alpha$ are generalizations of
the Yang-Mills energy that were introduced in \cite{AtiyahBott82}. All the
hypotheses of Theorem \ref{thm:dws} are in particular satisfied if $A_i$ is
a sequence $A_{t_i}$, $t_i\to\infty$, along the Yang-Mills flow. In this
case, Hong and Tian \cite{HongTian04} prove that the convergence is in fact $%
C^{\infty }$ away from $Z^{ an}$.

%In \cite{DaskalWentworth04} and \cite{Sibley} it was proven under certain
%assumptions on the sequence (see the introduction), in two dimensions and
%higher dimensions respectively, that the direct sum decomposition is exactly
%the double dual of the associated graded object of $\Ecal$, that is:
%$
%\Ecal _{\infty }\simeq\Gr (\Ecal)^{\ast \ast }
%$. This implies that the Uhlenbeck limit  $A_{\infty }$ is the unique (up to
%gauge) Yang-Mills connection on $\Gr (\Ecal)^{\ast \ast }$ given by the
%direct sum of the admissible Hermitian-Einstein metrics on the stable
%quotients. Therefore, in particular, Uhlenbeck limits are unique up to gauge.
%\end{remark}

%\begin{remark}
%It is straightforward to see that the hypotheses of Theorem \ref{thm:uhlenbeck} apply to the Yang-Mills
%flow (the results of \cite{DaskalWentworth04} and \cite{Sibley} also hold in this case). Hong
%and Tian \cite{HongTian04}, have proven that for a sequence of connections along the
%Yang-Mills flow, the convergence is in fact $C^{\infty }$ away from $Z^{%
%an}$, and that $Z^{an}$ is an analytic subvariety. We
%will explain this in more detail in Section \ref{Section4}
%\end{remark}

We now give a more precise definition of the analytic singular set.  For a
sequence of connections $A_{i}$ satisfying the hypotheses of Theorem \ref%
{thm:uhlenbeck} and with Uhlenbeck limit $A_\infty$, define (cf.\ \cite[eq.\
(3.1.4)]{Tian00}), 
\begin{equation*}
Z^{an}=\bigcap_{\sigma_0\geq \sigma>0}\biggl\{x\in X: \liminf_{i\to
\infty}\sigma^{4-2n}\int_{B_{\sigma}(x)}\left\vert F_{A_i}\right\vert
^{2}dvol_{\omega } \geq \varepsilon _{0}\biggr\}.
\end{equation*}
The numbers $\varepsilon_0$, $\sigma_0$, are those that appear in the $%
\varepsilon$-regularity theorem \cite{Uhlenbeck82a, Uhlenbeck} and depend
only on the geometry of $X$ (see also Section \ref{sec:chernclass} below).
The density function is defined by taking a weak limit of the Yang-Mills
measure: 
\begin{equation*}
|F_{A_i}|^2(x)\, dvol_\omega\longrightarrow |F_{A_\infty}|^2(x)\,
dvol_\omega+\Theta(x)\mathcal{H}^{2n-4}\bigr\lfloor{Z^{an}_b} 
\end{equation*}
For almost all $x\in Z^{an}$ with respect to $(2n-4)$-Hausdorff measure $%
\mathcal{H}^{2n-4}$, 
\begin{equation*}
\Theta (x)=\lim_{\sigma\rightarrow 0}\lim_{i\rightarrow \infty
}\sigma^{4-2n}\int_{B_{\sigma}(x)}\left\vert F_{A_i}\right\vert
^{2}dvol_{\omega }
\end{equation*}%
The closed subset of $Z^{an}$ defined by 
\begin{equation*}
Z_{b}^{an}=\overline{\{x\in X\mid \Theta (x)>0,\lim_{\sigma\rightarrow
0}\sigma^{4-2n}\int_{B_{\sigma}(x)}\left\vert F_{A_{\infty }}\right\vert
^{2}dvol_{\omega }=0\}}
\end{equation*}%
is called the \emph{blow-up locus} of the sequence $A_{i}$.

As noted in Remark \ref{rem:removable}, $A_\infty$ decomposes into a direct sum of admissible Hermitian-Yang-Mills connections. Using \cite[Prop.\ 5.1.2]{Tian00}, it follows  from the removable singularities theorem of Tao and Tian \cite{TaoTian04} that there is a gauge tranformation $g $
on $X- Z^{an}$, such that $g(A_\infty)$ extends smoothly over the blow-up
locus. Moreover, it can be shown that $\mathcal{H}^{2n-4}(\overline{Z^{an}-
Z_{b}^{an})}=0$. Therefore, we can express 
\begin{equation}  \label{eqn:ansingset}
Z^{an}=Z_{b}^{an}\cup \sing A_{\infty}
\end{equation}
and $\sing A_{\infty }$ is the $\mathcal{H}^{2n-4}$-measure zero set where $%
A_\infty$ is singular (see \cite[p.\ 223]{Tian00}).

%By Theorem $1.4$ of \cite{TY} (see also Theorem $2$ of \cite{BS}), $\sing A_{\infty }$ is exactly the singular set of the limiting sheaf $%
%\Ecal _{\infty }$, and by the main result of \cite{S}, this implies that%
%$
%\sing A_{\infty }=\sing \Gr (\Ecal)^{\ast \ast }
%$,
%so $\sing A_{\infty }$ is in particular a holomorphic subvariety of
%codimension at least $3$.

If the sequence $\{A_i\}$ happens to be a sequence of Hermitian-Yang-Mills
connections, or if it is a sequence along the Yang-Mills flow, then one can
say much more about the blow-up locus. Namely, we have the following
theorem, proven in the two different cases in \cite{Tian00} and \cite%
{HongTian04} respectively.

\begin{theorem}[Tian, Hong-Tian]
\label{thm:tian} If $A_{i}$ is either a sequence of Hermitian-Yang-Mills
connections, or a sequence $A_i=A_{t_i}$ of connections along the Yang-Mills
flow with $t_i\to \infty$, and $A_i$ has Uhlenbeck limit $A_\infty$, then
its blow-up locus $Z_{b}^{an}$ is a closed holomorphic subvariety of pure
codimension $2$. Furthermore, the density $\Theta $ is constant along each
of the irreducible codimension $2$ components $Z_{j}^{an}\subset Z^{an}$ and
there exist positive integers $m_{j}^{an}$ such that for any smooth $(2n-4)$%
-form $\Omega$, we have 
\begin{equation*}
\frac{1}{8\pi ^{2}}\lim_{i\rightarrow \infty }\int_{X} \Omega\wedge \Tr %
(F_{A_{i}}\wedge F_{A_{i}})=\frac{1}{8\pi ^{2}}\int_{X}\Omega\wedge\Tr%
(F_{A_{\infty }}\wedge F_{A_{\infty }})
+\sum_{j}m_{j}^{an}\int_{Z_{j}^{an}}\Omega .
\end{equation*}
\end{theorem}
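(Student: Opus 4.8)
The plan is to derive all three assertions by combining the monotonicity formula and $\varepsilon$-regularity for (almost) Yang-Mills connections with the structure theorem of King \cite{King71} (or Harvey-Shiffman \cite{HS}) for positive closed rectifiable currents. First I would record the soft analytic inputs. For a sequence along the flow — or of Hermitian-Yang-Mills connections — one has Price-type monotonicity for the scale-invariant energy $\sigma^{4-2n}\int_{B_\sigma(x)}|F_{A_i}|^2\,dvol_\omega$, with error terms controlled by $\|d_{A_i}^\ast F_{A_i}\|_{L^2}\to 0$ (and no error in the Hermitian-Yang-Mills case). Together with the $\varepsilon$-regularity theorem of \cite{Uhlenbeck82a,Uhlenbeck}, with its constants $\varepsilon_0,\sigma_0$, this yields: the Yang-Mills measures converge weakly, along a further subsequence, to $|F_{A_\infty}|^2\,dvol_\omega+\nu$ for a nonnegative Radon measure $\nu$ supported on $Z^{an}$; the density $\Theta(x)$ exists $\mathcal{H}^{2n-4}$-a.e.\ on $Z^{an}$, is upper semicontinuous, and is bounded below by $\varepsilon_0$ there; $Z^{an}$ has locally finite $(2n-4)$-Hausdorff measure; and, crucially in the flow case, $A_i\to A_\infty$ in $C^\infty_{loc}(X-Z^{an})$ — this last being Hong-Tian's improvement \cite{HongTian04} over weak $L^p_1$ convergence. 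Using the removable-singularities theorem \cite{TaoTian04,TianYang02} and $\mathcal{H}^{2n-4}(\overline{Z^{an}-Z_b^{an}})=0$, the stratum $\sing A_\infty$ has $\mathcal{H}^{2n-4}$-measure zero and carries no $(2,2)$-current mass, so only $Z_b^{an}$ is relevant for the stated identity.

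The geometric heart is to show that $\nu$ is a rectifiable $(2n-4)$-measure with complex approximate tangent planes, equivalently that the associated defect current is calibrated by $\omega^{n-2}/(n-2)!$. I would blow up at a generic point $x$ with $\Theta(x)>0$: rescaling the $A_i$ and passing to a limit produces a stationary bubble whose energy density is the blow-up of $\nu$, and monotonicity forces this limit to be a cone. The point special to the K\"ahler setting is that $F^{0,2}_{A_i}=0$ is preserved by the Yang-Mills flow and passes to every weak limit and every rescaled limit, so all bubbles are integrable Yang-Mills, i.e.\ Hermitian-Yang-Mills connections; for these the integrability condition forces curvature concentration along complex directions, so the tangent cone of the concentration set is a finite union of complex $(n-2)$-planes, the defect current is positive of bidimension $(n-2,n-2)$, and the normalized energy density is an integer multiple of $8\pi^2$. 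By the calibrated-geometry arguments of \cite{Tian00} (resp.\ \cite{HongTian04}) this gives rectifiability of $Z_b^{an}$ with a.e.\ complex tangent planes, integrality of $\Theta/8\pi^2$ a.e., and — via monotonicity and connectedness of the smooth stratum of each irreducible component — constancy of $\Theta$ along each $Z_j^{an}$. I expect this step to be the main obstacle: it is exactly where Tian's monotonicity and blow-up machinery and the persistence of integrability under rescaling must be marshalled, whereas everything before it is soft compactness and everything after it is an application of a known structure theorem.

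Finally I would pass from the measure-theoretic picture to the analytic one. Set $T$ to be the $(2,2)$-current appearing in the weak limit $\frac{1}{8\pi^2}\Tr(F_{A_i}\wedge F_{A_i})\to \frac{1}{8\pi^2}\Tr(F_{A_\infty}\wedge F_{A_\infty})+T$. Each $\frac{1}{8\pi^2}\Tr(F_{A_i}\wedge F_{A_i})$ is a smooth closed form lying in a fixed cohomology class, so the weak limit is a closed current; since $A_\infty$ is smooth near $Z_b^{an}$ after a gauge transformation and $\sing A_\infty$ has codimension too large to contribute, $\frac{1}{8\pi^2}\Tr(F_{A_\infty}\wedge F_{A_\infty})$ extends to a closed current on $X$, whence $T$ is closed. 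Positivity of $T$ and its bidimension $(n-2,n-2)$ are the content of the calibration statement above, and its rectifiability comes from that of $\nu$. Thus $T$ is a positive, closed, rectifiable current of bidimension $(n-2,n-2)$, and King's theorem \cite{King71} (or \cite{HS}) gives $T=\sum_j m_j^{an}[Z_j^{an}]$ with the $Z_j^{an}$ irreducible complex analytic subvarieties of pure codimension $2$ and $m_j^{an}\in\mathbb{Z}_{>0}$ (positive since $Z_j^{an}$ carries mass). Pairing with a test form $\Omega$ and unwinding the definition of $T$ yields exactly the displayed identity, while comparing $\nu=\Theta\,\mathcal{H}^{2n-4}\lfloor Z_b^{an}$ with $T=\sum_j m_j^{an}[Z_j^{an}]$ identifies $\Theta$ on $Z_j^{an}$ with $8\pi^2 m_j^{an}$, completing the proof.
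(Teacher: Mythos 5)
This statement is quoted in the paper as a known result of Tian \cite{Tian00} and Hong--Tian \cite{HongTian04} (together with \cite{TianYang02} and King/Harvey--Shiffman for the analyticity), and the paper gives no proof of its own. Your sketch is a faithful and correctly ordered reconstruction of the argument in those references --- monotonicity and $\varepsilon$-regularity for the defect measure, blow-up analysis plus persistence of integrability to get rectifiability, complex tangent planes and integrality of $\Theta/8\pi^2$, and then the King/Harvey--Shiffman structure theorem --- and you rightly flag the calibration/rectifiability step as the genuinely hard part, which is exactly what is proved in \cite{Tian00} and \cite{HongTian04}.
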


\noindent It follows that $Z^{an}$ is an analytic subvariety. We will call
the numbers$\ m_{j}^{an}$, the \emph{analytic multiplicities}. For more
details see \cite{Tian00}, \cite{TianYang02}, and \cite{HongTian04}.

%We also note the following
%
%\begin{lemma} \label{lem:strongapprox}
%The Bando-Siu admissible Hermitian-Yang-Mills connection on a stable reflexive sheaf is strongly approximated.
%\end{lemma}
%

\section{A Singular Bott-Chern Formula}

\subsection{Statement of results}

Throughout this section we consider holomorphic vector bundles $\mathcal{E}$
on a compact K\"ahler manifold $(X,\omega)$ with a \emph{general} filtration %
\eqref{eqn:filtration} by saturated subsheaves. As above, let $\Gr(\mathcal{E%
})$ denote the graded sheaf associated to the filtration, and $\Gr(\mathcal{E%
})^{\ast\ast}$ its sheaf theoretic double dual. Then $Z^{alg}$ will refer to
the algebraic singular set defined in Section \ref{sec:algsingset}. The
codimension $2$ components $Z_j^{alg}\subset Z^{alg}$ have multiplicities $%
m_j^{alg}$ as in Definition \ref{def:algmult}.

There are two key steps in the proof of Theorem \ref{thm:bottchern}. The
first is the cohomological statement of the following result which will be
proved in Section \ref{sec:grr}.

\begin{proposition}
\label{prop:key} Let $X$ be a compact, complex manifold, and $\mathcal{T}\to
X$ a torsion sheaf. Assume that $\supp \mathcal{T}$ has codimension $p$.
Denote by $Z_{j}\subset \supp \mathcal{T}$ the irreducible components of
codimension exactly $p$ and by $[Z_j]$ their corresponding homology classes.
Then for all $k<p$, $ch_{k}(\mathcal{T})=0$, and $ch_{p}(\mathcal{T})=%
\mathrm{PD}(\sum_{j}m_{Z_{j}}\left[ Z_{j}\right] )$ in $H^{2p}(X, \mathbb{Q })$.%
%EndExpansion )$ $($and in $H^{p,p}(X)\cap H^{2p}(X,\mathbb{Q })$ if $X$ is K%
%\"{a}hler$)$.
\end{proposition}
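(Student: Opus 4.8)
The plan is to reduce everything to the behavior of the Chern character on coherent sheaves supported in high codimension, and to localize along the top-dimensional components. First I would recall that on a compact complex manifold every coherent sheaf $\mathcal{T}$ admits, at least locally and in fact globally in the K\"ahler (or projective) setting, a finite locally free resolution, so that $ch(\mathcal{T})$ is well-defined in $H^{2\ast}(X,\mathbb{Q})$ via the alternating sum of the Chern characters of the resolving bundles; in the general complex-analytic case this is exactly where the generalization of Baum-Fulton-MacPherson due to Levy \cite{Levy87} is invoked, giving a Riemann-Roch transformation from analytic $K$-theory to rational homology compatible with proper pushforward. The vanishing $ch_k(\mathcal{T})=0$ for $k<p$ is then the standard statement that the Chern character of a sheaf supported in codimension $\geq p$ lives in $\bigoplus_{k\geq p}H^{2k}$; I would prove this by taking a Whitney-type stratification of $\supp\mathcal{T}$ and inducting on codimension, using that away from the top stratum the sheaf is (generically on each component) locally free of some rank over the reduced structure sheaf of that component.

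Next, for the leading term $ch_p(\mathcal{T})$, I would argue that it is supported, as a homology class, on $\supp\mathcal{T}$ and hence is a $\mathbb{Q}$-linear combination $\sum_j a_j[Z_j]$ of the fundamental classes of the codimension-$p$ components (the lower-dimensional pieces of $\supp\mathcal{T}$ cannot contribute in degree $2p$). The remaining task is to identify the coefficient $a_j$ with the multiplicity $m_{Z_j}=\operatorname{rank}\imath^\ast\mathcal{T}$ from Definition \ref{def:algmult}. For this I would pass to a transverse slice: pick a generic smooth point $z\in Z_j$ away from $\operatorname{sing}Z_j$, from the other components, and from the locus where $\imath^\ast\mathcal{T}$ fails to be locally free, and choose a codimension-$p$ complex submanifold $\Sigma$ through $z$ meeting $Z_j$ transversely at the single point $z$. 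Restricting a locally free resolution of $\mathcal{T}$ to $\Sigma$ computes $ch(\mathcal{T}|_\Sigma)$, and by the excess/intersection compatibility of the Riemann-Roch transformation (functoriality under the Gysin map for the regular embedding $\Sigma\hookrightarrow X$, together with the fact that $\Sigma\cap\supp\mathcal{T}=\{z\}$), one gets that the degree-$0$ part of $ch(\mathcal{T}|_\Sigma)$ — i.e.\ $\operatorname{rank}_{\mathcal{O}_{\Sigma,z}}$ of the finite-length module $(\mathcal{T}|_\Sigma)_z$ — equals the intersection number $a_j[Z_j]\cdot[\Sigma]=a_j$. Since a general point of $Z_j$ computes the generic rank $m_{Z_j}$, this yields $a_j=m_{Z_j}$.

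The main obstacle I anticipate is the identification in the last step, namely making precise the ``transverse slice computes the multiplicity'' principle in the analytic category: in the algebraic/projective case this is immediate from Grothendieck-Riemann-Roch applied to $\Sigma\hookrightarrow X$ and the additivity of $ch$ on the finite-length module $(\mathcal{T}|_\Sigma)_z=\mathcal{O}_{\Sigma}/(\mathcal{I}_{\mathcal{T}}|_\Sigma)$, but in general one must check that Levy's Riemann-Roch transformation is compatible with pullback to a transverse submanifold (equivalently, with the Gysin homomorphism for a regular embedding) and with the product structure, so that no excess-intersection correction terms appear — this is exactly where the genericity of $z$ and $\Sigma$ (transversality, local freeness of $\imath^\ast\mathcal{T}$ near $z$) is used to kill all lower-order contributions. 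A secondary technical point is handling the case where $\supp\mathcal{T}$ is non-reduced or where the components $Z_j$ themselves are singular; this is dealt with by the genericity of the chosen point $z$ and by the remarks preceding Definition \ref{def:algmult}, which ensure the generic rank of $\imath^\ast\mathcal{T}$ along $Z_j$ is well-defined.
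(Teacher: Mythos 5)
Your route to the leading coefficient is genuinely different from the paper's, and it has a gap at exactly the point you flag as ``the main obstacle.'' Levy's theorem supplies a natural transformation $\tau$ from analytic $K$-homology to rational homology that is covariant for \emph{proper pushforward}; it says nothing about compatibility with Gysin maps or with restriction to a transverse slice $\Sigma\hookrightarrow X$. In the algebraic category that compatibility is Verdier's Riemann--Roch for l.c.i.\ morphisms, but on a general compact complex manifold it is not available off the shelf, and you cannot even begin from a global holomorphic locally free resolution of $\mathcal{T}$: such resolutions can fail to exist even on compact K\"ahler manifolds (Voisin's counterexample, cited in the paper), contrary to your opening claim; this is precisely why the paper defines $ch(\mathcal{T})$ via real-analytic resolutions (Definition \ref{def:chernchar}). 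There is also a local problem with the slice step: restricting a resolution to $\Sigma$ computes the derived pullback, i.e.\ $\sum_i(-1)^i\dim\Tor_i^{\mathcal{O}_X}(\mathcal{T},\mathcal{O}_\Sigma)_z$, not $\dim(\mathcal{T}\otimes\mathcal{O}_\Sigma)_z$; and even the latter is a length, which need not equal the fibre rank $m_{Z_j}=\rank\imath^\ast\mathcal{T}$ of Definition \ref{def:algmult} unless one first shows that $\mathcal{T}$ is, generically along $Z_j$, a locally free module over the reduced structure sheaf. So the coefficient identification is not closed as written.

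The paper avoids all of this by staying inside the covariant functoriality Levy actually provides. The vanishing $ch_k(\mathcal{T})=0$ for $k<p$ comes from property $(3)$ (the homology Todd class of a sheaf supported on $Z$ is pushed forward from $H_\ast(Z)$, hence vanishes above degree $2\dim Z$) combined with Remark \ref{rmk:k} ($\tau(\mathcal{F})=ch(\mathcal{F})\cdot Td(X)\cap[X]$, established via Fulton's lemma) and an induction on $k$; your Whitney-stratification sketch would need this same global input, since local freeness on strata does not by itself kill a global cohomology class. For the top term, Proposition \ref{prop:components} computes $\tau(\mathcal{F})_{2\dim Z}$ by resolving singularities and flattening (Hironaka) so that $\pi^\ast\mathcal{F}/\mathrm{tor}(\pi^\ast\mathcal{F})$ becomes locally free upstairs, applying property $(1)$ there, pushing forward, and discarding all correction terms (higher direct images, torsion, the kernel and cokernel of $\mathcal{F}\to\pi_\ast\pi^\ast\mathcal{F}$) because they are supported in strictly smaller dimension. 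To salvage your slicing approach you would need to prove the Gysin compatibility of $\tau$ in the analytic category and the generic vanishing of the higher Tor's; as it stands, both are assumed rather than proved.
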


\noindent In other words, the $p$-th component of the Chern character of a
torsion sheaf with support in codimension $p$ is the the cohomology class of
the cycle $\left[ \supp\mathcal{T}\right] _{p}$, and all lower components
vanish. This result is probably well known, but we have not been able to find
a proof in the literature. We therefore give a proof here which will 
use the subsequent discussion of the Grothendieck-Riemann-Roch theorem
for complex spaces.

Note that on a non-projective compact, complex manifold, a coherent sheaf
does not in general have a global resolution by locally free coherent
analytic sheaves. See the appendix of \cite{Voisin02} for an explicit
counter-example. Therefore, we cannot define Chern classes directly in this
way. However, after tensoring with the sheaf $\mathcal{A}_{X}$ of germs of
real analytic functions on $X$ there is a resolution by real analytic vector
bundles (cf.\ \cite{Grauert58}).

\begin{definition}
\label{def:chernchar} Let $X$ be a compact, complex manifold and $\mathcal{F}%
\to X$ a coherent sheaf. Choose a global resolution: $0\to E_{r}\to
E_{r-1}\to \cdots\to E_{0}\to \mathcal{F}\otimes _{\mathcal{O}_{X}}\mathcal{A%
}_{X}\to 0 $, where the $E_{i}$ are real analytic complex vector bundles on
X. Then define 
\begin{equation*}
ch_p(\mathcal{F})=\sum_{i=0}^r(-1)^i ch_p(E_i) 
\end{equation*}
\end{definition}

\noindent One can show that this definition does not depend on the choice of
global resolution (cf.\ \cite{BorelSerre58}). The Chern characters appearing
in Proposition \ref{prop:key} are defined in this way. For other approaches
to Chern classes of coherent sheaves, see \cite{Green, Grivaux}.

For a smooth hermitian metric $h_0$ on $\mathcal{E}$, let $A_0=(\mathcal{E},
h_0)$ denote the Chern connection. Then by Chern-Weil theory the smooth,
closed $(2,2)$-form: $ch_{2}(\mathcal{E},h_0) = -(1/8\pi ^{2}) \Tr %
(F_{A_0}\wedge F_{A_0}) $, represents the Chern character $ch_2(\mathcal{E})$
in cohomology. For an \emph{admissible} metric $h$ on $\Gr(\mathcal{E})$,
let 
\begin{equation}  \label{eqn:ch2}
ch_{2}(\Gr(\mathcal{E}),h) = -\frac{1}{8\pi ^{2}} \Tr (F_{A}\wedge F_{A})
\end{equation}
where $A$ is the Chern connection of $(\Gr(\mathcal{E}),h)$ on its locally
free locus $X-Z^{alg}$. Note that since $F_A\in L^2$, eq.\ \eqref{eqn:ch2}
defines  a $(2,2)$-current on $X$ by setting 
\begin{equation}  \label{eqn:current-def}
ch_{2}(\Gr(\mathcal{E}),h)(\Omega)=-\frac{1}{8\pi^2}\int_{X}\Omega\wedge \Tr%
(F_{A}\wedge F_{A})
\end{equation}%
for any smooth $(2n-4)$-form $\Omega $. The second step in the proof of
Theorem \ref{thm:bottchern}  is the following result which will be proved in
Section \ref{sec:chernclass} below.

\begin{proposition}
\label{prop:ch2} Let $\mathcal{E}$ and $\Gr(\mathcal{E})$ be as in the
statement of Theorem \ref{thm:bottchern}. Then for any admissible metric $h$
on $\Gr(\mathcal{E})$, the smooth form $ch_{2}(\Gr(\mathcal{E}),h)$  on $%
X-Z^{alg}$ defined by \eqref{eqn:ch2},  extends as a closed $(2,2)$-current
on $X$. Moreover, this current represents $ch_{2}(\Gr(\mathcal{E}%
)^{\ast\ast})$ in rational cohomology.
\end{proposition}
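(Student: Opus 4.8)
The plan is to prove the two assertions — that $ch_2(\Gr(\mathcal{E}),h)$ extends as a closed current, and that this extended current represents $ch_2(\Gr(\mathcal{E})^{\ast\ast})$ — more or less simultaneously, by comparing the admissible metric $h$ on $\Gr(\mathcal{E})$ with a \emph{smooth} metric on the locally free sheaf $\Gr(\mathcal{E})^{\ast\ast}$ away from $\sing\Gr(\mathcal{E})^{\ast\ast}$, and then handling the contribution of $Z^{alg}$ by a capacity/cutoff argument. First I would fix a smooth hermitian metric $h'$ on the reflexive sheaf $\Gr(\mathcal{E})^{\ast\ast}$, locally free outside a subvariety $S=\sing\Gr(\mathcal{E})^{\ast\ast}$ of codimension $\geq 3$; let $A'$ be its Chern connection. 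On $X-Z^{alg}$ both $h$ and $h'$ are smooth metrics on the \emph{same} bundle $\Gr(\mathcal{E})^{\ast\ast}\bigr|_{X-Z^{alg}}$ (here I use that $\Gr(\mathcal{E})=\Gr(\mathcal{E})^{\ast\ast}$ there, since $Z^{alg}=\supp(\Gr(\mathcal{E})^{\ast\ast}/\Gr(\mathcal{E}))\cup\sing\Gr(\mathcal{E})^{\ast\ast}$). The ordinary (smooth) Bott-Chern theory on $X-Z^{alg}$ gives a secondary form $\psi$ with $ch_2(\Gr(\mathcal{E}),h)-ch_2(\Gr(\mathcal{E})^{\ast\ast},h')=dd^c\psi$ there. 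So the problem splits into two pieces: (a) across the high-codimension set $S$, where $h'$ is already smooth and the standard theory applies; (b) across the codimension-$2$ part of $Z^{alg}$, where one metric degenerates but the underlying bundle is still there and the curvature is $L^2$.

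For the closedness, the key analytic input is that $F_A\in L^2$ for an admissible metric, so $\Tr(F_A\wedge F_A)\in L^1$ and the current \eqref{eqn:current-def} is well-defined; I then need $dd^c$ of it (i.e.\ $d$ of it) to vanish \emph{as a current} on all of $X$, not just on $X-Z^{alg}$ where it is obvious from Chern-Weil. The standard device — and this is exactly where I would invoke the monotonicity formula and $L^p$-estimates of Uhlenbeck \cite{Uhlenbeck} together with Tian's argument from \cite{Tian00} — is to choose logarithmic cutoff functions $\chi_\varepsilon$ supported near $Z^{alg}$, equal to $1$ on an $\varepsilon$-neighborhood and vanishing outside a $2\varepsilon$-neighborhood (or a capacity-type cutoff), with $\|d\chi_\varepsilon\|_{L^2}\to 0$; since $Z^{alg}$ has codimension $\geq 2$, i.e.\ real codimension $\geq 4$, such cutoffs exist. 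Then for a test form $\Omega$,
\[
\int_X d\Omega\wedge\Tr(F_A\wedge F_A)
=\lim_{\varepsilon\to 0}\int_X (1-\chi_\varepsilon)\,d\Omega\wedge\Tr(F_A\wedge F_A),
\]
and on the support of $1-\chi_\varepsilon$ one integrates by parts using the Bianchi identity $d_A F_A=0$ (so $d\Tr(F_A\wedge F_A)=0$ pointwise), producing only a boundary/error term of the form $\int d\chi_\varepsilon\wedge\Omega\wedge\Tr(F_A\wedge F_A)$, which is bounded by $\|d\chi_\varepsilon\|_{L^2}\|F_A\|_{L^2}^2\|\Omega\|_{L^\infty}\to 0$. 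This gives $d\,ch_2(\Gr(\mathcal{E}),h)=0$ as a current. The subtlety I expect to be the main obstacle is making the integration by parts rigorous near $Z^{alg}$: the connection $1$-form and $A$ itself are only in $L^p_{1,loc}$ there (by Bando-Siu, Theorem \ref{thm:bando-siu}(2)), not smooth, so one must either work on the smooth locus and pass to the limit carefully, or quote the local regularity/removable-singularity results directly. Uhlenbeck's $\varepsilon$-regularity and monotonicity are precisely what control the curvature concentration on the relevant codimension-$4$ scale and justify the cutoff estimate.

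For the cohomology class, the point is that once $ch_2(\Gr(\mathcal{E}),h)$ is a closed current, the secondary form $\psi$ above (defined and smooth on $X-Z^{alg}$) should extend to an $L^1_{loc}$ current on $X$ — again by the same $L^2$ bound on the curvatures of $h$ and $h'$ together with the capacity argument — so that $ch_2(\Gr(\mathcal{E}),h)-ch_2(\Gr(\mathcal{E})^{\ast\ast},h')=dd^c\psi$ holds \emph{globally} as currents. Since $ch_2(\Gr(\mathcal{E})^{\ast\ast},h')$ is a Chern-Weil representative of $ch_2(\Gr(\mathcal{E})^{\ast\ast})$ — the metric $h'$ being smooth, and $\Gr(\mathcal{E})^{\ast\ast}$ being a genuine locally free sheaf outside codimension $\geq 3$, where again a cutoff argument extends the usual Chern-Weil identity over $S$ (this is the easier half, since nothing degenerates) — the displayed equation exhibits the two currents as cohomologous, hence $ch_2(\Gr(\mathcal{E}),h)$ represents $ch_2(\Gr(\mathcal{E})^{\ast\ast})$ in $H^4(X,\mathbb{R})$, and in fact in rational cohomology since $ch_2(\Gr(\mathcal{E})^{\ast\ast})$ is rational by Definition \ref{def:chernchar}. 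I would organize the writeup so that the cutoff/capacity estimate is proved once as a lemma and then applied three times: to closedness, to the global validity of the Bott-Chern transgression, and to the Chern-Weil identity for $h'$ across $S$.
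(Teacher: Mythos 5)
Your overall shape for the closedness part (shrinking neighborhoods of $Z^{alg}$, $L^2$ curvature, input from Uhlenbeck and Tian) matches the paper's, but the estimate you actually write down does not close, and the gap is precisely where the hard work lives. The bound $\bigl|\int d\chi_\varepsilon\wedge\Omega\wedge\Tr(F_A\wedge F_A)\bigr|\leq \|d\chi_\varepsilon\|_{L^2}\|F_A\|_{L^2}^2\|\Omega\|_{L^\infty}$ is not a valid H\"older estimate: $|F_A|^2$ is only in $L^1$, so the honest bound is $\|\Omega\|_\infty\int|d\chi_\varepsilon|\,|F_A|^2$, and with $|d\chi_\varepsilon|\sim\varepsilon^{-1}$ on an $\varepsilon$-neighborhood of a real-codimension-$4$ set this requires $\int_{\mathcal{N}_\varepsilon(Z^{alg})}|F_A|^2=o(\varepsilon)$; a logarithmic cutoff still needs a quantitative decay rate. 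Neither admissibility nor the monotonicity formula supplies such a rate (monotonicity gives $\int_{B_\sigma(x)}|F_A|^2\leq C\sigma^{2n-4}$, hence only $\int_{\mathcal{N}_\sigma}|F_A|^2=O(1)$). The paper's mechanism is structurally different: $\varepsilon$-regularity and the $L^p$ estimate produce Uhlenbeck gauges in which one replaces $\Tr(F_A\wedge F_A)$ by $dCS(A)$ with the pointwise bound $|CS(A)|\lesssim \sigma^{1-2n}\int_{B_{c\sigma}}|F_A|^2+\sigma H_0^2+\sigma|F_A|^2$ on shells of width $\sigma$; the factor $\sigma^{1-2n}$ against the shell volume $\sigma^{2n}$ and the $\sigma^{-1}$ from the cutoff cancel exactly, so the boundary term is controlled by $\int_{\mathcal{N}_{C\sigma}}|F_A|^2\to 0$ by absolute continuity alone, with no rate needed. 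You also need to excise the set $E_{\sigma,\varepsilon}$ where the local energy is not small (so that the good gauges exist at all), which your sketch omits.

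For the cohomology statement your route is genuinely different from the paper's and contains an unproven step of essentially the same difficulty as the proposition. You must show that the Bott--Chern transgression $\psi$ between $h$ and $h'$, smooth on $X-Z^{alg}$, extends to an $L^1_{loc}$ current with $dd^c\psi$ equal to the difference of the extended Chern currents and, crucially, with \emph{no residue current supported on the codimension-$2$ part of $Z^{alg}$}. A residue there would shift the class by exactly the kind of codimension-$2$ cycle the proposition is meant to pin down, so it cannot be waved away; and the ``same capacity argument'' inherits the problem above, now applied to $\psi$, which is not even obviously locally integrable near $Z^{alg}$. (Separately, a smooth metric $h'$ exists only on the locally free locus of $\Gr(\mathcal{E})^{\ast\ast}$, and an arbitrary such metric need not have $L^2$ curvature near $\sing\Gr(\mathcal{E})^{\ast\ast}$.) The paper avoids all transgression across $Z^{alg}$: it represents $ch_2(\Gr(\mathcal{E})^{\ast\ast})$ by the real-analytic resolution of Definition \ref{def:chernchar}; it pairs only with forms $\Omega$ Poincar\'e dual to $4$-cycles pushed off $\sing\Gr(\mathcal{E})^{\ast\ast}$ (possible since that set has real codimension $\geq 6$) and chosen compactly supported in its complement via the Thom isomorphism; and it uses Bando--Siu to approximate $h$ by smooth metrics converging in $L^p_{2,loc}$, so that the curvatures converge in $L^2_{loc}$ and the exactness of the resolution off the singular set gives the identity of cohomology classes directly.
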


Assuming the two results above, we have the

\begin{proof}[Proof of Theorem \protect\ref{thm:bottchern}]
Consider the exact sequence%
\begin{equation*}
0\longrightarrow \Gr(\mathcal{E}) \longrightarrow \Gr(\mathcal{E})^{\ast
\ast }\longrightarrow \Gr(\mathcal{E})^{\ast \ast }/\Gr(\mathcal{E})
\longrightarrow 0.
\end{equation*}%
Then by the additivity of $ch_{2}$ over exact sequences we have 
\begin{equation*}
ch_{2}(\Gr(\mathcal{E})^{\ast \ast }/\Gr(\mathcal{E})) =ch_{2}(\Gr(\mathcal{E%
})^{\ast \ast })-ch_{2}(\Gr(\mathcal{E})) =ch_{2}(\Gr(\mathcal{E})^{\ast
\ast })-ch_{2}(\mathcal{E})\ , 
\end{equation*}
where we have used that $ch_{2}(\Gr(\mathcal{E}))=ch_{2}(\mathcal{E})$, by
the definition of $ch_{2}$ and additivity. Applying Proposition \ref%
{prop:ch2} to the right hand side, and  Proposition \ref{prop:key} to the
left hand side of the equation above we obtain 
\begin{equation*}
\frac{1}{8\pi ^{2}}\left( \Tr (F_{A_0}\wedge F_{A_0})-\Tr (F_{A}\wedge
F_{A})\right) =\sum\limits_{j}m_{j}^{alg}Z_{j}^{alg}+dd^{c}\Psi
\end{equation*}%
for some $(1,1)$-current $\Psi$ by the $dd^{c}$-lemma for currents. By
elliptic regularity, $\Psi$ may be taken to be smooth away from $\sing \Gr(\mathcal{E}%
)=Z^{alg}$. %We know that $Z_{j}^{alg}\subset Z^{an
%}$, so the current determined by $Z_{j}^{alg}$ is supported in $%
%Z^{an}$. Hence, on $X-Z^{an}$ the above equation becomes
%the equation 
%\begin{equation*}
%\frac{1}{8\pi ^{2}}\left( \Tr (F_{A_{i}}\wedge F_{A_{i}})-\Tr
%(F_{A^{\ast \ast }}\wedge F_{A^{\ast \ast }})\right) =dd^{c}\Psi _{i}
%\end{equation*}%
%of smooth forms. In other words $\Psi _{i}$ is the usual second Bott-Chern
%form for the smooth connections $A_{i}$ and $A^{\ast \ast }$ (or rather
%their associated metrics). If $A_{i}\rightarrow A^{\ast \ast }$ locally
%smoothly on $X- Z^{an}$, $\Psi _{i}\rightarrow 0$ locally smoothly.
\end{proof}

We also note that Theorems \ref{thm:bottchern} and \ref{thm:tian} combine to
give the following

\begin{corollary}
\label{cor:cohomologous} The currents $\sum_j m_j^{an} Z_j^{an}$ and $\sum_k
m_k^{alg} Z_k^{alg}$ are cohomologous.
\end{corollary}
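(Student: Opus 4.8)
The plan is to prove the corollary by testing both cycles against an arbitrary smooth closed $(2n-4)$-form $\Omega$ and checking that the resulting numbers coincide. The currents of integration $\sum_j m_j^{an}Z_j^{an}$ and $\sum_k m_k^{alg}Z_k^{alg}$ are $d$-closed $(2,2)$-currents, since $Z^{an}$ and $Z^{alg}$ are analytic subvarieties (the former by Theorem \ref{thm:tian}, the latter by definition) and integration over an analytic cycle defines a closed current; because a closed current is determined in $H^{4}(X,\RBbb)$ by its pairings against closed $(2n-4)$-forms, equality of these pairings for all such $\Omega$ implies the two currents are cohomologous.

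First I would rewrite the analytic side via Theorem \ref{thm:tian}. Applied to the sequence $A_i=A_{t_i}$ along the Yang-Mills flow with Uhlenbeck limit $A_\infty$, it gives
\[
\sum_j m_j^{an}\int_{Z_j^{an}}\Omega=\frac{1}{8\pi^2}\lim_{i\to\infty}\int_X\Omega\wedge\Tr(F_{A_{t_i}}\wedge F_{A_{t_i}})-\frac{1}{8\pi^2}\int_X\Omega\wedge\Tr(F_{A_\infty}\wedge F_{A_\infty}).
\]
The Yang-Mills flow does not change the underlying smooth bundle, so each $-(8\pi^2)^{-1}\Tr(F_{A_{t_i}}\wedge F_{A_{t_i}})$ is a Chern--Weil representative of $ch_2(\mathcal{E})$; hence for closed $\Omega$ the first term is independent of $i$ and equals $-\int_X\Omega\wedge ch_2(\mathcal{E})$. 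For the second term, Theorem \ref{thm:dws} identifies $\mathcal{E}_\infty$ with $\Gr(\mathcal{E})^{\ast\ast}$, and the restriction of the admissible metric $h_\infty$ to $X-Z^{alg}$ --- where $\Gr(\mathcal{E})$ is locally free and coincides with $\Gr(\mathcal{E})^{\ast\ast}$ --- is an admissible metric on $\Gr(\mathcal{E})$. Since $F_{A_\infty}\in L^2$ and both $Z^{an}$ and $Z^{alg}$ have locally finite $(2n-4)$-dimensional Hausdorff measure, hence Lebesgue measure zero, the integral over $X$ agrees with the current \eqref{eqn:current-def}, which by Proposition \ref{prop:ch2} is closed and represents $ch_2(\Gr(\mathcal{E})^{\ast\ast})$. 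Thus the second term equals $-\int_X\Omega\wedge ch_2(\Gr(\mathcal{E})^{\ast\ast})$, and altogether
\[
\sum_j m_j^{an}\int_{Z_j^{an}}\Omega=\int_X\Omega\wedge\bigl(ch_2(\Gr(\mathcal{E})^{\ast\ast})-ch_2(\mathcal{E})\bigr).
\]

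Next I would pair the singular Bott--Chern identity \eqref{eqn:bc-current} of Theorem \ref{thm:bottchern} with the same $\Omega$: the $dd^c\Psi$ term drops out because $\Omega$ is closed, $ch_2(\mathcal{E},h_0)$ represents $ch_2(\mathcal{E})$, and by Proposition \ref{prop:ch2} the current $ch_2(\Gr(\mathcal{E}),h)$ represents $ch_2(\Gr(\mathcal{E})^{\ast\ast})$; hence the right-hand side of the last display equals $\sum_k m_k^{alg}\int_{Z_k^{alg}}\Omega$. Combining the two computations yields $\sum_j m_j^{an}\int_{Z_j^{an}}\Omega=\sum_k m_k^{alg}\int_{Z_k^{alg}}\Omega$ for every closed $(2n-4)$-form $\Omega$, which is the assertion.

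The argument involves no essentially new difficulty; all the weight is carried by Theorems \ref{thm:bottchern}, \ref{thm:tian}, \ref{thm:dws} and Proposition \ref{prop:ch2}. The one step needing genuine care is the identification, in the second paragraph, of the integral $\int_X\Omega\wedge\Tr(F_{A_\infty}\wedge F_{A_\infty})$ from Theorem \ref{thm:tian} --- in which $A_\infty$ is a priori defined only on $X-Z^{an}$ --- with the current \eqref{eqn:current-def} associated to the admissible metric on $\Gr(\mathcal{E})$ over $X-Z^{alg}$. This relies on the isomorphism $\mathcal{E}_\infty\cong\Gr(\mathcal{E})^{\ast\ast}$ of Theorem \ref{thm:dws} together with the vanishing of the Lebesgue measure of $Z^{an}\cup Z^{alg}$, so that the $L^1$-integrand may be integrated over either dense open subset with the same value; in particular this step does not presuppose the set-theoretic equality $Z^{an}=Z^{alg}$, which is proved only later.
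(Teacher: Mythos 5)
Your proof is correct and follows exactly the route the paper intends: the paper offers no written argument beyond noting that Theorems \ref{thm:bottchern} and \ref{thm:tian} combine to give the corollary, and your write-up supplies precisely the missing details (pairing both identities against a closed test form, using Theorem \ref{thm:dws} and Proposition \ref{prop:ch2} to identify the $A_\infty$-term with $ch_2(\Gr(\mathcal{E})^{\ast\ast})$, and the measure-zero observation so that integrating over $X-Z^{an}$ versus $X-Z^{alg}$ is harmless). The care you take with that last identification, without presupposing $Z^{an}=Z^{alg}$, is exactly right.
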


\subsection{Levy's Grothendieck-Riemann-Roch Theorem and the cycle $ch_{p}(%
\mathcal{T})$}

\label{sec:grr}

To prove Proposition \ref{prop:key} we recall a very general version of the
Riemann-Roch theorem for complex spaces. One may think of this theorem as
translating algebraic (holomorphic) data into topological data. It is
expressed in terms of $K$-theory, so we recall some basic definitions. For
the subsequent discussion $X$ will denote a compact, complex space. Note
that for such a space there is a topological embedding $X\hookrightarrow 
\CBbb^{N}$.

Let $K_{0}^{\mathrm{hol}}(X)$ denote the Grothendieck group of the category
of coherent analytic sheaves on $X$, that is, the free abelian group
generated by isomorphism classes of coherent sheaves modulo the relation
given by exact sequences. We will write $K_{0}^{\mathrm{top}}(X)$ for the
homology $K $-theory of the topological space underlying $X$. This is the
homology theory corresponding to the better known topological $K$-theory $K_{%
\mathrm{top}}^{0}(X)$ given by the Grothendieck group of the category of
topological vector bundles. The group $K_{0}^{\mathrm{top}}(X)$ may be
defined in this case by choosing an embedding $X\hookrightarrow \CBbb^{N}$ and declaring $K_{0}^{\mathrm{top}}(X)=K_{\mathrm{top}}^{0}(%
\CBbb^{N},\CBbb^{N}-X)$, where the group on the
right hand side is the usual relative $K$-theory (see for example \cite{BFM2}%
). For a proper map $f:X\longrightarrow Y $, we can also define a
pushforward map $f_{\ast }:K_{0}^{\mathrm{top}}(X)\rightarrow K_{0}^{\mathrm{%
top}}(Y)$ (see \cite{BFM2}), by factoring $f$ as an inclusion composed with
a projection.

With all of this understood, the version of the Grothendieck-Riemann-Roch
theorem proven by Levy \cite{Levy87} states that there is a natural
transformation of functors $\alpha $ from $K_{0}^{\mathrm{hol}}$ to $K_{0}^{%
\mathrm{top}}$. Explicitly, this means that for any two compact complex
spaces $X$ and $Y$, there are maps 
\begin{equation*}
\alpha _{X}:K_{0}^{\mathrm{hol}}(X)\longrightarrow K_{0}^{\mathrm{top} }(X)\
,\qquad \alpha _{Y}:K_{0}^{\mathrm{hol}}(Y)\longrightarrow K_{0}^{\mathrm{top%
} }(Y)
\end{equation*}%
such that for any proper morphism $f:X\rightarrow Y$ the following diagram
commutes:%
\begin{equation*}
\xymatrix{ K_{0}^{{\rm hol}}(X) \ar[r]^{\alpha _{X}} \ar[d]_{f_{!}} &
K_{0}^{{\rm top}}(X) \ar[d]^{f_{\ast } } \\ K_{0}^{{\rm hol}}(Y)
\ar[r]^{\alpha _{Y}} & K_{0}^{{\rm top}}(Y) }
\end{equation*}%
Here $f_{!}$ is Grothendieck's direct image homomorphism given by 
\begin{equation*}
f_{!}(\left[ \mathcal{F}\right] )=\sum_{i}(-1)^{i}\left[ R^{i}f_{\ast }%
\mathcal{F}\right] ,
\end{equation*}%
and $f_{\ast }$ is the pushforward map in $K$-theory. We also have the usual
Chern character%
\begin{equation*}
ch^{\ast }:K_{\mathrm{top}}^{0}(\CBbb^{N},\CBbb
^{N}-X)\longrightarrow H^{2\ast }(\CBbb^{N},\CBbb
^{N}-X,\QBbb ).
\end{equation*}%
We may furthermore define an homology Chern character: 
\begin{equation*}
ch_{\ast }:K_{0}^{\mathrm{top}}(X)\longrightarrow H_{2\ast }(X,\QBbb ),
\end{equation*}%
by taking a topological embedding $X\hookrightarrow \CBbb^{N}$
and then composing the maps:%
\begin{equation*}
K_{0}^{\mathrm{top}}(X)=K_{\mathrm{top}}^{0}(\CBbb^{N},\CBbb^{N}-X)\overset{ch^{\ast }}{\xrightarrow{\hspace*{.75cm}} }H^{2\ast }(%
\CBbb^{N},\CBbb^{N}-X,\QBbb )\cong
H_{2\ast }(X,\QBbb ),
\end{equation*}%
where the last isomorphism is Lefschetz duality.

If $X$ is nonsingular, so that Poincar\'{e} duality gives an isomorphism $%
K_{0}^{\mathrm{top}}(X)\cong K_{\mathrm{top}}^{0}(X)$, then the relationship
between the homology Chern character and the ordinary Chern character $%
ch^{\ast }:K_{\mathrm{top}}^{0}(X)\longrightarrow H^{2\ast }(X,\QBbb )$ is given by:%
\begin{equation*}
ch_{\ast }(\mathrm{PD}\,\eta )=\mathrm{PD}[ch^{\ast }(\eta )\cdot Td(X)].
\end{equation*}%
\ \ \ 

The homology Chern character is also a natural transformation with respect
to the pushforward maps. By composition there is a natural transformation of
functors 
\begin{equation*}
\tau =ch_{\ast }\circ \alpha :K_{0}^{\mathrm{hol}}(X)\rightarrow H_{2\ast
}(X,\mathbb{Q})
\end{equation*}
that satisfies the corresponding naturality property, i.e.\ for any proper
map of complex spaces $f:X\rightarrow Y$: $\tau \circ f_{!}=f_{\ast }\circ
\tau $. Here $f_{!}$ is as before, and $f_{\ast }$ is the usual induced map
in homology.

For a coherent sheaf $\mathcal{F}$, $\tau (\mathcal{F})$ is called the  
\emph{homology Todd class} of $\mathcal{F}$ and satisfies a number of
important properties.

$(1)$ If $\mathcal{F}$ is locally free and $X$ is smooth, then $\tau (%
\mathcal{F})$ is the Poincar\'e dual to 
\begin{equation*}
ch(\mathcal{F})\cdot Td(X)\in H^{2\ast }(X,\QBbb ),
\end{equation*}
and in particular $\tau (\mathcal{O}_{X})=\mathrm{PD}(Td(X))$.

$(2)$ For an embedding $\imath :X\hookrightarrow Y$, we have $\tau (\imath
_{\ast }\mathcal{F})=\imath _{\ast }(\tau (\mathcal{F}))$.

$(3)$ If the support $\supp\mathcal{F}$ has dimension $k$, then the
components of $\tau (\mathcal{F})$ in (real) dimensions $r>2k$ vanish.

$(4)$ For any exact sequence of sheaves: $0\to \mathcal{G}\to \mathcal{F}\to%
\mathcal{H }\to 0$, we have $\tau (\mathcal{F})=\tau (\mathcal{G})+\tau (%
\mathcal{H})$.

Property $(4)$ follows from \cite[Lemma 3.4(b)]{Levy87} together with the
additivity of the homology Chern character. For $\mathcal{F}$ locally free,
property $(1)$ is a restatement of of \cite[Lemma 3.4(c)]{Levy87} as
follows. The statement there is that for $\mathcal{F}$ locally free, $\alpha
(\mathcal{F})$ is Poincar\'{e} dual to the topological vector bundle
corresponding to $\mathcal{F}$ in $K_{\mathrm{top}}^{0}(X)$, then $(1)$
follows by applying $ch_{\ast }$. Property $(2)$ follows from the naturality
of $\tau $ and the fact that the higher direct images of an embedding
vanish. Property $3$ follows from dimensional considerations. Indeed, since $%
\supp\mathcal{F}$ is an analytic subvariety, it can be triangulated, and if
it has (real) dimension $2k$ the homology in higher dimensions will be zero.
Hence, $\tau $ evaluated on the restriction of $\mathcal{F}$ to $\supp%
\mathcal{F}$ has zero components in dimension $>2k$. This combined with
property $(2)$ applied to the embedding $\imath :\supp\mathcal{F}%
\hookrightarrow X$ proves $(3)$.

Now we have the following important consequence of these properties.

\begin{proposition}
\label{prop:components} For a coherent sheaf $\mathcal{F}$ on a complex
manifold, the component of $\tau (\mathcal{F})$ in degree $2\dim \supp 
\mathcal{F}$ is the homology class of the cycle%
\begin{equation*}
\sum_{j}\left( \rank _{\mathcal{O}_{Z_{j}}}\mathcal{F}\bigr|_{ Z_{j}}\right)
[Z_{j}]=\sum_{j}m_{Z_{j}}[Z_{j}]
\end{equation*}%
where the $Z_{j}$ are the irreducible components of $\supp \mathcal{F}$ of
top dimension. \ \ \ \ \ \ \ 
\end{proposition}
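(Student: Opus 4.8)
The plan is to reduce the statement to a local computation on $\supp\mathcal{F}$ and to exploit the additivity property $(4)$ of $\tau$ to strip away contributions that live in lower dimension. Let $Z=\supp\mathcal{F}$, let $k=\dim Z$, and let $Z_1,\ldots,Z_r$ be the irreducible components of dimension exactly $k$; all other components of $Z$ have dimension $<k$. First I would observe, via property $(2)$ applied to the embedding $\imath:Z\hookrightarrow X$, that it suffices to compute the degree-$2k$ component of $\tau(\mathcal{F})$ regarding $\mathcal{F}$ as a coherent sheaf on the complex space $Z$. On $Z$, consider the maximal torsion subsheaf $\mathcal{F}'\subset\mathcal{F}$ supported on the union of lower-dimensional components together with $\sing Z$ and $\sing\imath^\ast\mathcal{F}$; more precisely, let $W=\bigcup_{\dim Z_j<k}Z_j\ \cup\ \sing Z\ \cup\ \sing(\imath^\ast\mathcal{F})$, which is a closed analytic subset of $Z$ of dimension $<k$, and let $\mathcal{F}'$ be the subsheaf of sections of $\mathcal{F}$ supported on $W$. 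Then $\mathcal{F}/\mathcal{F}'$ is a coherent sheaf on $Z$ which, restricted to $Z-W$, is locally free of rank $m_{Z_j}$ on the (connected) piece $(Z_j-W)$ of the smooth locus. By property $(4)$, $\tau(\mathcal{F})=\tau(\mathcal{F}')+\tau(\mathcal{F}/\mathcal{F}')$, and by property $(3)$ applied to $\mathcal{F}'$ (whose support $W$ has dimension $<k$) the class $\tau(\mathcal{F}')$ has no component in degree $2k$. Hence the degree-$2k$ part of $\tau(\mathcal{F})$ equals that of $\tau(\mathcal{F}/\mathcal{F}')$, and we may as well assume from the start that $\imath^\ast\mathcal{F}$ is torsion-free of constant rank $m_{Z_j}$ near the generic point of each $Z_j$ and has no embedded or lower-dimensional support.

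Next I would handle each irreducible component separately. Writing $\mathcal{F}=\bigoplus$ is not available, but again using additivity over the (finite) filtration whose successive quotients are supported on the individual closures $\overline{Z_j}$ — concretely, filter by the subsheaves of sections vanishing on all but the first $s$ components — reduces us to the case where $Z$ is irreducible of dimension $k$, $\mathcal{F}$ is a torsion-free $\mathcal{O}_Z$-module of generic rank $m=m_Z$. Let $\nu:\widetilde Z\to Z$ be a resolution of singularities (Hironaka), which exists for complex spaces; $\widetilde Z$ is a smooth compact complex manifold of dimension $k$. Since $\nu$ is a proper morphism and a biholomorphism over $Z-\sing Z$, the naturality $\tau\circ\nu_!=\nu_\ast\circ\tau$ applies. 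On $\widetilde Z$, choose a coherent sheaf $\widetilde{\mathcal{F}}$ with $\nu_\ast\widetilde{\mathcal{F}}$ agreeing with $\mathcal{F}$ generically — for instance, the saturation/torsion-free pullback $(\nu^\ast\mathcal{F})/\mathrm{tor}$, which is torsion-free of rank $m$ on the manifold $\widetilde Z$, hence has a well-defined $ch(\widetilde{\mathcal{F}})$ with $ch_0=m$. By property $(1)$, $\tau(\widetilde{\mathcal{F}})=\mathrm{PD}(ch(\widetilde{\mathcal{F}})\cdot Td(\widetilde Z))$, and the top-degree (i.e.\ degree $2k$, which on a $k$-dimensional manifold is the degree-$0$ cohomology times the fundamental class) component of $ch(\widetilde{\mathcal{F}})\cdot Td(\widetilde Z)$ is exactly $ch_0(\widetilde{\mathcal{F}})=m$. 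Thus the degree-$2k$ component of $\tau(\widetilde{\mathcal{F}})$ is $m[\widetilde Z]$, and pushing forward by $\nu_\ast$ — which sends $[\widetilde Z]$ to $[Z]$ — gives $m[Z]$ for the degree-$2k$ part of $\nu_\ast\tau(\widetilde{\mathcal{F}})=\tau(\nu_!\widetilde{\mathcal{F}})$. The last point to check is that $\tau(\nu_!\widetilde{\mathcal{F}})$ and $\tau(\mathcal{F})$ have the same degree-$2k$ component: the difference $\nu_!\widetilde{\mathcal{F}}-\mathcal{F}$ in $K_0^{\mathrm{hol}}(Z)$ is represented by an alternating sum of sheaves supported on $\sing Z$ (the $R^{>0}\nu_\ast$ terms and the generic comparison), all of which have support of dimension $<k$, so by property $(3)$ they contribute nothing in degree $2k$.

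The main obstacle — and the step requiring the most care — is the very last comparison: controlling $\nu_!\widetilde{\mathcal{F}}-\mathcal{F}$ and verifying that the generic isomorphism over $Z-\sing Z$ genuinely forces the discrepancy to be a class supported in lower dimension. One clean way to organize this, avoiding resolution of singularities altogether, is to argue entirely on $Z-W$: the homology class in $H_{2k}(Z,\mathbb{Q})$ is determined by its restriction to $Z-W$ (since $W$ has real dimension $<2k$, the restriction map $H_{2k}(Z)\to H_{2k}^{\mathrm{BM}}(Z-W)$ is injective after tensoring with $\mathbb{Q}$, by the long exact sequence of the pair), and on the complex manifold $Z-W$ the sheaf $\mathcal{F}$ is an honest vector bundle of rank $m_{Z_j}$ over each connected component $Z_j-W$, whence property $(1)$ directly gives that the top Borel–Moore homology component of $\tau(\mathcal{F})|_{Z-W}$ is $\sum_j m_{Z_j}[Z_j-W]$. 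The nontrivial ingredient is then the excision/compatibility statement that the degree-$2k$ component of $\tau$ is "local" in this sense — i.e.\ compatible with restriction to the Zariski-open $Z-W$ — which follows from property $(4)$ exactly as in the first paragraph (the kernel and cokernel of $\mathcal{F}\to j_\ast j^\ast\mathcal{F}$ for $j:Z-W\hookrightarrow Z$ are supported on $W$). I would present the argument in this second form, since it only uses the four listed properties of $\tau$ and elementary facts about Borel–Moore homology of analytic sets, and does not invoke Hironaka.
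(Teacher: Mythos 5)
Your first (Hironaka-based) sketch is essentially the route the paper takes, but since you say you would actually present the second, ``excision'' form, let me address that one: it has a genuine gap. The locality statement you need --- that the degree-$2k$ part of $\tau(\mathcal{F})$ can be computed by applying property $(1)$ on the noncompact manifold $Z-W$ and comparing in Borel--Moore homology --- does not follow from property $(4)$ in the way you claim. The sheaf $j_{\ast}j^{\ast}\mathcal{F}$ for the open immersion $j:Z-W\hookrightarrow Z$ is in general not coherent (already $j_{\ast}\mathcal{O}_{Z-W}$ fails to be coherent when $W$ contains a divisor of $Z$, e.g.\ a component of $\sing(\imath^{\ast}\mathcal{F})$ of codimension one), so there is no exact sequence of coherent sheaves to which property $(4)$ applies; and even granting coherence, comparing $\tau(\mathcal{F})$ with $\tau$ of another sheaf \emph{on $Z$} is not the same as identifying the image of $\tau(\mathcal{F})$ under $H_{2k}(Z,\mathbb{Q})\to H_{2k}^{BM}(Z-W,\mathbb{Q})$ with a class computed by a Riemann--Roch transformation on $Z-W$. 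Property $(1)$ is only available, as stated, on compact manifolds. The compatibility of $\tau$ with restriction to Zariski-open subsets is true in the Baum--Fulton--MacPherson/Levy theory, but it is an additional input that must be extracted from the construction of $\tau$ itself, not from the four listed properties; as written, your preferred argument assumes the key point.

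Your first argument is much closer to what the paper does: resolve $Z$, pull back, push forward, and kill the discrepancies (higher direct images, the torsion of the pullback, and the kernel and cokernel of the natural map $\mathcal{F}\to\nu_{\ast}\nu^{\ast}\mathcal{F}$) using properties $(3)$ and $(4)$, since all of these are supported in dimension $<k$. The one missing step there is that $(\nu^{\ast}\mathcal{F})/\mathrm{tor}$ is torsion-free but need not be locally free on $\widetilde{Z}$ when $\dim Z\geq 2$, so property $(1)$ as stated does not apply to it. The paper repairs exactly this with Hironaka's \emph{flattening} theorem: after further blow-ups along smooth centers the torsion-free pullback becomes locally free, and only then is property $(1)$ invoked. (Alternatively one can extend property $(1)$ to arbitrary coherent sheaves on a compact manifold, as in Remark \ref{rmk:k}, but that uses Fulton's lemma and requires its own argument.) With the flattening step added, your first argument --- together with your reduction to irreducible support via additivity, which matches the paper's use of the injection $\mathcal{F}\to\bigoplus_{j}(\imath_{j})_{\ast}\mathcal{F}|_{Z_{j}}$ whose cokernel lives on the pairwise intersections --- becomes a complete proof along the paper's lines.
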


\begin{proof}
First, we prove that for a reduced and irreducible, closed complex subspace $%
Z $ of $X$, and a coherent sheaf $\mathcal{F}$ on $Z$, $\tau (\mathcal{F}%
)_{2\dim Z}=(\rank \mathcal{F)}[Z]$, where $[Z]$ denotes the generator of $%
H_{2\dim Z}(Z,\QBbb )$. Note that if $Z$ is smooth and $\mathcal{F%
}$ is locally free, the statement follows directly from property $(1)$. By
Hironaka's theorem we may resolve the singularities of $Z$ by a sequence of
blow-ups along smooth centers. By Hironaka's flattening theorem, we can
perform a further sequence of blow-ups along smooth centers to obtain a
complex manifold $\widehat{Z}$, and a map $\pi :\widehat{Z}\rightarrow Z$
such that 
\begin{equation*}
\pi :\widehat{Z}-\pi ^{-1}(\sing Z\cup \sing \mathcal{F})\rightarrow Z-(%
\sing Z\cup \sing \mathcal{F)}
\end{equation*}
is a biholomorphism, 
%$\pi $ is a composition of blow-ups along complex submanifolds 
and $\mathcal{\widehat{F}}=\pi ^{\ast }\mathcal{F}/\mathrm{tor}(\pi ^{\ast }%
\mathcal{F})$ is locally free. Then $\tau (\mathcal{\widehat{F}})_{2\dim 
\widehat{Z}}=(\rank \mathcal{\widehat{F})}[\widehat{Z}]=(\rank \mathcal{F)}[%
\widehat{Z}]$. The fundamental class of an analytic subvariety is always
equal to the pushforward of the fundamental class of a resolution of
singularities (see \cite[Ch.\ 11.1.4]{Voisin02}), so we have that $\pi
_{\ast }(\tau (\mathcal{\widehat{F}})_{2\dim Z})=(\rank \mathcal{F)}[Z]$. By
naturality, this is also $\tau (\pi _{!}\mathcal{\widehat{F})}_{2\dim Z}$.
Since $\pi $ is, in particular, a proper map, the stalks of the higher
direct image sheaves are given by $(R^{i}\pi _{\ast }\mathcal{\widehat{F}}%
)_{z}=H^{i}(\pi ^{-1}(z),\mathcal{\widehat{F})}$, and so $R^{i}\pi _{\ast }%
\mathcal{\widehat{F}}$ is supported on a proper subvariety for $i>0$.
Therefore, $\tau (R^{i}\pi _{\ast }\mathcal{\widehat{F})}_{2\dim Z}=0$ for $%
i>0$ by property $(3)$. It follows that $\tau (\pi _{\ast }\mathcal{\widehat{%
F})}_{2\dim Z}\mathcal{=}(\rank \mathcal{F)}[Z]$. In fact, since $\mathrm{tor%
}(\pi ^{\ast }\mathcal{F})$ is supported on a divisor, by properties (1),
(3), and  $(4)$, and the above argument applied to the higher direct images
of $\pi ^{\ast }\mathcal{F}$, we have 
\begin{equation}  \label{eqn:rank}
(\rank \mathcal{F)}[Z]=\pi _{\ast }(\tau (\mathcal{\widehat{F})}_{2\dim Z})%
\mathcal{=\pi }_{\ast }(\mathcal{\tau (}\pi ^{\ast }\mathcal{F})_{2\dim
Z})=\tau (\pi _{\ast }\pi ^{\ast }\mathcal{F)}_{2\dim Z}.
\end{equation}
On the other hand the natural map $\mathcal{F}\overset{\alpha}{%
\longrightarrow}\pi _{\ast }\pi ^{\ast }\mathcal{F}$ is an isomorphism away
from the proper subvariety $\sing Z \cup\, \sing \mathcal{F}$. Therefore,
the sheaves $\ker (\alpha )$ and the quotient $Q$ of $\pi _{\ast }\pi ^{\ast
}\mathcal{F}$ by $\mathcal{F}/\ker (\alpha )$ are supported on $\sing Z\cup %
\sing \mathcal{F}$. Considering the exact sequence:%
\begin{equation*}
0\longrightarrow \mathcal{F}/\ker (\alpha )\longrightarrow \mathcal{\pi }_{\ast }\pi ^{\ast }\mathcal{F}\longrightarrow Q\longrightarrow 0,%\text{ }
\end{equation*}
we see by $(4)$ that 
\begin{equation}  \label{eqn:tau-exact}
\tau (\mathcal{\pi }_{\ast }\pi ^{\ast }\mathcal{F})=\tau (\mathcal{F}/\ker
(\alpha ))+\tau (Q)=\tau (\mathcal{F})-\tau (\ker (\alpha ))+\tau (Q)
\end{equation}
Since $\ker (\alpha )$ and $Q$ are supported on $Z_{\sing }$, $(3)$ implies $%
\tau (\ker (\alpha ))_{2\dim Z}=$ $\tau (Q)_{2\dim Z}=0 $. Therefore, taking
the top dimensional component in \eqref{eqn:tau-exact} we obtain: 
\begin{equation*}
\tau (\mathcal{F})_{2\dim Z}=\tau (\mathcal{\pi }_{\ast }\pi ^{\ast }%
\mathcal{F})_{2\dim Z},
\end{equation*}
so by \eqref{eqn:rank}, $\tau (\mathcal{F})_{2\dim Z}=(\rank \mathcal{F)}[Z] 
$.

Now we prove the proposition. If $\supp \mathcal{F}=Z\overset{\imath }{%
\hookrightarrow }X$ is irreducible so that $\imath ^{\ast }\mathcal{F}$ has
constant rank on $Z$, then by what we have just proven $\tau (\mathcal{F}%
\bigr|_{ Z})_{2\dim Z} =(\rank \mathcal{F}\bigr|_{ Z})[Z]$. By naturality,
and the fact that $\imath _{\ast }\imath ^{\ast }\mathcal{F}=\mathcal{F}$,
we have $\tau (\mathcal{F})_{2\dim Z}=$ $(\rank \mathcal{F}\bigr|_{ Z})[Z]$.
If $Z$ has several irreducible components $Z_{1},\cdots ,Z_{l}$, of
dimension $\dim Z$ then we may similarly consider the embeddings $\imath
_{j}:Z_{j}\hookrightarrow X$. Then $\rank \mathcal{F}\bigr|_{ Z_j}=\rank %
((\imath_j)_\ast \mathcal{F}\bigr|_{ Z_j})$, and this latter sheaf is
supported on $Z_{j}$, so that $\tau ((\imath_j)_\ast \mathcal{F}\bigr|_{
Z_j})_{2\dim Z}=(\rank \mathcal{F}\bigr|_{ Z_j})[Z_{j}]$. Moreover, the
natural map $\displaystyle \mathcal{F}\rightarrow
\bigoplus_{j}(\imath_j)_\ast \mathcal{F}\bigr|_{ Z_j}$ is an injection, and
the quotient is supported on the pairwise intersections of the $Z_{j}$ (and
the irreducible components of lower dimension), and so has zero $\tau $ in
dimension $2\dim Z$. Therefore by properties $(3)$ and $(4)$ we have 
\begin{equation*}
\tau (\mathcal{F})_{2\dim Z}=\tau \biggl(\bigoplus_{j}(\imath_j)_\ast 
\mathcal{F}\bigr|_{ Z_j}\biggr)_{2\dim Z}=\sum_{j}(\rank \mathcal{F}\bigr|_{
Z_j})[Z_{j}].
\end{equation*}
\end{proof}

\begin{remark}
If $X$ and $Y$ are compact, complex manifolds, Atiyah and Hirzebruch \cite%
{AtiyahHirzebruch61} prove the topological Grothendieck-Riemann-Roch
theorem.\ That is, for any continuous map $f:X\rightarrow Y$, the diagram:%
\begin{equation*}
\xymatrix{ K_{{\rm top}}^{0}(X) \ar[r]^{ch^\ast\ }\ar[d]_{f_\ast } &
H^{2\ast }(X, \QBbb ) \ar[d]^{f_\ast } \\ K_{{\rm top}}^{0}(Y)
\ar[r]^{ch^{\ast }\ } & H^{2\ast }(Y,\QBbb) }
\end{equation*}%
commutes up to multiplication by Todd classes on both sides, where $f_{\ast
} $ on both sides of the diagram is the Gysin map given by the induced map
in homology and Poincar\'{e} duality. Combining this with Levy's theorem, it
follows that for a proper holomorphic map $f:X\rightarrow Y$ and any class $%
\eta \in K_{0}^{\mathrm{hol}}(X)$, 
\begin{equation*}
f_{\ast }(ch(\eta )\cdot Td(X))=ch(f_{!}(\eta ))\cdot Td(Y)
\end{equation*}%
in $H^{2\ast }(Y,\QBbb )$. This is the exact analogue of the
formula proven in \cite{OTT}, where the identity is in the Hodge ring rather
than rational cohomology.
\end{remark}

\begin{remark}
\label{rmk:k} Define $K_{\mathrm{hol}}^{0}(X)$ to be the Grothendieck group
of holomorphic vector bundles on a complex manifold $X$. Let $\mathcal{F}$
be a coherent analytic sheaf on $X$ defining a class $\eta \in K_{0}^{%
\mathrm{hol }}(X)$. By the lemma in Fulton \cite{Fulton77}, there is a
complex manifold $\widehat X$ with a proper morphism $\pi :\widehat{X}%
\longrightarrow X$ an element $\zeta \in K_{\mathrm{hol}}^{0}(\widehat{X})$
such that $\pi _{!}(\mathrm{PD}\,\zeta )=\eta $, (where $\mathrm{PD}\, \zeta 
$ is given by the class $\zeta \otimes \lbrack \mathcal{O}_{\widehat{X}}]$,
the cap product with the fundamental class in $K_{0}^{\mathrm{hol}}(\widehat{%
X})$). Then by property $(1)$, naturality of $\tau $, and the previous
remark: 
\begin{eqnarray*}
\tau (\mathcal{F}) &=&\tau (\eta )=\pi _{\ast }(\tau (\mathrm{PD}\,\zeta
)=\pi _{\ast }[(ch^{\ast }\mathrm{PD}\,\zeta \cdot Td(\widehat{X})]\cap
\lbrack \widehat{X}]) \\
&=&[ch(\mathcal{\pi }_{!}(\mathrm{PD}\,\zeta ))\cdot Td(X)]\cap \lbrack
X]=ch(\mathcal{F})\cdot Td(X)\cap \lbrack X].
\end{eqnarray*}
Therefore, in fact property $(1)$ holds for arbitrary coherent sheaves.
\end{remark}

Finally, we have the

\begin{proof}[Proof of Proposition \protect\ref{prop:key}]
If $\mathcal{T}$ is a torsion sheaf on a complex manifold $X$, then $Z=\supp 
\mathcal{T}$ \ is a complex analytic subvariety of $X$ (via the annihilator
ideal sheaf). Suppose $\codim Z=p$. Then for any $k<p$, by property $(3)$ we
have $\tau (\mathcal{T})_{\dim 2n-2k}=0$. Applying Remark \ref{rmk:k} we
have: 
\begin{equation*}
\tau (\mathcal{T})_{\dim 2n-2}=\mathrm{PD}\left[ c_{1}(\mathcal{T})+\frac{%
\rank\mathcal{T}}{2}c_{1}(X)\right] =\mathrm{PD}[c_{1}(\mathcal{T})]
\end{equation*}%
since $\mathcal{T}$ is torsion and therefore has generic rank $0$. If $p\geq
2$, the left hand side vanishes. By induction we therefore see that for any $%
k<p$, we have: 
\begin{equation*}
0=\tau (\mathcal{T})_{\dim 2n-2k}=\mathrm{PD}\left[\sum_{i\leq k}ch_{i}(%
\mathcal{T})\cdot Td_{k-i}(X)\right]=\mathrm{PD}[ch_{k}(\mathcal{T})].
\end{equation*}%
By Proposition \ref{prop:components}, $\tau (\mathcal{T})_{\dim
2n-2p}=\sum_{j}m_{Z_{j}}[Z_{j}]$, and by Remark \ref{rmk:k} it is also $[ch(%
\mathcal{T})\cdot Td(X)]_{p}\cap \lbrack X]$. Since all components of the
Chern character of degree less than $p$ are zero, this is equal to $\mathrm{%
PD}[ch_{p}(\mathcal{T})]$, so we have: $ch_{p}(\mathcal{T})=\mathrm{PD}%
\bigl( \sum_{j}m_{Z_{j}}[Z_{j}]\bigr)
$.
\end{proof}

\subsection{Proof of Proposition \protect\ref{prop:ch2}}

\label{sec:chernclass}

In this section we show that the second Chern character form of an
admissible connection on a reflexive sheaf actually represents the
cohomology class of the second Chern character, at least when the sheaf
satisfies a certain technical, topological assumption. The proof of this
result follows the general argument in \cite{Tian00}. However, instead of an
admissible Yang-Mills connection and the corresponding monotonicity formula
of Price which are the context of \cite{Tian00}, we have the Chern
connection of an admissible metric in the sense of Section \ref%
{sec:stability}. The uniform bound on the Hermitian-Einstein tensor means we
may instead use the monotonicity formula and $L^p$-estimates derived for
integrable connections on K\"ahler manifolds in \cite{Uhlenbeck}. For
completeness, we provide details of the proof below.

Let us review the two key results of \cite{Uhlenbeck} that we will
need. First, let $E\to X$ be a hermitian bundle with an integrable connection $A$,
and suppose $\sup_X |\Lambda_\omega F_A|\leq H_0$. For constants $C_1$, $C_2$, and $%
x\in X$, $\sigma>0$, define 
\begin{equation}  \label{eqn:density}
e_A(x,\sigma)=C_1\sigma^4H_0^2+ (1+C_2
\sigma^2)^{2n-2}\sigma^{4-2n}\int_{B_\sigma(x)}|F_A|^2 dvol_\omega
\end{equation}
Then for appropriately chosen $C_1$, $C_2$, $\sigma_0>0$ (depending only on
the geometry of $X$), it follows from \cite[Thm.\ 3.5]{Uhlenbeck} that $%
e_A(x,\sigma)$ is monotone increasing, i.e.\ for all $0<\sigma\leq\rho\leq
\sigma_0$, 
\begin{equation}  \label{eqn:monotone}
e_A(x,\sigma)\leq e_A(x,\rho)
\end{equation}
Second, 
 there are constants $\varepsilon_0>0$, $C>0$
(depending only on the geometry of $X$) such that if $4\sigma\leq
\sigma_0$ and $e_A(x,4\sigma)<\varepsilon_0$, then one can find a gauge
transformation $g$ so that: 
\begin{equation}  \label{eqn:supA}
\sup_{B_{\sigma/2}(x)}\left|g(A)\right|
\leq
\frac{C}{\sigma}\left\{ \biggl(\sigma^{4-2n}\int_{B_{4\sigma}(x)}|F_A|^2 dvol_\omega\biggr)%
^{1/2} +\sigma^2 H_0 \right\}
\end{equation}
Indeed, by \cite[Thm.\ 2.6]{Uhlenbeck} the conditions guarantee that the
hypotheses of \cite[Thm.\ 2.2]{Uhlenbeck} are satisfied locally about $x$.
We refer to the proof of that theorem,  which uses  the main result of  
\cite[Sec.\ 2]{Uhlenbeck82a}. A consequence is that we can find a gauge transformation
such that, in a local trivialization and after rescaling to the unit ball, 
$d^\ast A=0$, and 
\begin{equation} \label{eqn:A}
\Vert A\Vert_{L^p_1(B_{1/2}(x))}
\leq C_p\left\{ \biggl(\int_{B_{1}(x)}|F_A|^2 dvol_\omega\biggr)%
^{1/2} + \widehat H_0 \right\}
\end{equation}
for any $2< p< 2n$, and $\widehat H_0=\sigma^2 H_0$.
In particular, $A\in L^q$ with estimates for any $q$.  Now, the fact that $A$ is
integrable and in good gauge implies (see \cite[eq.\ (27)]{Uhlenbeck}),
\begin{align}
\begin{split} \label{eqn:elliptic}
\dbar \alpha &= -[\alpha, \alpha]\\
\dbar^\ast\alpha &= -\sqrt{-1}\Lambda
[\alpha,\alpha^\ast]+\sqrt{-1}\Lambda F_A 
\end{split}
\end{align}
where $\alpha=A^{0,1}$, $\alpha^\ast=-A^{1,0}$. Applying the elliptic
estimate for $\dbar$ to \eqref{eqn:elliptic} (recall that $\vert \sqrt{-1}\Lambda
F_A\vert\leq \widehat H_0$), we see that \eqref{eqn:A} holds for some
$p> 2n$.  We therefore obtain a H\"older 
bound on $A$, and \eqref{eqn:supA} follows after rescaling back to
$B_\sigma(x)$.

With these preliminaries we give the

\begin{proof}[Proof of Proposition \protect\ref{prop:ch2}]
Let $A$ denote the Chern connection of $(\Gr(\mathcal{E}),h)$.  We wish to
show the current defined in \eqref{eqn:current-def} is closed, i.e.\ $ch_{2}(%
\Gr(\mathcal{E}),h)(\Omega)=0$ for any $\Omega=d\phi$.  The argument closely
follows the proof in \cite[Prop.\ 2.3.1]{Tian00}. By using a partition of
unity we may assume $\phi$ is compactly supported in a coordinate ball $U$.
Note that $\Gr(\mathcal{E})$ is smoothly isomorphic to the underlying vector
bundle $E$ of $\mathcal{E}$ on $X-Z^{alg}$. Hence, we may assume that $A$ is
an integrable connection on a trivial bundle on $U$, smooth away from $Z$,
with $F_A\in L^2$ and $\vert\Lambda_\omega F_A\vert$ uniformly bounded by $H_0$.
For $0<\sigma<\sigma_0$, $0<\varepsilon<\varepsilon_0$, let 
\begin{equation*}
E_{\sigma,\varepsilon}=\left\{ x\in U :
e_A(x,4\sigma)\geq\varepsilon\right\} 
\end{equation*}
where $e_A(x,\sigma)$ is defined in \eqref{eqn:density}. Denote $\sigma$%
-neighborhoods of subsets of $U$ by $\mathcal{N}_\sigma$.

Choose a covering $\{ B_{2\sigma}(x_k), B_{2\sigma}(y_k)\}$ of $Z\cup
E_{\sigma,\varepsilon}$, with $x_k\in Z$, $y_k\in E_{\sigma, \varepsilon}$,
and such that the balls $B_{\sigma}(x_k)$, $B_{\sigma}(y_k)$, are all
disjoint. If $x\not \in \bigcup_k B_{6\sigma}(x_k)\cup B_{2\sigma}(y_k)$,
then $B_{4\sigma}(x)\subset X-Z$, and $e_A(x,4\sigma)<\varepsilon$. In
particular, for any $x\not \in \mathcal{N}_{8\sigma}(Z)\cup\mathcal{N}%
_{4\sigma}(E_{\sigma,\varepsilon})$, there is a gauge transformation $g$
such that \eqref{eqn:supA} holds. As in \cite[pp.\ 217-218]{Tian00}, we can
piece together the gauge transformations to obtain a global Chern-Simons
form away from $\mathcal{N}_{8\sigma}(Z)\cup\mathcal{N}_{4\sigma}(E_{\sigma,%
\varepsilon})$: 
\begin{equation*}
CS(A)=\tr\bigl(A\wedge F_{A} +(1/3)A\wedge A\wedge A\bigr)
\end{equation*}
for $A$ in this gauge, with $dCS(A) =\tr (F_A\wedge F_A) $. Now for $x\in
B_{16\sigma}(x_k)-B_{8\sigma}(x_k)$, and using
\eqref{eqn:supA}, 
\begin{align}
\left|CS(A)(x)\right|&\leq |A(x)||F_A(x)|+(1/3)|A(x)|^3  \notag \\
&\leq \frac{1}{2\sigma}|A(x)|^2+\frac{\sigma}{2}|F_A(x)|^2+\frac{1}{3}%
|A(x)|^3  \notag \\
&\leq C \sigma^{1-2n}\int_{B_{4\sigma}(x)}|F_A|^2 dvol_\omega +C\sigma H_0^2
+\frac{\sigma}{2}|F_A(x)|^2  \notag \\
&\leq C\sigma^{1-2n}\int_{B_{20\sigma}(x_k)}|F_A|^2 dvol_\omega + C\sigma
H_0^2 +\frac{\sigma}{2}|F_A(x)|^2  \label{eqn:csestx}
\end{align}
Similarly, for $y\in B_{8\sigma}(y_k)-B_{4\sigma}(y_k)$, 
\begin{equation}  \label{eqn:csesty}
\left|CS(A)(y)\right|\leq C\sigma^{1-2n}\int_{B_{12\sigma}(y_k)}|F_A|^2
dvol_\omega + C\sigma H_0^2 +\frac{\sigma}{2}|F_A(y)|^2
\end{equation}
(we have assumed $\varepsilon\leq 1$). Now choose a smooth cut-off function $%
\eta$, $\eta(t)\equiv 0$ for $t\leq 1$, $\eta(t)\equiv 1$ for $t\geq 2$. It
follows that 
\begin{align}
\left| \int_X d\phi\wedge \Tr(F_A\wedge F_A) \right| &=\lim_{\sigma\to 0}
\left| \int_{X}\eta\left(\mathrm{dist}(x,Z)/8\sigma\right)\eta\left(\mathrm{%
dist}(x,E_{\sigma,\varepsilon})/4\sigma\right) d\phi\wedge dCS(A)\right| 
\notag \\
&\leq \lim_{\sigma\to 0}\biggl\{ \int_{8\sigma\leq \mathrm{dist}(x,Z)\leq
16\sigma}\frac{1}{8\sigma} |d\phi||CS(A)| dvol_\omega(x)  \notag \\
&\qquad\qquad\qquad + \int_{4\sigma\leq \mathrm{dist}(x,E_{\sigma,%
\varepsilon})\leq 8\sigma}\frac{1}{4\sigma} |d\phi||CS(A)| dvol_\omega(x)%
\biggr\}  \notag \\
& \leq C\sup|d\phi| \lim_{\sigma\to 0} \sum_k\Bigl\{ \int_{B_{20%
\sigma}(x_k)}(\left| F_A\right|^2 +CH_0^2) dvol_\omega  \notag \\
&\qquad\qquad\qquad+\int_{B_{12\sigma}(y_k)}(\left| F_A\right|^2 +CH_0^2)
dvol_\omega\Bigr\}  \notag \\
& \leq C\sup|d\phi| \lim_{\sigma\to 0} \int_{\mathcal{N}_{20\sigma}(Z)\,\cup%
\,\mathcal{N}_{12\sigma}(E_{\sigma,\varepsilon})}(\left| F_A\right|^2
+CH_0^2)dvol_\omega  \label{eqn:cslim}
\end{align}
since the number of $i,j,k,l$ such that the balls $B_{20\sigma}(x_i)$, $%
B_{20\sigma}(x_j)$, $B_{12\sigma}(y_k)$, $B_{12\sigma}(y_l)$, intersect is
bounded independently of $\sigma$.

\begin{claim}
For $0<\sigma^{\prime }\leq \sigma$, we have the following inclusions: 
\begin{equation*}
\mathcal{N}_{20\sigma^{\prime }}(Z)\cup \mathcal{N}_{12\sigma^{\prime
}}(E_{\sigma^{\prime },\varepsilon}) \subset \mathcal{N}_{20\sigma}(Z)\cup 
\mathcal{N}_{12\sigma}(E_{\sigma,\varepsilon}) 
\end{equation*}
\begin{equation*}
\bigcap_{\sigma>0}\left\{\mathcal{N}_{20\sigma}(Z)\cup \mathcal{N}%
_{12\sigma}(E_{\sigma,\varepsilon})\right\}\subset Z 
\end{equation*}
\end{claim}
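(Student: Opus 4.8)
The plan is to establish the two inclusions independently. The first is a consequence of the Uhlenbeck monotonicity \eqref{eqn:monotone} of the density $e_A$, which may be applied on any ball contained in $X-Z$ (where $A$ is a smooth integrable connection with $|\Lambda_\omega F_A|\leq H_0$), together with the triangle inequality. The second uses only that $A$ is smooth off $Z$, so that $|F_A|$ is bounded on compact subsets of $X-Z$ and hence the rescaled energy $\sigma^{4-2n}\int_{B_\sigma(x)}|F_A|^2\,dvol_\omega$ is $O(\sigma^4)$ near any point not on $Z$.

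For the first inclusion, $\mathcal{N}_{20\sigma'}(Z)\subset\mathcal{N}_{20\sigma}(Z)$ is immediate from $\sigma'\leq\sigma$, so it remains to treat $\mathcal{N}_{12\sigma'}(E_{\sigma',\varepsilon})$. First I would take $z$ with $\dist(z,E_{\sigma',\varepsilon})<12\sigma'$ and pick $x\in E_{\sigma',\varepsilon}$ with $\dist(z,x)<12\sigma'$, then split into cases according to whether $B_{4\sigma}(x)$ meets $Z$. If $B_{4\sigma}(x)\cap Z\neq\emptyset$ then $\dist(x,Z)<4\sigma$, so $\dist(z,Z)<12\sigma'+4\sigma\leq16\sigma$ and $z\in\mathcal{N}_{20\sigma}(Z)$. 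If $B_{4\sigma}(x)\cap Z=\emptyset$ then $A$ restricts to a smooth integrable connection on $B_{4\sigma}(x)$, so \eqref{eqn:monotone} applies on $(0,4\sigma]$ and yields $e_A(x,4\sigma)\geq e_A(x,4\sigma')\geq\varepsilon$, whence $x\in E_{\sigma,\varepsilon}$ and $z\in\mathcal{N}_{12\sigma'}(E_{\sigma,\varepsilon})\subset\mathcal{N}_{12\sigma}(E_{\sigma,\varepsilon})$ (using $4\sigma\leq\sigma_0$ and $\varepsilon\leq\varepsilon_0$, as assumed throughout the proof of Proposition \ref{prop:ch2}).

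For the second inclusion, suppose $z$ lies in the intersection but $z\notin Z$. Since $Z$ is closed, $\delta:=\dist(z,Z)>0$, and for every $\sigma<\delta/20$ we have $z\notin\mathcal{N}_{20\sigma}(Z)$, forcing $z\in\mathcal{N}_{12\sigma}(E_{\sigma,\varepsilon})$; so one may choose $x_\sigma\in E_{\sigma,\varepsilon}$ with $\dist(z,x_\sigma)<12\sigma$. For $\sigma<\delta/32$ one has $B_{4\sigma}(x_\sigma)\subset\overline{B_{\delta/2}(z)}$, a compact subset of $X-Z$ on which $A$ is smooth; set $M:=\sup_{\overline{B_{\delta/2}(z)}}|F_A|$. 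Plugging into the definition \eqref{eqn:density} of $e_A$ and using the volume comparison $\vol(B_{4\sigma}(x_\sigma))\leq C(4\sigma)^{2n}$ valid for small $\sigma$, one obtains $e_A(x_\sigma,4\sigma)\leq C_1(4\sigma)^4H_0^2+2^{2n-2}M^2C(4\sigma)^4\leq C'\sigma^4$, which tends to $0$ as $\sigma\to0$ and contradicts $e_A(x_\sigma,4\sigma)\geq\varepsilon>0$. Hence $z\in Z$.

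The only delicate point, and the reason for the case distinction, is that \eqref{eqn:monotone} together with the estimates \eqref{eqn:lp} and \eqref{eqn:supA} are stated for genuinely smooth integrable connections, whereas here $A$ is singular along $Z$; these tools may therefore be invoked only on balls contained in $X-Z$, and any ball of the relevant radius that meets $Z$ must be absorbed into the neighborhood $\mathcal{N}_{20\sigma}(Z)$ rather than analyzed directly. Everything else is routine bookkeeping with the triangle inequality and the Euclidean volume estimate for small geodesic balls of the fixed Kähler manifold.
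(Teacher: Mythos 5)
Your proof is correct and follows essentially the same route as the paper: for the first inclusion, the triangle inequality shows that either the point is absorbed into $\mathcal{N}_{20\sigma}(Z)$ or $B_{4\sigma}(x)$ misses $Z$, in which case the monotonicity \eqref{eqn:monotone} upgrades $x$ from $E_{\sigma',\varepsilon}$ to $E_{\sigma,\varepsilon}$; for the second, smoothness of $A$ away from $Z$ forces $e_A(\cdot,4\sigma)\to 0$ near any point off $Z$, contradicting the lower bound $\varepsilon$. Your treatment of the second inclusion is in fact slightly more careful than the paper's one-sentence version, since you supply the uniform curvature bound on a fixed compact neighborhood of $z$, which is needed because the witnesses $x_\sigma\in E_{\sigma,\varepsilon}$ vary with $\sigma$.
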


\noindent Indeed, if $y\in \mathcal{N}_{12\sigma^{\prime
}}(E_{\sigma^{\prime },\varepsilon})$ and $y\not\in \mathcal{N}_{20\sigma}(Z)
$, then there is $x\in E_{\sigma^{\prime },\varepsilon}$ such that $%
d(x,y)<12\sigma^{\prime }\leq 12\sigma$, and if $z\in Z$, then $20\sigma\leq
d(y,z)\leq d(x,y)+d(x,z)< d(x,z)+12\sigma$, so $B_{4\sigma}(x)\subset X-Z$.
Now \eqref{eqn:monotone} applies, and so $\varepsilon\leq
e_A(x,4\sigma^{\prime })\leq e_A(x,4\sigma)$. It follows that $x\in
E_{\sigma,\varepsilon}$ and $y\in \mathcal{N}_{12\sigma}(E_{\sigma,%
\varepsilon})$. This proves the first statement in the claim. The second
statement follows from the fact that $A$ is smooth away from $Z^{alg}$;
hence, $\displaystyle\lim_{\sigma\to 0} e_A(x,\sigma)=0$ for $x\not\in
Z^{alg}$.  

Now by the claim and the dominated convergence theorem, the limit
on the right hand side of \eqref{eqn:cslim} vanishes, and closedness of $%
ch_{2}(\Gr(\mathcal{E}),h)$ follows.

Since $ch_{2}(\Gr(\mathcal{E}),h)$ is a closed current it defines a
cohomology class. By Poincar\'e duality, to check that indeed $[ch_{2}(\Gr(%
\mathcal{E}),h)]=ch_{2}(\Gr(\mathcal{E})^{\ast\ast})$, it suffices to show
that for any smooth, closed $(2n-4)$-form $\Omega$ whose cohomology class is
dual to a $4$-dimensional rational homology class $[\Sigma]$, 
\begin{equation}  \label{eqn:evaluate}
ch_{2}(\Gr(\mathcal{E})^{\ast\ast})[\Sigma]=ch_{2}(\Gr(\mathcal{E}%
),h)(\Omega)
\end{equation}
Since a multiple of a rational homology class is represented by an embedded
manifold, and since $\Gr(\mathcal{E})^{\ast\ast}$ is locally free away from
a set of (real) codimension $\geq 6$, by a transversality argument we may
assume (after passing to an integer multiple) that the homology class $%
[\Sigma]$ is represented by a smoothly embedded submanifold $\Sigma\subset X-%
\sing\Gr(\mathcal{E})^{\ast\ast}$. By the Thom isomorphism we may then
choose the form $\Omega$ to be compactly supported in $X-\sing\Gr(\mathcal{E}%
)^{\ast\ast}$. Find a global resolution 
\begin{equation}  \label{eqn:resolution}
0\longrightarrow E_{r}\longrightarrow E_{r-1}\longrightarrow
\cdots\longrightarrow E_{0}\longrightarrow \Gr(\mathcal{E}%
)^{\ast\ast}\otimes _{\mathcal{O}_{X}}\mathcal{A}_{X}\longrightarrow 0
\end{equation}
where the $E_{i}$ are real analytic complex vector bundles on X, and fix
smooth connections $\nabla_i$ on $E_i$. Then by Definition \ref%
{def:chernchar}, 
\begin{equation}  \label{eqn:ch2-rep1}
ch_{2}(\Gr(\mathcal{E})^{\ast\ast}) =\left[ -\frac{1}{8\pi^2}\sum_{i=0}^r
(-1)^i \tr(F_{\nabla_i}\wedge F_{\nabla_i}) \right]
\end{equation}
If we choose a sequence of smooth hermitian metrics $h^{\ast\ast}_j$ on the
locally free part of $\Gr(\mathcal{E})^{\ast\ast}$ with Chern connections $%
A_j^{\ast\ast}$, then since \eqref{eqn:resolution} is an exact sequence of
analytic vector bundles away from $\sing\Gr(\mathcal{E})^{\ast\ast}$, there
are smooth forms $\Psi_j$ such that 
\begin{equation}  \label{eqn:ch2-rep2}
-\frac{1}{8\pi^2} \tr(F_{A_j^{\ast\ast}}\wedge F_{A_j^{\ast\ast}}) +\frac{1}{%
8\pi^2}\sum_{i=0}^r (-1)^i \tr(F_{\nabla_i}\wedge F_{\nabla_i}) =d\Psi_j
\end{equation}
on $X-\sing\Gr(\mathcal{E})^{\ast\ast}$. Finally, by Theorem \ref%
{thm:bando-siu} (ii), we may arrange that $h^{\ast\ast}_j\to h$ in $%
L^p_{2,loc.}$ for some $p> 2n$. Then $h_j^{\ast\ast}$ and $%
(h_j^{\ast\ast})^{-1}$ are uniformly bounded on compact subsets of $X-\sing%
\Gr(\mathcal{E})^{\ast\ast}$, and it follows that $F_{A_j^{\ast\ast}}\to F_A$
in $L^2_{loc.}$.  Then for $\Omega$ as above, we obtain from %
\eqref{eqn:ch2-rep1} and \eqref{eqn:ch2-rep2} that 
\begin{align*}
ch_2(\Gr(\mathcal{E})^{\ast\ast})[\Sigma] &= -\frac{1}{8\pi^2}
\int_X\Omega\wedge\sum_{i=0}^r (-1)^i \tr(F_{\nabla_i}\wedge F_{\nabla_i}) \\
&=-\int_X \Omega\wedge\left( \frac{1}{8\pi^2} \tr(F_{A_j^{\ast\ast}}\wedge
F_{A_j^{\ast\ast}})+d\Psi_j\right) \\
&=-\frac{1}{8\pi^2}\int_X \Omega\wedge \tr(F_{A_j^{\ast\ast}}\wedge
F_{A_j^{\ast\ast}})\qquad\text{for all $j$} \\
&=-\frac{1}{8\pi^2}\lim_{j\to \infty}\int_X \Omega\wedge 
\tr(F_{A_j^{\ast\ast}}\wedge F_{A_j^{\ast\ast}}) \\
&=-\frac{1}{8\pi^2}\int_X \Omega\wedge \tr(F_{A}\wedge F_{A})
\end{align*}
Hence, from the definition \eqref{eqn:current-def}, eq.\ \eqref{eqn:evaluate}
holds, and this completes the proof of the proposition. 
\end{proof}

\section{Comparison of singular sets}

\label{Section4}

\subsection{A slicing lemma}

Let $z$ be a smooth point of a codimension $2$ subvariety $Z\subset X$. We
say that $\Sigma $ is a \emph{transverse slice} to $Z$ at $z$ if $\Sigma
\cap Z=\{z\}$ and $\Sigma $ is the intersection of a linear subspace $%
\mathbb{C}^{2}\hookrightarrow \mathbb{C}^{n}$ in some coordinate ball
centered at $z$ that is transverse to $T_{z}Z$ at the origin. Suppose that $T
$ is a smooth, closed $(2,2)$ form and that we have an equation 
\begin{equation*}
T=mZ+dd^{c}\Psi 
\end{equation*}%
where $\Psi $ is a $(1,1)$-current, smooth away from $Z$,  $mZ$ is the current of integration over
the nonsingular points of $Z$ with multiplicity $m$, and the equation is
taken in the sense of distributions. Then for a transverse slice, 
\begin{equation}
m=\int_{\Sigma }T-\int_{\partial \Sigma }d^{c}\Psi   \label{eqn:transverse}
\end{equation}%
Indeed, if we choose local coordinates so that a neighborhood of $z$ is
biholomorphic to a polydisk $\Delta \in \mathbb{C}^{n}$, and $Z\cap \Delta $
is given by the coordinate plane $z_{1}=z_{2}=0$, then by King's formula
(cf.\ \cite[Ch.\ III, 8.18]{Demailly}) we have an equation of
currents on $\Delta $: 
\begin{equation*}
T=dd^{c}\left[ \Psi +mudd^{c}u\right] 
\end{equation*}%
where $u(z)=(1/2)\log (|z_{1}|^{2}+|z_{2}|^{2})$ (here, $-\pi i dd^c=\partial\dbar$). Write $T=dd^{c}\alpha $
for a smooth form $\alpha $ on $\Delta $. By the regularity theorem and the
Poincar\'{e} lemma \cite{deRham}, we can find a  form $\beta $, smooth away from $Z\cap\Delta$,  such that 
\begin{equation*}
d^{c}\left[ \Psi -\alpha +mudd^{c}u\right] =d\beta 
\end{equation*}%
on $\Delta $. Let $\Delta _{\varepsilon }=\{z\in \Delta :|z|\leq \varepsilon
\}$. It follows that: 
\begin{align*}
\int_{\Sigma }T& =\lim_{\varepsilon \rightarrow 0}\int_{\Sigma \cap \Delta
_{\varepsilon }^{c}}T=\lim_{\varepsilon \rightarrow 0}\int_{\Sigma \cap
\Delta _{\varepsilon }^{c}}dd^{c}\Psi =\int_{\partial \Sigma }d^{c}\Psi
-\lim_{\varepsilon \rightarrow 0}\int_{\Sigma \cap \partial \Delta
_{\varepsilon }}d^{c}\Psi  \\
& =\int_{\partial \Sigma }d^{c}\Psi -\lim_{\varepsilon \rightarrow
0}\int_{\Sigma \cap \partial \Delta _{\varepsilon }}d^{c}\alpha
+m\lim_{\varepsilon \rightarrow 0}\int_{\Sigma \cap \partial \Delta
_{\varepsilon }}d^{c}(udd^{c}u) \\
& =\int_{\partial \Sigma }d^{c}\Psi +m
\end{align*}%
by direct computation. The next result shows that the analytic
multiplicities may also be calculated by restricting to transverse slices.

\begin{lemma}
\label{lem:slice} Let $A_i$ be as in Theorem \ref{thm:tian} and $Z\subset
Z_{b}^{an}$  an irreducible component of the blow-up set. For a  transverse
slice $\Sigma $ at a smooth point $z\in Z$, we have: 
\begin{equation*}
m_{Z}^{an}=\lim_{i\rightarrow \infty } \frac{1}{8\pi ^{2}}\int_{\Sigma
}\left\{\Tr (F_{A_i}\wedge F_{A_i})- \Tr (F_{A_{\infty }}\wedge F_{A_{\infty
}})\right\} .
\end{equation*}
\end{lemma}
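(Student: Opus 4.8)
The plan is to pass to the Chern--Weil representatives and, near a smooth point of $Z$, reduce the statement to the local situation treated by \eqref{eqn:transverse}. Set $T_i=-\tfrac1{8\pi^2}\Tr(F_{A_i}\wedge F_{A_i})$ and $T_\infty=-\tfrac1{8\pi^2}\Tr(F_{A_\infty}\wedge F_{A_\infty})$; since $A_i$ and $A_\infty$ are integrable and unitary these are of type $(2,2)$, and the assertion of the lemma is exactly $m_Z^{an}=\lim_{i\to\infty}\int_\Sigma(T_\infty-T_i)$. From Theorem \ref{thm:tian}, the smooth closed forms $T_i$ converge weakly, as currents, to $T_\infty-\sum_j m_j^{an}[Z_j^{an}]$. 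I would then localize near the smooth point $z\in Z$: choose holomorphic coordinates in which $Z$ becomes a coordinate plane $\{z_1=z_2=0\}$ and a polydisk $\Delta'$ about $z$ so small that $\Delta'\cap Z^{an}=Z\cap\Delta'$ and $\Delta'$ avoids $\sing A_\infty$. By the Tao--Tian removable singularity theorem $A_\infty$ is gauge equivalent on $\Delta'$ to a smooth connection, so, since $\Tr(F\wedge F)$ is gauge invariant, $T_\infty$ is a smooth closed $(2,2)$-form on $\Delta'$ and $T_\infty-T_i\rightharpoonup m_Z^{an}[Z\cap\Delta']$ there. Take $\Sigma$ to be a linear $\mathbb{C}^2$-slice inside $\Delta'$ transverse to $Z$, so $\Sigma\cap Z=\{z\}$ and $\partial\Sigma\subset\Delta'\setminus Z$.

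Since $\Delta'$ is a polydisk, each smooth closed $(2,2)$-form $T_\infty-T_i$ is $dd^c$-exact there: $T_\infty-T_i=dd^c\phi_i$ for a smooth $(1,1)$-form $\phi_i$ on $\Delta'$. Stokes then gives $\int_\Sigma(T_\infty-T_i)=\int_{\partial\Sigma}d^c\phi_i$, and the right-hand side does not depend on the choice of $\phi_i$: two choices differ by a smooth $dd^c$-closed form $h$ on $\Delta'$, and $\int_{\partial\Sigma}d^ch=\int_\Sigma dd^ch=0$. I would fix one solution operator $P$ for $dd^c(\cdot)=(\cdot)$ on $\Delta'$, built from the Cauchy and Dolbeault--Grothendieck kernels so that $P$ has the usual interior regularity, put $\phi_i:=P(T_\infty-T_i)$, and show that $\phi_i\to\phi_\infty:=m_Z^{an}P([Z\cap\Delta'])$ in $C^\infty_{\mathrm{loc}}(\Delta'\setminus Z)$. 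The ingredients are: the uniform bound $\|T_i\|_{L^1}\le C\|F_{A_i}\|_{L^2}^2$, with $\|F_{A_i}\|_{L^2}^2$ bounded (non-increasing along the Yang--Mills flow, and a fixed topological quantity for Hermitian--Yang--Mills connections), which controls $P(T_\infty-T_i)$ weakly; and the $C^\infty_{\mathrm{loc}}$-convergence of the gauge-invariant integrand $\Tr(F_{A_i}\wedge F_{A_i})$ to $\Tr(F_{A_\infty}\wedge F_{A_\infty})$ on $X\setminus Z^{an}$, which follows from the $C^\infty$ convergence away from the singular set of Hong--Tian and Tian and upgrades the weak convergence to $C^\infty_{\mathrm{loc}}$-convergence off $Z$. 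Granting this, $dd^c\phi_\infty=m_Z^{an}[Z\cap\Delta']$ and $\int_{\partial\Sigma}d^c\phi_i\to\int_{\partial\Sigma}d^c\phi_\infty$, since $\partial\Sigma$ lies away from $Z$.

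Finally $\int_{\partial\Sigma}d^c\phi_\infty=m_Z^{an}$ by the same computation already carried out in the derivation of \eqref{eqn:transverse}: by King's formula $\phi_\infty=m_Z^{an}\,u\,dd^cu+(\text{smooth})$ near $z$, with $u=\tfrac12\log(|z_1|^2+|z_2|^2)$, so applying Stokes on $\Sigma\setminus B_\delta(z)$ and letting $\delta\to0$ gives $\int_{\partial\Sigma}d^c\phi_\infty=m_Z^{an}\lim_{\delta\to0}\int_{\Sigma\cap\partial B_\delta(z)}d^c(u\,dd^cu)=m_Z^{an}$. Combining, $\lim_i\int_\Sigma(T_\infty-T_i)=m_Z^{an}$, which is the claim.

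The step I expect to be the main obstacle is the one in the second paragraph: the convergence of the normalized potentials $\phi_i$ in $C^\infty_{\mathrm{loc}}(\Delta'\setminus Z)$ to a potential of $m_Z^{an}[Z\cap\Delta']$. The difficulty is that $T_i$ does \emph{not} stay bounded near $z$ --- this is precisely where the curvature concentrates --- so this convergence cannot be read off pointwise but must be extracted from the weak convergence of currents in Theorem \ref{thm:tian} together with the uniform energy bound and the interior regularity of $P$; one also has to know that the limiting potential has at most the standard logarithmic singularity along $Z$, which is where the potential theory behind \eqref{eqn:transverse} (King's formula and the regularity theorem for $dd^c$-closed currents on a polydisk) re-enters.
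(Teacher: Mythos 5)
Your reduction of the slice integral to a boundary integral over $\partial\Sigma$ is sound (Stokes plus the well-definedness of $\int_{\partial\Sigma}d^c\phi_i$ for smooth potentials), and the residue computation $\int_{\partial\Sigma}d^c\phi_\infty=m_Z^{an}$ via King's formula is exactly the computation behind \eqref{eqn:transverse}. But the step you yourself flag --- convergence of $d^c\phi_i$ to $d^c\phi_\infty$ on $\partial\Sigma$, where $\phi_i=P(T_\infty-T_i)$ --- is where the entire content of the lemma sits, and as stated it is a genuine gap. The operator $P$ is non-local, so $\phi_i$ near $\partial\Sigma$ is driven by $T_\infty-T_i$ near $Z$, precisely where there is no strong convergence; and $dd^c$ is not elliptic on $(1,1)$-forms, so the $C^\infty_{\mathrm{loc}}$ convergence of $dd^c\phi_i$ on $\Delta'\setminus Z$ gives no control on $\phi_i$ without pinning down $P$ and proving off-diagonal kernel estimates. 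To run your argument you would need to (a) construct $P$ with a kernel smooth off the diagonal and controlled up to $\partial\Delta'$, and (b) upgrade the convergence in Theorem \ref{thm:tian}, which is only stated against \emph{smooth} test forms on $X$, to weak-$\ast$ convergence against the merely continuous, non-compactly-supported test functions $y\mapsto d^c_xK(x,y)$, which in particular requires ruling out loss of mass at $\partial\Delta'$ (using the uniform $L^1$ bound on $\Tr(F_{A_i}\wedge F_{A_i})$ and the fact that the limit measure charges $\partial\Delta'$ trivially for generic $\Delta'$). None of this is carried out, and it is not a routine consequence of "interior regularity of $P$".

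It is worth seeing how the paper avoids this difficulty entirely. Instead of an abstract $dd^c$-potential, it uses the Chern--Simons transgression $CS(A_i,A_\infty)$ as a canonical $d$-potential for $\tr(F_{A_i}\wedge F_{A_i})-\tr(F_{A_\infty}\wedge F_{A_\infty})$ away from $Z$; this potential is \emph{local} in the connections, so it converges to zero on $\partial\Sigma$ directly from $A_i\to A_\infty$ in $C^\infty_{\mathrm{loc}}(X-Z^{an})$, with no appeal to the weak current convergence. The price is that one must then identify the limit of the slice integrals with $m_Z^{an}$ by a different route: the paper takes the rescaled connections $A_{i,\lambda_i}$ and uses the tangent-cone characterization of the density (Tian \cite[eq.\ (4.2.7)]{Tian00}, Hong--Tian \cite[eq.\ (5.5)]{HongTian04}) to express $m_Z^{an}$ as a limit of integrals over shrinking slices $S_{\lambda_i}$, and then builds an explicit homotopy from $\Sigma$ to $S_{\lambda_i}$ avoiding $Z$ on its lateral boundary, so that the discrepancy is again a Chern--Simons boundary term that vanishes in the limit. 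So the two inputs from Tian/Hong--Tian that you are not using --- the blow-up/rescaling identity for $m_Z^{an}$ and the locality of the transgression --- are exactly what substitute for the potential-theoretic convergence you would otherwise have to prove. If you want to salvage your route, the missing analysis in (a)--(b) above has to be supplied; as written, the proof is incomplete at its central step.
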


\begin{proof}
Assume without loss of generality that $\Sigma\subset B_\sigma(z)$, where
the ball is chosen so that $A_i\to A_\infty$ smoothly and uniformly on
compact subsets of $B_{2\sigma}(z)-Z$. We furthermore assume the exponential
map $\exp_z$ at $z$ defines a diffeomorphism onto $B_{2\sigma}(z)$, and that 
$Z\cap B_{2\sigma}(z)$ is a submanifold. For $\lambda>0$,  let $A_{i,\lambda}
$ be the connection on $T_zX$ obtained by pulling back $A_i$ by the
exponential map, followed by the rescaling $v\mapsto \lambda v$ (cf.\ \cite[%
Sec.\ 3]{Tian00}). Then by definition of the blow-up connection and the
uniqueness of tangent cones, there is a sequence $\lambda_i\downarrow 0$
such that 
\begin{equation*}
m_Z^{an}= \lim_{i\to \infty}\frac{1}{8\pi^2}\int_{V^\perp\cap\, \mathbf{B}%
_1(0)}\left\{ \tr (F_{A_{i,\lambda_i}}\wedge F_{A_{i,\lambda_i}})- \tr %
(F_{A_{\infty,\lambda_i}}\wedge F_{A_{\infty,\lambda_i}}) \right\} 
\end{equation*}
%(note that $A_{\infty,\lambda_j}$ converges to a flat connection as $\lambda\to 0$, so adding this term doesn't change anything).
(see \cite[eq.\ (4.2.7)]{Tian00} and \cite[eq.\ (5.5)]{HongTian04}). The
notation $V^\perp$ denotes the orthogonal complement of $V=T_zZ\subset T_zX$%
, and $\mathbf{B}_1(0)$ is the unit ball about the origin. 
%Given $\varepsilon>0$, fix $\lambda>0$ so that
%\begin{align*}
%m_Z-\varepsilon&\leq \liminf_{i\to \infty}\frac{1}{8\pi^2}\int_{V^\perp\cap\, {\bf B}_1(0)} \left\{ \tr (F_{A_{i,\lambda}}\wedge F_{A_{i,\lambda}})- \tr (F_{A_{\infty,\lambda}}\wedge F_{A_{\infty,\lambda}})
%\right\}\\
%&\leq \limsup_{i\to \infty}\frac{1}{8\pi^2}\int_{V^\perp\cap\, {\bf B}_1(0)} \left\{ \tr (F_{A_{i,\lambda}}\wedge F_{A_{i,\lambda}})- \tr (F_{A_{\infty,\lambda}}\wedge F_{A_{\infty,\lambda}})
%\right\} \\
%&\leq m_Z+\varepsilon
%\end{align*}
Let $S_{\lambda_i}=\exp_z(\lambda_i (V^\perp\cap\, \mathbf{B}_1(0)))$. For
sufficiently large $i$ (i.e.\ $0<\lambda_i$ small), we may assume $%
S_{\lambda_i}\subset B_\sigma(z)$. At this point we choose smooth maps $u_i:
V^\perp\cap\, \mathbf{B}_1(0)\times [0,1]\longrightarrow B_{2\sigma}(z) $,
such that

\begin{itemize}
\item $u_i(V^\perp\cap\, \mathbf{B}_1(0),0)=\Sigma$;

\item $u_i(V^\perp\cap\, \mathbf{B}_1(0),1)=S_{\lambda_i}$;

\item $u_i(v,t)\in B_{2\sigma}(z)-Z$ for all $|v|=1$ and all  $t\in [0,1]$.
\end{itemize}

To be precise, the $u_i$ can be constructed as follows. Without loss of
generality assume:

\begin{enumerate}
\item $Z\cap B_{2\sigma}(0)=\{\exp_z(g(w),w) : w\in V\cap \, \mathbf{B}%
_1(0)\}$. Here, $g(0)=0$, so we have $|g(w)|\leq C_1|w|$ for some constant $%
C_1$ and all $w\in V\cap \, \mathbf{B}_1(0)$.

\item $\Sigma=\{\exp_z(v,f(v)) : v\in V^\perp\cap \, \mathbf{B}_1(0)\}$. We
may assume this form for any transverse slice by the implicit function
theorem. Since $f(0)=0$, we may assume, after possibly shrinking the slice,
that $|f(v)|\leq C_2$ for all $v\in V^\perp\cap \, \mathbf{B}_1(0)$, where $%
C_1C_2\leq 1/2$.

\item $S_{\lambda_i}=\{\exp_z(\lambda_i v, 0) : v\in V^\perp\cap \, \mathbf{B%
}_1(0)\}$.
\end{enumerate}

Now for $v\in V^\perp\cap \, \mathbf{B}_1(0)$, $t\in [0,1]$, set 
\begin{equation*}
u_i(v,t)=\exp_z\left( ((1-t)+t\lambda_i)v, (1-t)f(v)\right) 
\end{equation*}
Then by (ii) and (iii), the image of $u_i(\cdot, 0)$ is $\Sigma$, and the
image of $u_i(\cdot, 1)$ is $S_{\lambda_i}$. Moreover, we may assume that $%
u(v,t)\in B_{2\sigma}(x)$ by changing the radius of $\mathbf{B}_1(0)$.
Finally, note that by (ii), $|(1-t)f(v)|\leq (1-t)C_2$, whereas $%
|((1-t)+t\lambda_i)v|=(1-t)+t\lambda_i$ for $|v|=1$. If $u_i(v,t)\in Z$,
then by (i) we would have $(1-t)+t\lambda_i\leq C_1C_2(1-t)\leq (1/2)(1-t) $%
, which is impossible since $\lambda_i>0$. Hence, $u_i(v,t)\not\in Z$ for $%
|v|=1$. With this understood, we obtain 
\begin{align}
0&=\frac{1}{8\pi^2}\int_{V^\perp\cap\, \mathbf{B}_1(0)\times [0,1]}d
\left\{u_i^\ast \tr (F_{A_i}\wedge F_{A_{i}})-u_i^\ast \tr %
(F_{A_{\infty}}\wedge F_{A_{\infty}}) \right\}  \notag \\
&= \frac{1}{8\pi^2}\int_{V^\perp\cap\, \mathbf{B}_1(0)} \left\{ \tr %
(F_{A_{i,\lambda_i}}\wedge F_{A_{i,\lambda_i}})- \tr (F_{A_{\infty,%
\lambda_i}}\wedge F_{A_{\infty,\lambda_i}}) \right\}  \notag \\
&\qquad - \frac{1}{8\pi^2}\int_{\Sigma} \left\{ \tr (F_{A_{i}}\wedge
F_{A_{i}})- \tr (F_{A_{\infty}}\wedge F_{A_{\infty}}) \right\}
\label{eqn:chern-limit} \\
&\qquad + \frac{1}{8\pi^2}\int_{V^\perp\cap\, \partial\mathbf{B}_1(0)\times
[0,1]}u_i^\ast \left\{ \tr (F_{A_{i}}\wedge F_{A_{i}})- \tr %
(F_{A_{\infty}}\wedge F_{A_{\infty}}) \right\}  \notag
\end{align}
Since $A_i$ and $A_\infty$ are connections on the same bundle away from $Z$,
we may write $d_{A_i}=d_{A_\infty}+a_i$, and define the Chern-Simons
transgression, 
\begin{equation}  \label{eqn:cs}
CS(A_i, A_\infty)= \tr\bigl( a_i\wedge d_{A_\infty}(a_i)+(2/3)a_i\wedge
a_i\wedge a_i+2a_i\wedge F_{A_\infty} \bigr)
\end{equation}
Then 
\begin{align*}
\frac{1}{8\pi^2}\int_{V^\perp\cap\, \partial\mathbf{B}_1(0)\times
[0,1]}&u_i^\ast \bigl\{ \tr (F_{A_{i}}\wedge F_{A_{i}})- \tr %
(F_{A_{\infty}}\wedge F_{A_{\infty}}) \bigr\} \\
& =\frac{1}{8\pi^2}\int_{V^\perp\cap\, \partial\mathbf{B}_1(0)\times [0,1]}
u_i^\ast \left( dCS(A_i, A_\infty)\right) \\
&= \frac{1}{8\pi^2}\int_{V^\perp\cap\, \partial\mathbf{B}_1(0)}CS(A_{i,%
\lambda_i}, A_{\infty, \lambda_i}) -\frac{1}{8\pi^2}\int_{\partial%
\Sigma}CS(A_i, A_{\infty})
\end{align*}
Since $A_i\to A_\infty$ uniformly away from $Z$ and $A_{i,\lambda_i},
A_{\infty, \lambda_i}\to 0$ on compact subsets of $V^\perp-\{0\}$ (cf.\ \cite%
[p.\ 469]{HongTian04}), this term vanishes as $i\to \infty$. We conclude
from \eqref{eqn:chern-limit} that 
\begin{align*}
\lim_{i\to\infty} \frac{1}{8\pi^2}\int_{V^\perp\cap\, \mathbf{B}_1(0)} %
\bigl\{ \tr (F_{A_{i,\lambda_i}}\wedge F_{A_{i,\lambda_i}})&- \tr %
(F_{A_{\infty,\lambda_i}}\wedge F_{A_{\infty,\lambda_i}}) \bigr\} \\
&= \lim_{i\to\infty} \frac{1}{8\pi^2}\int_{\Sigma} \left\{ \tr %
(F_{A_i}\wedge F_{A_i})- \tr (F_{A_{\infty}}\wedge F_{A_{\infty}}) \right\}
\end{align*}
and the proof is complete.
\end{proof}

\subsection{Proof of Theorem \protect\ref{thm:main}}

%
%\begin{proposition}
%\label{prop:multiplicities} Let $Z_{j}^{an}$ be an irreducible
%component of $Z_{b}^{an}$. Then $Z_{j}^{an}\subset 
%\supp (\Gr (E)^{\ast \ast }/\Gr (E))$, and equal to
%some irreducible component $Z_{j}^{alg}$ and moreover $%
%m_{j}^{alg}=m_{j}^{an}$.
%\end{proposition}
%
%\begin{proof}
%Pick a generic smooth point $x\in Z_{j}^{an}$ and transverse slice 
%$\Sigma $ at $x$. Restricting the bundle and HNS filtration to $\Sigma $, it
%follows from Theorem \ref{Bott-ChernTheorem} and Lemma \ref{SlicingLemma}
%that 
%\begin{equation*}
%m_{Z}^{an}=m_{Z}^{alg}+\int_{\partial \Sigma }d^{c}\Psi _{j}
%\end{equation*}%
%Since $\Psi _{j}\rightarrow 0$ smoothly away from $Z^{an}$,
%equality follows.
%\end{proof}
%
%
%
%\begin{lemma} \label{lem:an-mult}
%Let $Z\subset Z_0^{an}$ be an irreducible component.  For any generic $p\in Z$ and generic transverse slice $\Sigma$, $\Sigma\cap Z=\{p\}$, 
%the analytic multiplicity of $Z$  is given by 
%$$
%m_Z^{an}=\lim_{j\to \infty}\int_\Sigma\left\{ \frac{1}{8\pi^2}\tr F_{D_j}^2 - \frac{1}{8\pi^2}\tr F_{D_\infty}^2\right\}
%$$
%\end{lemma}
%
%\section{Proof of the main theorem} \label{sec:proof}
%

Throughout this section we let $A_i=A_{t_i}$, $t_i\to\infty$ be a sequence
along the Yang-Mills flow, and suppose $A_i\to A_\infty$ is an Uhlenbeck
limit with analytic singular set $Z^{an}$ (see Theorem \ref{thm:uhlenbeck}).  Let $Z^{alg}$  denote the
algebraic singular set of the HNS filtration of the initial holomorphic
bundle $\mathcal{E}$. Then we have the following

\begin{proposition}
\label{prop:multiplicities} Let $Z$ be an irreducible codimension $2$
component of $Z^{alg}$. Then $Z\subset Z_b^{an}$ and $m_Z^{alg}=m_Z^{an}$.
Moreover, $Z_b^{an}\subset Z^{alg}$.
\end{proposition}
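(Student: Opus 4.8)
The plan is to localize the comparison to a transverse slice through a generic point of a codimension $2$ component, to use the transverse slice formula \eqref{eqn:transverse} to read off $m_Z^{alg}$ from the Bott--Chern identity of Theorem \ref{thm:bottchern} as a number, to match this with the analytic slice integral of Lemma \ref{lem:slice}, and finally to promote the cohomological identity behind Corollary \ref{cor:cohomologous} to an equality of cycles using Kähler positivity. First I would fix an irreducible codimension $2$ component $Z$ of $Z^{alg}$ and a smooth point $z\in Z$ with $z\notin\sing\Gr(\mathcal{E})^{\ast\ast}\cup\sing A_\infty$; this is possible because $\sing\Gr(\mathcal{E})^{\ast\ast}$ has codimension $\geq 3$, $\sing A_\infty$ has vanishing $(2n-4)$-Hausdorff measure while $Z$ does not, and by Proposition \ref{prop:scheja} we may also take $Z^{alg}$ pure of codimension $2$ near $z$. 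In a small ball $B\ni z$ we then have $B\cap Z^{alg}=B\cap Z$, the sheaf $\Gr(\mathcal{E})^{\ast\ast}$ is locally free, and --- after the gauge change of the removable singularities theorem --- both $A_\infty$ and the admissible metric are smooth on $B$. Choosing a transverse slice $\Sigma\subset B$ at $z$, the $3$-sphere $\partial\Sigma$ is disjoint from $Z$ by transversality and, after shrinking $B$, from $Z^{an}$ as well, so $A_{t_i}\to A_\infty$ smoothly on a neighborhood of $\partial\Sigma$.

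For the slicing comparison I would use that along the Yang--Mills flow $A_{t_i}$ is the Chern connection of a smooth metric $h_{t_i}$ on the fixed holomorphic bundle $\mathcal{E}$, so that $ch_2(\mathcal{E},h_{t_i})-ch_2(\mathcal{E},h_0)=dd^c\psi_i$ for globally smooth $(1,1)$-forms $\psi_i$ (by the $dd^c$-lemma, both sides representing $ch_2(\mathcal{E})$). Subtracting $dd^c\psi_i$ from Theorem \ref{thm:bottchern} restricted to $B$, where only $Z$ contributes, gives the equation of currents
\[
-\frac{1}{8\pi^{2}}\bigl(\Tr(F_{A_\infty}\wedge F_{A_\infty})-\Tr(F_{A_{t_i}}\wedge F_{A_{t_i}})\bigr)=m_Z^{alg}\,[Z\cap B]+dd^{c}(\Psi-\psi_i)
\]
on $B$, with a smooth closed $(2,2)$-form on the left, so \eqref{eqn:transverse} yields
\[
m_Z^{alg}=\frac{1}{8\pi^{2}}\int_{\Sigma}\bigl\{\Tr(F_{A_{t_i}}\wedge F_{A_{t_i}})-\Tr(F_{A_\infty}\wedge F_{A_\infty})\bigr\}-\int_{\partial\Sigma}d^{c}(\Psi-\psi_i)
\]
for each $i$. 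By Lemma \ref{lem:slice}, once we know $Z\subset Z_b^{an}$, the integral over $\Sigma$ converges to $m_Z^{an}$ as $i\to\infty$, so the task reduces to passing to the limit in this identity and establishing that containment.

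To close the argument I would bring in the global input: by Theorem \ref{thm:dws} the Uhlenbeck limit $A_\infty$ is the Bando--Siu connection on $\Gr(\mathcal{E})^{\ast\ast}$, so Theorem \ref{thm:tian}, Proposition \ref{prop:ch2} and Theorem \ref{thm:bottchern} give that the effective codimension $2$ cycles $\sum_j m_j^{an}Z_j^{an}$ and $\sum_k m_k^{alg}Z_k^{alg}$ are cohomologous (Corollary \ref{cor:cohomologous}). Once the limits in the slice identity are justified, the slice comparison shows $m_Z^{alg}=m_Z^{an}$ wherever both are defined and, more importantly, that every codimension $2$ component of $Z^{alg}$ must occur in $Z_b^{an}$, so that the difference of the two cycles is an \emph{effective} cycle of pure codimension $2$ (recall $Z_b^{an}$ is pure of codimension $2$ by Theorem \ref{thm:tian}) that is cohomologous to zero. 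Pairing with $[\omega]^{n-2}$, which is strictly positive on every nonzero effective cycle since $X$ is compact Kähler, forces the two cycles to coincide. This simultaneously yields $Z\subset Z_b^{an}$ with $m_Z^{alg}=m_Z^{an}$ for every codimension $2$ component $Z$ of $Z^{alg}$ and --- because $Z_b^{an}$ has pure codimension $2$ --- the inclusion $Z_b^{an}\subset Z^{alg}$.

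The step I expect to be the main obstacle is exactly the interchange of limits in the slice identity. In isolation the transverse slice formula is essentially King's formula and merely returns the multiplicity that is fed into it, so the real content is to show that $\tfrac{1}{8\pi^{2}}\int_{\Sigma}\{\Tr(F_{A_{t_i}}\wedge F_{A_{t_i}})-\Tr(F_{A_\infty}\wedge F_{A_\infty})\}$ and the boundary term $\int_{\partial\Sigma}d^{c}(\Psi-\psi_i)$ have the expected limits and to pin down the limiting numerical identity. I would handle this as in the proof of Lemma \ref{lem:slice} --- smooth convergence off $Z^{an}$ near $\partial\Sigma$, Chern--Simons transgressions, and the monotonicity and dominated-convergence estimates already in play --- supplemented by an a priori inequality $m_Z^{an}\geq m_Z^{alg}$ (the energy that bubbles off along $Z$ is no smaller than the algebraic defect, hence in particular $Z$ must lie in $Z_b^{an}$), which comes from the monotonicity formula and the fact that bubbles are Yang--Mills; it is precisely in this inequality, and in the statement that the topological jump along $Z$ is measured by $m_Z^{alg}$, that the identification $\mathcal{E}_\infty\cong\Gr(\mathcal{E})^{\ast\ast}$ is used in an essential way.
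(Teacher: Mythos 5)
Your overall architecture is right, and the second half of your argument (effectivity of the cycle $\sum_j m_j^{an}Z_j^{an}-\sum_k m_k^{alg}Z_k^{alg}$, Corollary \ref{cor:cohomologous}, and K\"ahler positivity forcing the two cycles to coincide, hence $Z_b^{an}\subset Z^{alg}$) is exactly what the paper does. But the core step --- the equality $m_Z^{alg}=m_Z^{an}$ --- is not actually established, and the gap is precisely the one you flag: the boundary term $\int_{\partial\Sigma}d^{c}(\Psi-\psi_i)$ in your local slice identity. On $B-Z$ one only knows $dd^{c}(\Psi-\psi_i)=\tfrac{1}{8\pi^{2}}\,dCS(A_{t_i},A_\infty)$, so $d^{c}(\Psi-\psi_i)-\tfrac{1}{8\pi^{2}}CS(A_{t_i},A_\infty)$ is a closed $3$-form on $B-Z$ (which is homotopy equivalent to $S^{3}$) whose period over $\partial\Sigma$ is an uncontrolled constant; smooth convergence of $A_{t_i}$ near $\partial\Sigma$ kills the $CS$ term but says nothing about this period. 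In fact $\int_{\partial\Sigma}d^{c}\psi_i=\int_\Sigma\bigl(ch_2(\Ecal,h_{t_i})-ch_2(\Ecal,h_0)\bigr)$, whose limit concentrates mass on $Z^{an}\cap\Sigma$ and therefore encodes exactly the analytic multiplicity you are trying to compute --- the argument is circular as stated. The supplementary inequality $m_Z^{an}\geq m_Z^{alg}$ that you invoke to patch this is asserted, not proved; no such comparison between the Yang--Mills density and the generic rank of $\Gr(\Ecal)^{\ast\ast}/\Gr(\Ecal)$ follows from monotonicity alone, and it is essentially equivalent to the proposition.

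The paper circumvents this by making the slice global rather than local. Since $X$ is K\"ahler, $[Z]\neq 0$ in $H_{2n-4}(X,\QBbb)$, so one can choose a \emph{closed} oriented $4$-dimensional submanifold $\Sigma$ with $[\Sigma]\cdot[Z]\neq 0$ meeting $Z^{alg}$ transversely in smooth points of its codimension $2$ part. Because $\Sigma$ has no boundary, the offending term $-\int_{\Sigma\cap\partial(\cup_k B_\sigma(z_k))}d^{c}\widehat\Psi$ equals $\int_{\widehat\Sigma\cap(\cup_k B_\sigma(z_k))^{c}}dd^{c}\widehat\Psi$, and on that exterior region $dd^{c}\widehat\Psi=dCS(\pi^\ast A_i,\widehat A_\infty)$ is smooth, so Stokes converts it into $-\int_{\Sigma\cap\partial(\cup_k B_\sigma(z_k))}CS(A_i,A_\infty)$, which genuinely vanishes as $i\to\infty$. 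Making the exterior part of $\widehat\Sigma$ miss $\sing\Gr(\pi^\ast\Ecal)$, and ensuring only the component $Z$ contributes to the cycle term, requires the separating modification of Lemma \ref{lem:geometry}, which your proposal omits entirely. So the two missing ideas are: the closed representative $\Sigma$ with nonzero intersection number replacing the local slice with boundary, and Lemma \ref{lem:geometry}.
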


To isolate contributions from individual components, we will first need an
argument similar to the one used in \cite[Lemma 6]{DaskalWentworth07}.

\begin{lemma}
\label{lem:geometry} Let $Z\subset Z^{alg}$ be an irreducible codimension $2$
component. Then there exists a modification $\pi : \widehat X\to X$ with
center $\mathbf{C}$ and exceptional set $\mathbf{E}=\pi^{-1}(\mathbf{C})$,
and a filtration of $\pi^\ast\mathcal{E}$ with associated graded sheaf $\Gr%
(\pi^\ast\mathcal{E})\to\widehat X$ and singular set $\sing \Gr(\pi^\ast%
\mathcal{E})$, all with the following properties:

\begin{enumerate}
\item $\Gr(\pi^\ast\mathcal{E})\simeq\Gr(\mathcal{E})$ on $\widehat X- 
\mathbf{E}=X-\mathbf{C}$.

\item $\codim(Z\cap\mathbf{C})\geq 3$.

\item $\codim\left(\pi(\sing \Gr(\pi^\ast\mathcal{E}))-Z\right)\geq 3$.
\end{enumerate}
\end{lemma}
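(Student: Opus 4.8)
The plan is to obtain $\pi$ as an iterated blow-up whose centers are supported over the codimension-$2$ components of $Z^{alg}$ other than $Z$, and to take for the filtration of $\pi^\ast\mathcal{E}$ the saturation of the pullback of \eqref{eqn:filtration}, further corrected by elementary transformations along the exceptional divisors so that its associated graded becomes locally free over the generic part of each such component. Properties (1) and (2) are then formal, and (3) follows from the resolution together with the fact that the centers never meet the generic point of $Z$. This is the higher-dimensional analogue of \cite[Lemma 6]{DaskalWentworth07}, where $\dim_{\mathbb{C}}X=2$ and the relevant centers form a finite set of points.

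First I would fix the geometry away from a small set. Let $Z,Z_1',\dots,Z_m'$ be the codimension-$2$ irreducible components of $Z^{alg}$, and put
\[
W=\sing\Gr(\mathcal{E})^{\ast\ast}\ \cup\ \sing Z\ \cup\ \bigcup_{j}\sing Z_j'\ \cup\ \bigcup_{C\ne C'}(C\cap C')\ \cup\ \bigl(\text{components of }Z^{alg}\text{ of codimension}\ge 3\bigr),
\]
the union over distinct irreducible components $C,C'$ of $Z^{alg}$. By Proposition \ref{prop:scheja}, $Z^{alg}$ has pure codimension $2$ away from $\sing\Gr(\mathcal{E})^{\ast\ast}$, so $\codim_X W\ge 3$, and on $X-W$ the set $Z^{alg}$ is a disjoint union of smooth codimension-$2$ submanifolds $Z,Z_1',\dots,Z_m'$; moreover $\Gr(\mathcal{E})^{\ast\ast}$ is locally free on $X-W$ and $\Gr(\mathcal{E})$ is locally free on $X-(W\cup Z\cup\bigcup_j Z_j')$.

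Next comes the resolution step. Near a generic point of $Z_j'$ the failure of local freeness of $\Gr(\mathcal{E})$ is concentrated along the smooth submanifold $Z_j'$, so by the standard procedure for making a coherent sheaf locally free by blowing up (Hironaka's flattening theorem) there is a finite sequence of blow-ups of $X$ with centers contained in $Z_j'$ (the first supported on $Z_j'$, each later one in the exceptional divisor produced by its predecessor), after which the strict transform of $\Gr(\mathcal{E})$ is locally free on a dense open subset of the preimage of $Z_j'$; correspondingly the saturated pullback of \eqref{eqn:filtration}, adjusted by elementary modifications along these exceptional divisors, has locally free associated graded there. Every center used lies over $Z_j'$, hence inside $\bigcup_{C\ne Z}C$, and in particular misses the generic point of $Z$. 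Carrying this out for all $j$ --- the constructions are independent over $X-W$ since the $Z_j'$ are pairwise disjoint and disjoint from $Z$ there --- and composing, I obtain a modification $\pi:\widehat X\to X$ with center $\mathbf{C}=\bigcup_j Z_j'$, exceptional set $\mathbf{E}=\pi^{-1}(\mathbf{C})$, and a filtration of $\pi^\ast\mathcal{E}$ by saturated subsheaves coinciding with the pullback of \eqref{eqn:filtration} off $\mathbf{E}$; write $\Gr(\pi^\ast\mathcal{E})$ for its associated graded. Then (1) holds because $\pi$ is biholomorphic over $X-\mathbf{C}$ and each $\mathcal{E}_i$ is already saturated in $\mathcal{E}$; (2) holds because $\mathbf{C}$ does not contain the irreducible codimension-$2$ variety $Z$, so $Z\cap\mathbf{C}$ is a proper subvariety of $Z$, of codimension $\ge 3$ in $X$; and for (3), on $X-\mathbf{C}$ the graded coincides with $\Gr(\mathcal{E})$, so $\pi(\sing\Gr(\pi^\ast\mathcal{E}))\subseteq\mathbf{C}\cup(Z^{alg}-\mathbf{C})\subseteq\mathbf{C}\cup Z\cup(\text{codim}\ge 3\ \text{comps})$, while over each $Z_j'\subseteq\mathbf{C}$ the graded is locally free on a dense open set, so $\pi(\sing\Gr(\pi^\ast\mathcal{E}))\cap\mathbf{C}$ has codimension $\ge 3$; hence $\pi(\sing\Gr(\pi^\ast\mathcal{E}))-Z$ has codimension $\ge 3$ in $X$.

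The main obstacle is the resolution step in exactly the form needed: one must resolve the singularities of the associated graded (equivalently, perform the elementary modifications of the pulled-back flag) only over the codimension-$2$ components other than $Z$, keeping the blow-up center away from the generic point of $Z$, and then check that the resulting modification of $\pi^\ast\mathcal{E}$ is genuinely a filtration by saturated subsheaves agreeing with \eqref{eqn:filtration} off $\mathbf{E}$. The remaining ingredients --- purity of $Z^{alg}$ in codimension $2$ off $\sing\Gr(\mathcal{E})^{\ast\ast}$, disjointness of the $Z_j'$ from $Z$ and from one another off $W$, and hence the independence and gluing of the local constructions --- are routine.
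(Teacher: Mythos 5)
Your overall strategy is the same as the paper's: blow up over the codimension~$2$ components of $Z^{alg}$ other than $Z$, take the saturated pullback of the filtration, and check (i)--(iii) formally. The set-up (removing a codimension~$\ge 3$ bad set via Proposition \ref{prop:scheja}, disjointness of the components off that set) is fine. But the entire mathematical content of the lemma sits in the step you defer to ``the standard procedure for making a coherent sheaf locally free by blowing up (Hironaka's flattening theorem),'' and that appeal does not close the argument.

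Two concrete problems. First, a global flattening of $\Gr(\mathcal{E})$ has centers contained in its non--locally-free locus, which is all of $Z^{alg}$ \emph{including} $Z$; that is exactly what must be avoided. You therefore localize near generic points of $Z_j'$ and propose to glue, but flattening centers are not canonical, so the local modifications need not patch, and in any case you still must show the iterated process (blowing up $Z_j'$, then the new codimension-$2$ singular components created inside the exceptional divisors, etc.) \emph{terminates} with all centers staying over $\bigcup_j Z_j'$. Second, even granting a modification under which $\Gr(\pi^\ast\mathcal{E})$ is ``locally free on a dense open subset of $\pi^{-1}(Z_j')$,'' this does not yield (iii): $\pi^{-1}(Z_j')$ is a divisor in $\widehat X$, so its singular locus could be a codimension-$2$ subvariety of $\widehat X$ that still dominates $Z_j'$ (e.g.\ a multisection of the exceptional $\mathbb{P}^1$-fibration), in which case $\pi(\sing\Gr(\pi^\ast\mathcal{E}))-Z$ would contain all of $Z_j'$ and have codimension $2$. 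What is needed is that the codimension-$2$ part of the singular set over $Z_j'$ projects to a \emph{proper} subvariety of $Z_j'$. The paper proves precisely this: it reduces to a single-step filtration $0\to\mathcal{S}\to\mathcal{E}\to\mathcal{Q}^{\ast\ast}\to\mathcal{T}\to 0$, cuts out $Z_j'$ on a transverse slice by two germs $f_1,f_2$ in the ideal of maximal minors of $\mathcal{S}\to\mathcal{E}$ with $\dim_{\mathbb{C}}(\mathcal{O}_{\Sigma,p}/\langle f_1,f_2\rangle)$ equal to the Hilbert--Samuel multiplicity $e(\mathcal{I}_{\Sigma,p})$ (constant on generic slices), and uses the strict decrease of the intersection multiplicity of the strict transforms of the associated divisors under blow-up to bound the number of steps uniformly. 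You correctly flag this as ``the main obstacle,'' but you do not supply the argument, so the proof as written has a genuine gap at its central point.
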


\begin{proof}
By Hironaka's theorem, we may find a resolution $\widehat{X}_{1}\rightarrow X
$ of the singularities of $Z^{alg}$. Note that the center of this
modification has codimension $\geq 3$ in $X$. Let $W\subset Z^{alg}$ be a
codimension $2$ irreducible component other than $Z$, and let $\widehat{W}%
_{1}$ denote the strict transform of $W$ in $\widehat{X}_{1}$. By
assumption, $\widehat{W}_{1}$ is smooth. We are going to define a sequence
of monoidal transformations 
\begin{equation*}
\widehat{X}_{n}\longrightarrow \widehat{X}_{n-1}\longrightarrow \cdots
\longrightarrow \widehat{X}_{1}\longrightarrow X
\end{equation*}%
First, let $\pi _{2}:\widehat{X}_{2}\rightarrow X$ be the blow-up of $%
\widehat{X}_{1}$ along $\widehat{W}_{1}$, and consider the induced
filtration of $\pi _{2}^{\ast }\mathcal{E}$ by saturated subsheaves with
associated graded $\Gr(\pi _{2}^{\ast }\mathcal{E})$. After a possible
further desingularization in codimension $3$, we may assume without loss of
generality that $\sing\Gr(\pi _{2}^{\ast }\mathcal{E})$ is smooth in $%
\widehat{X}_{2}$. Moreover, any codimension $2$ component of $\sing\Gr(\pi
_{2}^{\ast }\mathcal{E})$ that contains the generic $\mathbb{P}^{1}$-fiber
of the exceptional divisor of $\widehat{X}_{2}\rightarrow \widehat{X}_{1}$
projects in $X$ to a proper subvariety of $W$; hence, up to a codimension $3$
set in $X$, we may ignore these components. Let $\widehat{W}_{2}$ denote the
union of the (other) codimension $2$ components of $\sing\Gr(\pi _{2}^{\ast }%
\mathcal{E})$ in the exceptional set of $\widehat{X}_{2}\rightarrow \widehat{%
X}_{1}$. Again, $\widehat{W}_{2}$ is smooth by assumption. Define $\widehat{X%
}_{3}$ to be the blow-up of $\widehat{X}_{2}$ along $\widehat{W}_{2}$.
Repeat this process in the same manner to obtain recursively $\widehat{X}_{k}
$, for $k$ greater than $3$. We now claim that after a finite number of
steps $n$, this process stabilizes: $\widehat{W}_{n}$ is empty and $\widehat{%
X}_{n+1}=\widehat{X}_{n}$. In other words, the part of $\sing\Gr(\pi
_{n}^{\ast }\mathcal{E})$ in the exceptional set of $\widehat{X}%
_{n}\rightarrow \widehat{X}_{1}$ projects in $X$ to a proper subvariety of $W
$. Note that this then implies $\codim\left( \pi _{n}(\sing\Gr(\pi
_{n}^{\ast }\mathcal{E}))-Z\right) \geq 3$. To prove the claim it clearly
suffices to consider the case of a single step filtration: 
\begin{equation*}
0\longrightarrow \mathcal{S}\longrightarrow \mathcal{E}\longrightarrow 
\mathcal{Q}^{\ast \ast }\longrightarrow \mathcal{T}\longrightarrow 0
\end{equation*}%
where $\mathcal{S}$ and $\mathcal{E}$ are locally free at generic points of $%
W$. Let $\mathcal{I}$ be the sheaf of ideals generated by the determinants
of $\rank\mathcal{S}\times \rank\mathcal{S}$ minors of the map $\mathcal{S}%
\rightarrow \mathcal{E}$ with respect to local trivializations of both
bundles near a point $p\in W$. The vanishing set of $\mathcal{I}$ is $W$ by
definition. If $\Sigma $ is a transverse slice to $W$ at the point $p$, let $%
\mathcal{I}_{\Sigma }$ denote the ideal sheaf in $\mathcal{O}_{\Sigma }$
generated by the restriction of the generators of $\mathcal{I}$ to $\Sigma $%
. Note that  $\Ocal_\Sigma/\mathcal{I}_{\Sigma }$ is supported precisely at $p$. By 
\cite[Thm.\ 14.14, Thm. 14.13, and Thm. 14.10]{Matsumura86}, and the fact
that the stalk $\mathcal{O}_{\Sigma ,p}$ is a Cohen-Macauley local ring,
there are germs $f_{1},f_{2}\in \mathcal{I}_{\Sigma ,p}$ so that $\dim _{%
%TCIMACRO{\U{2102} }%
%BeginExpansion
\mathbb{C}
%EndExpansion
}(\mathcal{O}_{\Sigma ,p}/<f_{1},f_{2}>)=e(\mathcal{I}_{\Sigma ,p})$, the
Hilbert-Samuel multiplicity of the ideal $\mathcal{I}_{\Sigma ,p}$, and  this latter number is constant for slices through generic smooth points
of $W$. Let $D_{1}$, $D_{2}$ be the divisors associated to $f_{1}$ and $f_{2}
$, and let $\widehat{D}_{1}$, $\widehat{D}_{2}$ be the strict transforms of $%
D_{1}$, $D_{2}$ in the blow-up of $\Sigma $ at $p$. Then the intersection
multiplicity $\langle \widehat{D}_{1},\widehat{D}_{2}\rangle $ is \emph{%
strictly} less than $\langle D_{1},D_{2}\rangle $ (cf.\ \cite[p.\ 210,
Corollary 3]{Shafarevich77}), the difference depending on the order of
vanishing of $f_{1}$ and $f_{2}$ at $p$. This means that after a fixed
number of blow-ups, depending only on $\langle D_{1},D_{2}\rangle $, $%
\widehat{D}_{1}$ and $\widehat{D}_{2}$ are disjoint. But since $D_{1}$ and $%
D_{2}$ intersect only at~$p$, the intersection multiplicity $\langle
D_{1},D_{2}\rangle $ is equal to $\dim _{%
%TCIMACRO{\U{2102} }%
%BeginExpansion
\mathbb{C}
%EndExpansion
}(\mathcal{O}_{\Sigma ,p}/<f_{1},f_{2}>)$ by definition. It follows that
after a finite number of blow-ups $\pi _{n}:\widehat{X}_{n}\rightarrow X$ as
described above, the number depending only upon the Hilbert-Samuel
multiplicity $e(\mathcal{I}_{\Sigma ,p})$ of  a generic slice, the
strict transforms of the divisors corresponding to (the extensions of) $f_{1}
$ and $f_{2}$ in $\mathcal{I}$ intersect at most in a set $\widehat{Z}_{W}$
that projects to a proper subvariety of $W$. If $\widehat{\mathcal{S}}$ is
the saturation of $\pi _{n}^{\ast }\mathcal{S}$ in $\pi _{n}^{\ast }\mathcal{%
E}$, then $\Lambda^{\rank\mathcal{S}}\widehat{\mathcal{S}}$ is the
saturation of $\pi _{n}^{\ast }\Lambda^{\rank\mathcal{S}}\mathcal{S}$, and
so $\widehat{\mathcal{S}}$ is a subbundle away from $\widehat{Z}_{W}$. This
proves the claim. The lemma now follows by carrying out the procedure above
on all codimension $2$ components of $Z^{alg}$ other than $Z$.
\end{proof}

\begin{proof}[Proof of Proposition \protect\ref{prop:multiplicities}]
Choose an irreducible codimension $2$ component $Z\subset Z^{alg}$. 
We wish to show that $m_{Z}^{alg}=m_{Z}^{an}$.  Since $m_{Z}^{alg}\neq 0$, it will follow that $Z\subset Z^{an}$.
In fact, since $\sing A_\infty$  in the decomposition \eqref{eqn:ansingset} has codimension at least $3$, 
 $Z\subset Z_{b}^{an}$. We therefore proceed to prove the equality of multiplicities.

First, since $X$ is K\"{a}hler the rational homology class of $[Z]$ is
nonzero. Therefore, there is a class in $H_{4}(X,%
%TCIMACRO{\U{211a} }%
%BeginExpansion
\mathbb{Q}
%EndExpansion
)$ whose intersection product with $[Z]$ is non-trivial. Since an integral
multiple of any class (not in top dimension) can be represented by an
embedded submanifold (see \cite{Thom}), we may in particular choose a
closed, oriented $4$-real dimensional submanifold $\Sigma \subset X$
representing a class $[\Sigma ]$ in $H_{4}(X,%
%TCIMACRO{\U{211a} }%
%BeginExpansion
\mathbb{Q}
%EndExpansion
)$, so that the intersection product $[\Sigma ]\cdot \lbrack Z]\neq 0$.
Furthermore, since $\dim Z^{alg}+\dim \Sigma =\dim X$ we may choose $\Sigma $
so that it meets $Z^{alg}$ only in the smooth points of the codimension $2$
components, and this transversely. Since the intersection
multiplicity $[\Sigma]\cdot [Z]\neq 0$, we have $\Sigma \cap
Z=\{z_{1},\ldots ,z_{p}\}$ for some finite (non-empty) set of points.
Clearly, we can assume $\Sigma $ is a (positive or negatively oriented)
transverse slice at each point of intersection with $Z^{alg}$. Let $\widehat{%
X}$ and $\Gr(\pi ^{\ast }\mathcal{E})$ be as in Lemma \ref{lem:geometry}. By
transversality and part (iii) of the lemma, we may arrange so that the
strict transform $\widehat{\Sigma }$ of $\Sigma $ is embedded and $\widehat{%
\Sigma }$ intersects $\sing\Gr(\pi ^{\ast }\mathcal{E})$ only along $\pi
^{-1}(Z)$. Choose $\sigma >0$ so that for each $k=1,\ldots ,p$,

\begin{itemize}
\item $B_{2\sigma}(z_k)\subset X-\mathbf{C}$ (by Lemma \ref{lem:geometry}
(ii))

\item $B_{2\sigma}(z_k)\cap Z^{alg}\subset Z- \sing \Gr(\mathcal{E}%
)^{\ast\ast}$
\end{itemize}

Let $A_{\infty }$ be a smooth connection on $\Gr(\mathcal{E})^{\ast \ast }$
over $\cup _{k=1}^{p}B_{2\sigma }(z_{k})$, and fix a K\"{a}hler metric on $%
\widehat{X}$. By the construction in \cite{BandoSiu94} (see Theorem \ref%
{thm:bando-siu} (i)), and noting Lemma \ref{lem:geometry} (i), we can extend 
$A_{\infty }$ to a $\hat{\omega}$-admissible connection $\widehat{A}_{\infty
}$ on $\Gr(\pi ^{\ast }\mathcal{E})^{\ast \ast }$. Given a smooth connection 
$A$ on $\mathcal{E}$, let $\pi ^{\ast }A$ denote the pull-back connection on 
$\pi ^{\ast }\mathcal{E}$. Then by Theorem \ref{thm:bottchern} we have 
\begin{equation}
\frac{1}{8\pi ^{2}}\tr(F_{\pi ^{\ast }A}\wedge F_{\pi ^{\ast }A})-\frac{1}{%
8\pi ^{2}}\tr\bigl(F_{\widehat{A}_{\infty }}\wedge F_{\widehat{A}_{\infty }}%
\bigr)=\sum \hat{m}_{j}^{alg}\widehat{W}_{j}+dd^{c}\widehat{\Psi }
\label{eqn:chernhat}
\end{equation}%
where the $\widehat{W}_{j}$ are the codimension $2$ components of $\supp%
\left( \Gr(\pi ^{\ast }\mathcal{E})^{\ast \ast }/\Gr(\pi ^{\ast }\mathcal{E}%
)\right) $. Notice that by Lemma \ref{lem:geometry} (i) and the choice of $%
\Sigma $, $\Gr(\pi ^{\ast }\mathcal{E})$ is locally free in a neighborhood
of $\widehat{\Sigma }\cap \left( \cup _{k=1}^{p}B_{2\sigma }(z_{k})\right)
^{c}$, and there is one component, $\widehat{W}_{1}$ say, such that $\pi (%
\widehat{W}_{1})=Z$, while all other components $\widehat{W}_{j}$ miss $%
\widehat{\Sigma }$. Moreover, $\hat{m}_{1}^{alg}=m_{Z}^{alg}$. Now the
difference of the Chern forms for $\widehat{A}_{\infty }$ and $\pi ^{\ast }A$
are related by a Chern-Simons class $dCS(\pi ^{\ast }A,\widehat{A}_{\infty
})=dd^{c}\widehat{\Psi }$ in a neighborhood of $\widehat{\Sigma }$ away from 
$\pi ^{-1}(Z)$ (cf.\ \eqref{eqn:cs}). We can then use this fact to obtain,
by \eqref{eqn:transverse} and \eqref{eqn:chernhat}, 
\begin{align}
([\Sigma]&\cdot [Z])m_{Z}^{alg}=\frac{1}{8\pi ^{2}}\int_{\Sigma \cap \left(
\cup _{k=1}^{p}B_{\sigma }(z_{k})\right) }\left\{ \tr(F_{A}\wedge F_{A})-\tr%
(F_{A_{\infty }}\wedge F_{A_{\infty }})\right\} -\int_{\Sigma \cap \partial
\left( \cup _{k=1}^{p}B_{\sigma }(z_{k})\right) }d^{c}\widehat{\Psi }  \notag
\\
& =\frac{1}{8\pi ^{2}}\int_{\Sigma \cap \left( \cup _{k=1}^{p}B_{\sigma
}(z_{k})\right) }\left\{ \tr(F_{A}\wedge F_{A})-\tr(F_{A_{\infty }}\wedge
F_{A_{\infty }})\right\} +\int_{\widehat{\Sigma }\cap \left( \cup
_{k=1}^{p}B_{\sigma }(z_{k})\right) ^{c}}dd^{c}\widehat{\Psi }  \notag \\
& =\frac{1}{8\pi ^{2}}\int_{\Sigma \cap \left( \cup _{k=1}^{p}B_{\sigma
}(z_{k})\right) }\left\{ \tr(F_{A}\wedge F_{A})-\tr(F_{A_{\infty }}\wedge
F_{A_{\infty }})\right\} +\int_{\Sigma \cap \left( \cup _{k=1}^{p}B_{\sigma
}(z_{k})\right) ^{c}}dCS(\pi ^{\ast }A,\widehat{A}_{\infty })  \notag \\
& =\frac{1}{8\pi ^{2}}\int_{\Sigma \cap \left( \cup _{k=1}^{p}B_{\sigma
}(z_{k})\right) }\left\{ \tr(F_{A}\wedge F_{A})-\tr(F_{A_{\infty }}\wedge
F_{A_{\infty }})\right\} -\int_{\Sigma \cap \partial \left( \cup
_{k=1}^{p}B_{\sigma }(z_{k})\right) }CS(A,A_{\infty })  \label{eqn:integral}
\end{align}%
Finally, apply the above to a sequence $A_{i}$ of connections converging to $%
A_{\infty }$ away from $Z^{an}$ as in Theorem \ref{thm:tian}. Then $%
CS(A_{i},A_{\infty })\rightarrow 0$ on $\Sigma \cap \partial B_{\sigma
}(z_{k})$ for each $k$; hence the second term in \eqref{eqn:integral}
vanishes in the limit. By Lemma \ref{lem:slice}, 
\begin{equation*}
\lim_{i\rightarrow \infty }\frac{1}{8\pi ^{2}}\int_{\Sigma \cap \left( \cup
_{k=1}^{p}B_{\sigma }(z_{k})\right) }\left\{ \tr(F_{A_{i}}\wedge F_{A_{i}})-%
\tr(F_{A_{\infty }}\wedge F_{A_{\infty }})\right\} =([\Sigma]\cdot [Z]) m_{Z}^{an}
\end{equation*}%
By \eqref{eqn:integral} we therefore obtain $([\Sigma]\cdot [Z]) m_{Z}^{alg}=([\Sigma]\cdot [Z]) m_{Z}^{an}$, and since $[\Sigma]\cdot [Z]\neq
0$ we conclude that $m_{Z}^{alg}=m_{Z}^{an}$. This is the first assertion in
the statement of Proposition \ref{prop:multiplicities}. It implies that the
cycle 
\begin{equation*}
\sum_{j}m_{j}^{an}Z_{j}^{an}-\sum_{k}m_{k}^{alg}Z_{k}^{alg}
\end{equation*}%
has nonnegative coefficients. But by Corollary \ref{cor:cohomologous}, the
corresponding current is also cohomologous to zero. Hence, all codimension $2
$ components of $Z_{b}^{an}$ must in fact be contained in $Z^{alg}$ with the
same multiplicities. As mentioned previously, by the theorem of Tian and
Harvey-Shiffman, $Z_{b}^{an}$ has pure codimension $2$, and so this proves
the second statement.
\end{proof}

Proposition \ref{prop:multiplicities} proves part (3) of Theorem \ref%
{thm:main}. For part (1),  note that by
Proposition \ref{prop:multiplicities}, the irreducible codimension $2$ components 
of $Z^{alg}$ and $Z^{an}$ coincide.
By Proposition \ref{prop:scheja}, the decomposition \eqref{eqn:ansingset}, and Theorem \ref{thm:tian}, it follows that
$
Z_b^{an}-\sing A_{\infty }=Z^{alg}-\sing\Gr(\Ecal)^{\ast\ast}
$.
 We therefore need only show   that $\sing A_{\infty }=\sing\Gr(\mathcal{E})^{\ast \ast }$. Indeed, if $p\not\in \sing\Gr(%
\mathcal{E})^{\ast \ast }$, then by definition $\Gr(\mathcal{E})^{\ast \ast }
$ is locally free in a neighborhood of $p$. It follows from \cite[Theorem 2
(c)]{BandoSiu94} that the direct sum of the admissible Hermitian-Einstein
metrics on the stable summands of $\Gr(\mathcal{E})^{\ast \ast }$ is smooth
at $p$, and hence, $p\not\in \sing A_{\infty }$. Conversely, if $p\not\in %
\sing A_{\infty }$ then the direct sum of the admissible Hermitian-Einstein
metrics extends to a smooth bundle over $p$. But since $\Gr(\mathcal{E}%
)^{\ast \ast }$ is reflexive and hence normal, this implies $p\not\in \sing%
\Gr(\mathcal{E})^{\ast \ast }$. 
Given these equalities, part (1) now follows from
the decompositions \eqref{eqn:ansingset} and \eqref{eqn:algsingset} and
 Proposition \ref{prop:scheja}.
 By Remark \ref%
{rem:intro} (ii), part (2) of Theorem \ref{thm:main} also follows. The proof
of Theorem \ref{thm:main} is complete.

\noindent \frenchspacing

\bigskip 

\par\noindent {\bf Acknowledgement.}
The first author would like to thank Julien Grivaux for answering his
questions about Levy's version of the Grothendieck-Riemann-Roch theorem.
Both authors are grateful to Bo Berndtsson for his helpful comments on an argument
in a preliminary version of this paper. The first
author also acknowledges the support of the Max Planck Institut f\"{u}r
Mathematik, where part of the present manuscript was written, and where he
was staying when it took its present form.
\bigskip

\bigskip\bigskip\bigskip
\centerline{ERRATUM}
\bigskip

Definition \ref{def:algmult} is incorrect. Instead, the definition of the algebraic multiplicity should be the following.  Let $\Fcal$ be a coherent sheaf and $\Acal{nn}(\Fcal)\subset\Ocal_X$ the annihilator subsheaf associated to $\Fcal$. Let $\Ical$ be the radical of $\Acal{nn}(\Fcal)$. The (reduced) support of $\Fcal$ is the variety $Z$ associated to $\Ical$ with structure sheaf $\Ocal_Z=\Ocal_X/\Ical$.
By the R\"uckert Nullstellensatz, there is a positive integer $N$ such that $\Ical^N\subset\Acal{nn}(\Fcal)$.  Let 
$$
\gr \Fcal=\bigoplus_{k=0}^N \Ical^k\Fcal/\Ical^{k+1}\Fcal\ .
$$
Then for an irreducible component $Z_j\subset Z$,  define $m_j^{alg}$ to be the rank of $\gr\Fcal$ as an $\Ocal_Z$ module along $Z_j$. Notice that if $\imath: Z\hookrightarrow X$ denotes the inclusion, then locally around a smooth point of $Z$, we have $\gr\Fcal\bigr|_{Z}=\imath_\ast \imath^\ast \gr \Fcal$.
The key fact used about the  $m_j^{alg}$ is that they satisfy the formula
given in Proposition \ref{prop:key}. This in turn follows from Proposition 
\ref{prop:components}.  The statement there  is
incorrect with the definition of $m_j^{alg}$ in the paper, but it is correct with the modified definition given above, as we now explain.
% Therefore definition \ref{def:algmult} is insufficient for our purposes.
% Instead of defining $m_{j}^{alg}$ by $\rank\mathcal{F}%
% |_{Z_{j}})$, one should instead consider the filtration of $\mathcal{F}$
% given by%
% \[
% 0=\mathcal{I}^{N}\mathcal{F}\subset \cdots \subset \mathcal{I}^{k+1}\mathcal{%
% F}\subset \mathcal{I}^{k}\mathcal{F}\subset \cdots \subset \mathcal{IF}%
% \subset \mathcal{F}\text{,}
% \]%
% where $\mathcal{I}$ is the ideal sheaf of the support of $\mathcal{F}$. Then
% the ideal $\mathcal{I}$ annhiliates the associated graded object%
% \[
% gr_{\mathcal{I}}(\mathcal{F)=}\bigoplus_{k}\mathcal{I}^{k}\mathcal{F}%
% /\mathcal{I}^{k+1}\mathcal{F}\text{,}
% \]%
% so $gr_{\mathcal{I}}(\mathcal{F)}$ is in particular a sheaf of modules over
% the structure sheaf of the support. Notice also that the first summand $%
% \mathcal{F}/\mathcal{IF}$ is precisely $\mathcal{F}|_{Z_{j}}$. The correct
% definition of $m_{j}^{alg}$ is given by 
% \[
% m_{j}^{alg}=\rank(gr_{\mathcal{I}}(\mathcal{F)}|_{Z_{j}}).
% \]

Since $\Fcal$ and $\gr \Fcal$ are equal in the Grothendieck group, 
% For the purpose of proving Proposition  \ref{prop:components}, it suffices
% to replace the sheaf $\mathcal{F}$ by $gr_{\mathcal{I}}(\mathcal{F)}$,
% because by the additivity of $\tau $, 
we have $\tau (\mathcal{F})=\tau (\gr\Fcal)$. This change enters in the proof of Proposition \ref{prop:components} in the following way. In case $Z$ is irreducible, the first part of the proof 
of Proposition \ref{prop:components} applies to show that
% gives
% that if $\mathcal{F}$ is a coherent sheaf on an irreducible analytic
% subvariety $Z$, then $\tau _{2\dim Z}(\mathcal{F)}=\rank(\mathcal{F}
% )[Z].$ 
% One then wants to show that the general statement follows from this.
% It is erroneously claimed that if $\mathcal{F}$ is a coherent sheaf on $X$
% with support $Z$, $\imath :Z\hookrightarrow X$ the inclusion, then $\imath
% _{\ast }\imath ^{\ast }\mathcal{F}=\mathcal{F}$. This is false in general,
% but is true for the sheaf $gr_{\mathcal{I}}(\mathcal{F)}$, since $\mathcal{I}%
% gr_{\mathcal{I}}(\mathcal{F})=0$. We therefore obtain by naturality of $\tau
% :$%
$$
\tau _{2\dim Z}(\mathcal{F}) =\tau _{2\dim Z}(\gr\Fcal)=\tau _{2\dim Z}(\imath _{\ast }\imath ^{\ast }(\gr\Fcal))
=\imath _{\ast }\tau _{2\dim Z}(\gr\Fcal\bigr|_{Z})=
\rank(\gr\Fcal\bigr|_{Z})[Z]\  .
$$
More generally,
consider the irreducible components $Z_{j}\subset 
Z$ of top dimension and their inclusions $\imath
_{j}:Z_{j}\rightarrow X$.
By the observation above, the natural map
$$
\gr\Fcal\lra  \bigoplus_j (\imath_j)_\ast\bigl(\gr\Fcal\bigr|_{Z_j}\bigr)
$$
is an isomorphism of $\Ocal_Z$-modules off a subvariety of positive codimension in $Z$.
It then follows that
\begin{align*}
\tau _{2\dim Z}(\mathcal{F)} &=\tau _{2\dim Z}(\gr\Fcal)=\tau _{2\dim Z}\bigl(\bigoplus_{j}(\imath_j)_\ast(\gr\Fcal\bigr|_{Z_j})\bigr)
=\sum_{j}(\imath _{j\ast })\bigl(\tau _{2\dim Z}(\gr\Fcal\bigr|_{Z_j})\bigr)\\
&=\sum_{j}\rank(\gr\Fcal\bigr|_{Z_{j}})[Z_{j}]=\sum_{j}m_{j}^{alg}[Z_{j}]\ ,
\end{align*}%
as claimed in Proposition \ref{prop:components}. Since this is the only
place in the paper where the definition of $m_{j}^{alg}$ is
directly used (all later instances where the $m_{j}^{alg}$ appear
refer only to the fact that they satisfy the equation of Proposition  \ref%
{prop:key}), all subsequent results in the paper are unchanged once this
modification to the original definition is made.

\bigskip 

The authors would like to thank Matei Toma for pointing out this error. 
\vfill

\end{document}